\documentclass[12pt]{article}
\usepackage[a4paper,margin=1in]{geometry}

\usepackage{amssymb,amsthm,amsmath,mathtools}
\mathtoolsset{showonlyrefs=true}

\newtheorem{theorem}{Theorem}[section]
\newtheorem{lemma}[theorem]{Lemma}
\newtheorem{corollary}[theorem]{Corollary}

\newtheorem{fact}[theorem]{Fact}
\newtheorem{proposition}[theorem]{Proposition}
\newtheorem{problem}[theorem]{Problem}

\theoremstyle{definition}
\newtheorem{definition}[theorem]{Definition}

\theoremstyle{remark}
\newtheorem{remark}[theorem]{Remark}

\usepackage{hyperref}
\usepackage[style=alphabetic,isbn=false,url=false,doi=false,giveninits=true,maxnames=9,sorting=nyt,backend=bibtex]{biblatex} 
\renewbibmacro{in:}{}
\bibliography{reference.bib}
\DeclareFieldFormat[article,periodical]{volume}{\mkbibbold{#1}}

\usepackage{enumitem}
\setlist[enumerate,1]{label=(\arabic*)}
\setlist[enumerate,2]{label=(\roman*)}

\numberwithin{equation}{section}
\numberwithin{figure}{section}
\numberwithin{table}{section}

\usepackage{multirow}

\DeclareMathOperator{\supp}{supp}
\DeclareMathOperator{\im}{Im}
\DeclareMathOperator{\re}{Re}

\title{Continuous solutions of the complex Kac--Bernstein functional equation on the integers and the real numbers}
\author{Takashi Satomi}

\begin{document}

\maketitle

\begin{abstract}

In this paper, the continuous solutions of the complex Kac--Bernstein functional equation
\begin{align}
  & \phantom{ {} = {} }  f_1 ( u + v ) f_2 ( u - v ) f_1 ( u' + v' ) f_2 ( u' - v' ) \\
  & = f_1 ( u + v' ) f_2 ( u - v' ) f_1 ( u' + v ) f_2 ( u' - v )
\end{align}
are classified for $ \mathbb{ Z } $ and $ \mathbb{ R } $.
By using this classification for $ \mathbb{ Z } $, we consider a generalization of the Kac--Bernstein theorem on the one-dimensional torus $ \mathbb{ T } $ by Baryshnikov--Eisenberg--Stadje from probability Borel measures to complex Borel measures.
Similarly, the original Kac--Bernstein theorem on $ \mathbb{ R } $ is generalized from probability Borel measures to complex Borel measures.

\end{abstract}

\noindent
\textbf{Keywords:} Complex Kac--Bernstein functional equation, Kac--Bernstein theorem, Complex Borel measures, Locally compact abelian groups.

\noindent
\textbf{MSC 2020:} 39B52 (Primary); 22B10, 28C10, 58C35, 60B15, 62E15 (Secondary).

\section{Introduction}

In this paper, the continuous solutions of the complex Kac--Bernstein functional equation are classified for $ \mathbb{ Z } $ (Theorem \ref{thm:Z-characterize}) and $ \mathbb{ R } $ (Corollary \ref{cor:R-characterize}).
Furthermore, by using these results, we study the Kac--Bernstein theorem for complex Borel measures on $ \mathbb{ T } \coloneqq \mathbb{ R } / 2 \pi \mathbb{ Z } $ (Corollary \ref{cor:complex-T}) and $ \mathbb{ R } $ (Corollary \ref{cor:Kac-Bernstein-complex-R}).
These results generalize previous classifications from probability measures to complex measures.
The probability-measure cases on $ \mathbb{ T } $ and $ \mathbb{ R } $ were established by Baryshnikov--Eisenberg--Stadje (Corollary \ref{cor:Baryshnikov-Eisenberg-Stadje}) and Kac--Bernstein (Corollary \ref{cor:original-Kac-Bernstein}), respectively.

For functions $ f_1 , f_2 \colon G \to \mathbb{ C } $ on a locally compact abelian group $ G $, the equation
\begin{align}
  & \phantom{ {} = {} }  f_1 ( u + v ) f_2 ( u - v ) f_1 ( u' + v' ) f_2 ( u' - v' ) \\
  & = f_1 ( u + v' ) f_2 ( u - v' ) f_1 ( u' + v ) f_2 ( u' - v ) \label{eq:complex-Kac-Bernstein}
\end{align}
is called the complex Kac--Bernstein functional equation ($ u , v , u' , v' \in G $).
If 
\begin{align}
    f_1 ( 0 ) = f_2 ( 0 ) = 1 \label{eq:probability-dual}
\end{align}
holds, then \eqref{eq:complex-Kac-Bernstein} is equivalent to the original Kac--Bernstein functional equation
\begin{align}
    f_1 ( u + v ) f_2 (u - v ) = f_1 ( u ) f_2 ( u ) f_1 ( v ) f_2 ( - v ) \label{eq:Kac-Bernstein-original}
\end{align}
(Proposition \ref{prop:original-Kac-Bernstein}).
The main theorem is a classification of the continuous solutions of \eqref{eq:complex-Kac-Bernstein} in the case of $ G = \mathbb{ Z } $ and $ G = \mathbb{ R } $.
When $ G = \mathbb{ Z } $, the following theorem holds:

\begin{theorem}
    \label{thm:Z-characterize}

The functions $ f_1 , f_2 \colon \mathbb{ Z } \to \mathbb{ C } $ satisfy \eqref{eq:complex-Kac-Bernstein} for any $ u , u' , v , v' \in \mathbb{ Z } $ if and only if at least one of the following conditions holds:

\begin{enumerate}
  \item \label{item:Z-characterize-f1-zero}
  
  One has $ f_1 \equiv 0 $.

  \item \label{item:Z-characterize-f2-zero}
  
  One has $ f_2 \equiv 0 $.

\item \label{item:Z-characterize-f1-one-point}

There exists $ a \in \mathbb{ Z } $ such that
\begin{align}
    \supp f_1 = \{ a \} , & & 
    \# ( \supp f_2 \cap ( a + 2 \mathbb{ Z } ) ) = 1,
\end{align}
where $ \supp f \coloneqq \{ u \in \mathbb{ Z } \mid f ( u ) \neq 0 \} $ and $ \# S $ denotes the number of elements of $ S $.

\item \label{item:Z-characterize-f2-one-point}

There exists $ a \in \mathbb{ Z } $ such that
\begin{align}
    \supp f_2 = \{ a \} , & &
    \# ( \supp f_1 \cap ( a + 2 \mathbb{ Z } ) ) = 1.
\end{align}

\item \label{item:Z-characterize-independent}

There exists $ a \in \mathbb{ Z } $ such that
\begin{align}
  \supp f_1 \subset a + 2 \mathbb{ Z }, & &
  \supp f_2 \subset a + 1 + 2 \mathbb{ Z }.
\end{align}

\item \label{item:Z-characterize-six-point}

There exist $ a_1 , a_2 , k \in \mathbb{ Z } $ with $ k , a_1 + a_2 \in 1 + 2 \mathbb{ Z } $ such that
\begin{align}
    \supp f_1 \subset \{ a_1 , a_1 \pm k \}, &  &
\supp f_2 \subset \{ a_2 , a_2 \pm k \} \label{eq:Z-characterize-six-point-support}
\end{align}
and additionally
\begin{align}
    f_1 ( a_1 )^2 f_2 ( a_2 - k ) f_2 ( a_2 + k ) = f_2 ( a_2 )^2 f_1 ( a_1 - k ) f_1 ( a_1 + k ). \label{eq:Z-characterize-six-point-equation}
\end{align}

\item \label{item:Z-characterize-Gauss}

There exist $c_1 , c_2 , c_3 , c_4 , c_5 , c_6 \in \mathbb{ C } $, $ k = 1 , 3 , 5 , \cdots $, and $ a_1 , a_2 \in \mathbb{ Z } $ with $ a_1 + a_2 \in 2 \mathbb{ Z } $ such that
\begin{align}
  f_1 ( u )
  & =
  \left\{
  \begin{aligned}
    & e^{ c_1 u^2 + c_2 ( - 1 )^u + c_3 u + c_4 } & & \text{ if $ u \in a_1 + k \mathbb{ Z } $} \\
    & 0 & & \text{ if $ u \notin a_1 + k \mathbb{ Z } $ }
  \end{aligned}
  \right. , \\
  f_2 ( u )
  & =
  \left\{
  \begin{aligned}
    & e^{ c_1 u^2 - c_2 ( - 1 )^u + c_5 u + c_6 } & & \text{ if $ u \in a_2 + k \mathbb{ Z } $} \\
    & 0 & & \text{ if $ u \notin a_2 + k \mathbb{ Z } $ }
  \end{aligned}
  \right. .
\end{align}

\end{enumerate}

\end{theorem}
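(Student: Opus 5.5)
The plan is to rewrite \eqref{eq:complex-Kac-Bernstein} as a rank-one condition. Set $F(u,v) \coloneqq f_1(u+v) f_2(u-v)$ for $u,v \in \mathbb{Z}$. Then \eqref{eq:complex-Kac-Bernstein} says exactly that the infinite matrix $(F(u,v))_{u,v}$ has every $2\times 2$ minor of the form $F(u,v)F(u',v') - F(u,v')F(u',v)$ equal to zero. So $F$ has rank at most one, i.e. $F(u,v) = g(u)h(v)$ for some $g,h$. Equivalently, the support $S \coloneqq \{(u,v) : F(u,v)\neq 0\}$ is a "combinatorial rectangle" $A\times B$, and on it $\log F$ is additively separable.

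Note that $(u+v, u-v)$ runs exactly over pairs $(x,y)$ with $x \equiv y \pmod 2$. Hence $S$ corresponds to $\{(x,y) \in \supp f_1 \times \supp f_2 : x\equiv y \pmod 2\}$. The "if" direction is a direct check for each of the seven items:
\begin{itemize}
\item In items \ref{item:Z-characterize-f1-zero}, \ref{item:Z-characterize-f2-zero} and \ref{item:Z-characterize-independent}, $F\equiv 0$.
\item In items \ref{item:Z-characterize-f1-one-point} and \ref{item:Z-characterize-f2-one-point}, $S$ is a single point.
\item In item \ref{item:Z-characterize-Gauss}, $\log f_1(u+v)+\log f_2(u-v)$ has the form $2c_1(u^2+v^2) + (\text{linear}) + c_2((-1)^{u+v}-(-1)^{u-v})$. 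The last bracket vanishes because $(-1)^{2v}=1$, so the whole expression separates in $u$ and $v$. Moreover, the support condition $u+v\in a_1+k\mathbb{Z}$, $u-v\in a_2+k\mathbb{Z}$ with $k$ odd defines a product set in $(u,v)$, since $2$ is invertible mod $k$.
\item In item \ref{item:Z-characterize-six-point}, one verifies directly that the support is a rectangle and that the unique nontrivial minor is \eqref{eq:Z-characterize-six-point-equation}.
\end{itemize}

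For the "only if" direction, first assume neither $f_i$ is identically zero and $S\neq\emptyset$; otherwise we land in items \ref{item:Z-characterize-f1-zero}, \ref{item:Z-characterize-f2-zero} or \ref{item:Z-characterize-independent}. The rectangle condition on $S$ reads: if $x,x'\in\supp f_1$ and $y,y'\in\supp f_2$ are all of the same parity, then $x+y-y'$ and $x'-y+y'$ (suitably) lie in $\supp f_1$. More precisely, from $(u,v),(u',v')\in S$ one gets $(u,v'),(u',v)\in S$. Translating, for $x,x'\in P_1$ and $y,y'\in P_2$ (the same-parity parts of the supports) we obtain
\[
\tfrac{x+y}{2}+\tfrac{x'-y'}{2} \in P_1, \qquad \tfrac{x+y}{2}-\tfrac{x'-y'}{2} \in P_2 ,
\]
and similarly with the roles exchanged. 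I would analyse these closure rules combinatorially:
\begin{itemize}
\item If $P_1$ or $P_2$ is a singleton, the rules force the other part to be a singleton of the same parity, giving items \ref{item:Z-characterize-f1-one-point} and \ref{item:Z-characterize-f2-one-point}.
\item Otherwise, let $d$ be the gcd of the differences. The rules generate arithmetic progressions, and one shows that either both parts are full cosets $a_i+k\mathbb{Z}$ with $k$ odd, or they are the bounded three-point configurations $\{a_i, a_i\pm k\}$ of item \ref{item:Z-characterize-six-point}.
\end{itemize}
Getting this dichotomy right, including the exact parity constraints, is the main obstacle. I would first try to show that any two points at distance $k$ in $P_1$, together with a point of $P_2$, generate reflections. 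A fourth point then propagates to the whole coset, while otherwise the configuration stays within three points.

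In the infinite-coset case, write $f_i=e^{\phi_i}$ on the coset, where $\phi_i$ is defined only up to $2\pi i$. The separability of $\phi_1(u+v)+\phi_2(u-v)$ modulo $2\pi i$ gives the discrete d'Alembert-type relation
\[
\phi_1(x+k)-\phi_1(x-k) = \phi_2(y+k)-\phi_2(y-k)
\]
for matched parities, with the shifts constrained appropriately. Taking second differences then shows that $\phi_1$ and $\phi_2$ are quadratic plus a $(-1)^u$ term with opposite coefficients, after absorbing the $2\pi i$ ambiguities into the constants. Matching the coefficients yields item \ref{item:Z-characterize-Gauss}. Finally, a direct minor computation in the three-point case leaves only \eqref{eq:Z-characterize-six-point-equation}.
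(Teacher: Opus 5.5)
Your rank-one reformulation is correct, and it gives a clean proof of the ``if'' direction. One small correction: in item \ref{item:Z-characterize-six-point} the support of $F(u,v)=f_1(u+v)f_2(u-v)$ need not be a full rectangle. What holds is that $F$ lives in the $2\times 2$ block $\{\frac{a_1+a_2\pm k}{2}\}\times\{\frac{a_1-a_2\pm k}{2}\}$, and the determinant of that block is exactly \eqref{eq:Z-characterize-six-point-equation}.

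The ``only if'' direction, however, has a genuine gap. The classification of the supports is the real content of the theorem, and you only assert it, as you acknowledge. The concrete claims you do make about it are also wrong:
\begin{enumerate}
\item Your closure rule does not stay inside a parity class. For $x=x'=a_1$, $y=a_2+k$, $y'=a_2-k$ with $k$ odd, the new point $\frac{x+y}{2}+\frac{x'-y'}{2}=a_1+k$ lies in $\supp f_1$ but has the \emph{opposite} parity. This cross-parity effect is exactly what produces item \ref{item:Z-characterize-six-point}.
\item It also refutes your singleton step. The indicator functions of $\{-1,0,1\}$ and $\{0,1,2\}$ solve \eqref{eq:complex-Kac-Bernstein}. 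Their even parts are $\{0\}$ and $\{0,2\}$, yet neither support is a singleton.
\item You must handle parity classes in which one part is empty and the other arbitrary, which items \ref{item:Z-characterize-f1-one-point}--\ref{item:Z-characterize-independent} allow. So ``cosets or three-point sets'' cannot be the dichotomy class by class.
\end{enumerate}

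The missing idea is a reflection lemma read off from one specific minor. Take $u,u'=\frac{a_1+a_2\pm k}{2}$ and $v,v'=\frac{a_1-a_2\pm k}{2}$; this minor is \eqref{eq:Z-characterize-six-point-equation}, valid whenever $a_1+a_2+k\in 2\mathbb{Z}$. It says: if $a_2\pm k\in\supp f_2$ and $a_1\in\supp f_1$ has the parity of $a_2+k$, then $a_1\pm k\in\supp f_1$ and $a_2\in\supp f_2$. For even $k$ this iterates to give $a_1+\frac{k}{2}\mathbb{Z}\subset\supp f_1$ and $a_2+\frac{k}{2}\mathbb{Z}\subset\supp f_2$. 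With this lemma the cases separate as follows:
\begin{enumerate}
\item If some $a_1\in\supp f_1$ has three same-parity partners in $\supp f_2$, two of them are congruent mod $4$, which supplies an even $k$. A minimal-period argument then shows both supports are exactly cosets of one common \emph{odd} modulus.
\item Exactly two partners gives item \ref{item:Z-characterize-six-point}.
\item If every same-parity intersection has at most one element, the case with one point in each of the four classes still needs a further application of \eqref{eq:complex-Kac-Bernstein}. It forces the even point of $\supp f_2$ to equal $a_2\pm k$.
\end{enumerate}
A gcd of differences alone yields none of this.

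Finally, your ``d'Alembert'' relation is false as displayed: for $f_1=f_2=e^{u^2}$ it reads $4kx=4ky$. The correct consequence is the logarithm of \eqref{eq:Z-characterize-six-point-equation}: $\phi_1(x+k)-2\phi_1(x)+\phi_1(x-k)\equiv\phi_2(y+k)-2\phi_2(y)+\phi_2(y-k) \pmod{2\pi i}$ for $x+y+k$ even. With $k=1$ on full support, this makes the second differences constant on parity classes and swapped between $\phi_1$ and $\phi_2$. That does yield item \ref{item:Z-characterize-Gauss}.
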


By using Theorem \ref{thm:Z-characterize}, the Kac--Bernstein theorem (Corollary \ref{cor:Baryshnikov-Eisenberg-Stadje}) on $ \mathbb{ T } $ proved by Baryshnikov--Eisenberg--Stadje can be generalized from probability Borel measures to complex Borel measures (Corollary \ref{cor:complex-T}).
Furthermore, by using Theorem \ref{thm:Z-characterize}, the continuous solutions of \eqref{eq:complex-Kac-Bernstein} on $ G = \mathbb{ R } $ can be classified as follows:

\begin{corollary}
  \label{cor:R-characterize}

The continuous functions $ f_1 , f_2 \colon \mathbb{ R } \to \mathbb{ C } $ satisfy \eqref{eq:complex-Kac-Bernstein} for any $ u , u' , v , v' \in \mathbb{ R } $ if and only if one of the following conditions holds:

\begin{enumerate}
  \item \label{item:R-characterize-f1-zero}
  
  One has $ f_1 \equiv 0 $.

  \item \label{item:R-characterize-f2-zero}
  
  One has $ f_2 \equiv 0 $.

\item \label{item:R-characterize-Gauss}

There exist $ d_1 , d_2 , d_3 , d_4 , d_5 \in \mathbb{ C } $ such that
\begin{align}
  f_1 ( u )
  = e^{ d_1 u^2 + d_2 u + d_3 }, & &
  f_2 ( u )
  = e^{ d_1 u^2 + d_4 u + d_5 } \label{eq:R-characterize-Gauss-equation}
\end{align}
hold for any $ u \in \mathbb{ R } $.

\end{enumerate}

\end{corollary}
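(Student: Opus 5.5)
The sufficiency direction is a direct check. If $f_1, f_2$ are as in \eqref{eq:R-characterize-Gauss-equation}, then
\begin{align}
  f_1 ( u + v ) f_2 ( u - v ) = e^{ 2 d_1 u^2 + ( d_2 + d_4 ) u + d_3 + d_5 } \cdot e^{ 2 d_1 v^2 + ( d_2 - d_4 ) v },
\end{align}
which has the form $ \varphi ( u ) \psi ( v ) $. Hence both sides of \eqref{eq:complex-Kac-Bernstein} equal $ \varphi ( u ) \psi ( v ) \varphi ( u' ) \psi ( v' ) $. The trivial cases $ f_1 \equiv 0 $ and $ f_2 \equiv 0 $ are immediate.

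For necessity, assume $ f_1 \not\equiv 0 $ and $ f_2 \not\equiv 0 $. The plan is to reduce to Theorem \ref{thm:Z-characterize} by restricting to lattices. Write $ u = s + h m $ and $ v = t + h n $ with $ h > 0 $ and $ m , n \in \mathbb{ Z } $. Then $ u + v = x_1 + h ( m + n ) $ and $ u - v = x_2 + h ( m - n ) $, where $ x_1 = s + t $ and $ x_2 = s - t $ are arbitrary reals. So for every $ h > 0 $ and $ x_1 , x_2 \in \mathbb{ R } $, the functions
\begin{align}
  g_1 ( n ) \coloneqq f_1 ( x_1 + h n ), & & g_2 ( n ) \coloneqq f_2 ( x_2 + h n )
\end{align}
satisfy \eqref{eq:complex-Kac-Bernstein} on $ \mathbb{ Z } $.

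The first key step, which I expect to be the main obstacle, is to show that $ f_1 $ and $ f_2 $ vanish nowhere. By continuity, $ \supp f_1 $ and $ \supp f_2 $ are nonempty open sets, so each contains an interval. Choose $ x_i $ in such an interval and $ h $ small enough that $ g_i ( 0 ) , g_i ( 1 ) , g_i ( 2 ) , g_i ( 3 ) \neq 0 $ for $ i = 1 , 2 $. Now run through the list in Theorem \ref{thm:Z-characterize}:
\begin{enumerate}
  \item cases \ref{item:Z-characterize-f1-zero} and \ref{item:Z-characterize-f2-zero} fail because neither $ g_i $ is identically zero;
  \item cases \ref{item:Z-characterize-f1-one-point}, \ref{item:Z-characterize-f2-one-point} and \ref{item:Z-characterize-independent} fail because each support contains two consecutive integers, hence meets both parity classes and has more than one point;
  \item case \ref{item:Z-characterize-six-point} fails because there the supports have at most three points;
  \item in case \ref{item:Z-characterize-Gauss}, consecutive integers in $ a_i + k \mathbb{ Z } $ force $ k = 1 $.
\end{enumerate}
Hence both $ g_i $ are nowhere zero on $ \mathbb{ Z } $. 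Given any $ y \in \mathbb{ R } $, choose $ x_1 $ in the interval and $ h $ small with $ ( y - x_1 ) / h \in \mathbb{ Z } $. Since $ x_1 $ may be moved slightly, such $ h $ exist arbitrarily small. This gives $ f_1 ( y ) \neq 0 $, and symmetrically $ f_2 ( y ) \neq 0 $.

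Next, take continuous logarithms $ L_i $ with $ f_i = e^{ L_i } $, which exist since $ \mathbb{ R } $ is simply connected. Set $ F ( u , v ) \coloneqq L_1 ( u + v ) + L_2 ( u - v ) $. Then \eqref{eq:complex-Kac-Bernstein} says that
\begin{align}
  F ( u , v ) + F ( u' , v' ) - F ( u , v' ) - F ( u' , v )
\end{align}
lies in $ 2 \pi i \mathbb{ Z } $. This expression is continuous in $ ( u , v , u' , v' ) $ and vanishes at $ u = u' $, so it is identically zero. Fixing $ u' = v' = 0 $ then gives $ F ( u , v ) = A ( u ) + B ( v ) $ with $ A , B $ continuous.

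Finally, I will solve the Pexider-type equation
\begin{align}
  L_1 ( u + v ) + L_2 ( u - v ) = A ( u ) + B ( v ).
\end{align}
First, all four functions may be taken smooth: convolve in $ u $ and $ v $ with a mollifier, which recovers smoothness of $ L_1 , L_2 $ up to standard bootstrapping. Then apply $ \partial_u^2 - \partial_v^2 $, which kills $ L_1 ( u + v ) $ and $ L_2 ( u - v ) $ because they are functions of $ u \pm v $ alone. This gives $ A'' ( u ) = B'' ( v ) $, so both equal a common constant. Applying $ \partial_u \partial_v $ instead gives $ L_1'' ( u + v ) = L_2'' ( u - v ) $, so this is a constant $ 2 d_1 $. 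Hence $ L_1 , L_2 $ are quadratic polynomials with the same leading coefficient $ d_1 $, which is \eqref{eq:R-characterize-Gauss-equation}.
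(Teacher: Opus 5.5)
Your proof is correct. From the nonvanishing step onward it follows a genuinely different route from the paper's.

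\textbf{The paper's route.} The paper uses Theorem~\ref{thm:Z-characterize} quantitatively. It restricts $f_1,f_2$ to the dyadic lattices $\epsilon 2^{-M}\mathbb{Z}$, all based at $0$. From Lemma~\ref{lem:supp-all} it reads off that on the even sublattices both functions are exponentials of quadratics with a common leading coefficient. It then asserts that the constants can be taken independent of $M$, and finishes by density and continuity.

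\textbf{Your route.} You use the discrete classification only qualitatively, to show that neither function vanishes anywhere. Your freedom to choose arbitrary real base points $x_1,x_2$ and step $h$, with no parity constraint as in Lemma~\ref{lem:scaling}, is what puts every $y\in\mathbb{R}$ on an admissible lattice. After that everything takes place on $\mathbb{R}$: a global continuous logarithm, connectedness to remove the $2\pi i\mathbb{Z}$ ambiguity, and an additive Pexider equation.

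\textbf{What each buys.} The paper's route is shorter and gets the quadratic form directly from Lemma~\ref{lem:supp-all}. However, its step ``the same constants work for every $M$'' implicitly requires choosing logarithms compatibly across levels. Values on $h\mathbb{Z}$ determine $d_1$ only modulo $\pi i h^{-2}$. Also, a $(-1)^u$ term on one lattice can be a quadratic in disguise: $e^{\pi i u^2/2}$ equals $1$ on even and $i$ on odd integers. Your continuous logarithm makes this bookkeeping unnecessary, at the cost of a separate regularity argument.

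\textbf{The regularity step.} This is the only vague point (``up to standard bootstrapping''), and it can be done directly without bootstrapping. In the variables $x=u+v$, $y=u-v$ the equation reads $L_1(x)+L_2(y)=A(\frac{x+y}{2})+B(\frac{x-y}{2})$. Integrate against $\phi(y)$ with $\phi\in C_c^\infty(\mathbb{R})$ and $\int\phi=1$:
\[
L_1(x) = 2\int A(w)\,\phi(2w-x)\,dw + 2\int B(w)\,\phi(x-2w)\,dw - \int L_2(y)\,\phi(y)\,dy .
\]
The right side is $C^\infty$ in $x$. Symmetrically $L_2$ is smooth, and then so are $A(u)=L_1(u)+L_2(u)-B(0)$ and $B(v)=L_1(v)+L_2(-v)-A(0)$. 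Your $\partial_u\partial_v$ computation then finishes the proof; the $\partial_u^2-\partial_v^2$ computation is not needed.
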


Corollary \ref{cor:R-characterize} gives a generalization (Corollary \ref{cor:Kac-Bernstein-complex-R}) of the Kac--Bernstein theorem (Corollary \ref{cor:original-Kac-Bernstein}) on $ \mathbb{ R } $ from probability Borel measures to complex Borel measures.

The organization of this paper is as follows.
In Section \ref{sec:main-relate}, we see that the complex Kac--Bernstein functional equation \eqref{eq:complex-Kac-Bernstein} arises naturally from a generalization of the Kac--Bernstein theorem to complex measures, and we study the Kac--Bernstein theorem on $ \mathbb{ T } $ and $ \mathbb{ R } $ by using Theorem \ref{thm:Z-characterize} and Corollary \ref{cor:R-characterize}, respectively.
In Section \ref{sec:Kac-Bernstein-probability}, we see the main theorem and the results in Section \ref{sec:main-relate} in the case of probability measures, and we compare them with known results.
In Section \ref{sec:Z-characterize-proof}, we prove Theorem \ref{thm:Z-characterize}.
In Section \ref{sec:R-characterize-proof}, we prove Corollary \ref{cor:R-characterize} by using Theorem \ref{thm:Z-characterize}.
In Section \ref{sec:in-main-relate-proof}, we give the proofs of several results stated in Sections \ref{sec:main-relate} and \ref{sec:Kac-Bernstein-probability}.

\section{Relation between the main theorem and the Kac--Bernstein theorem}
\label{sec:main-relate}

In this section, we obtain a generalization of the Kac--Bernstein theorem to complex measures by Theorem \ref{thm:Z-characterize} and Corollary \ref{cor:R-characterize}.
The proofs of the results in this section will be postponed to Section \ref{sec:in-main-relate-proof}.

The complex Kac--Bernstein functional equation \eqref{eq:complex-Kac-Bernstein} naturally arises as a condition for complex Borel measures whose sum and difference are independent (Proposition \ref{prop:Kac-Bernstein-Fourier}).
Thus, the following problem on $ X = \mathbb{ T } $ and $ X = \mathbb{ R } $ can be considered by using Theorem \ref{thm:Z-characterize} and Corollary \ref{cor:R-characterize}, respectively:

\begin{problem}
  \label{Q:Kac-Bernstein-complex}

Let $ F \colon X \times X \to X \times X $ be the map defined by $ F ( x_1 , x_2 ) \coloneqq ( x_1 + x_2 , x_1 - x_2 ) $ for a locally compact abelian group $ X $.
Classify pairs of complex Borel measures $ ( \mu_1 , \mu_2 ) $ on $ X $ whose pushforward measure $ F_* ( \mu_1 \times \mu_2 ) $ is the product measure of two complex measures on $ X $.
\end{problem}

Problem \ref{Q:Kac-Bernstein-complex} can be reformulated by using the Fourier--Stieltjes transform.
We denote the Pontryagin dual of a locally compact abelian group $ X $ by $ \hat{ X } $, and the Fourier--Stieltjes transform of a complex Borel measure $ \mu $ by $ \hat{ \mu } \colon \hat{ X } \to \mathbb{ C } $.
For instance, when $ X = \mathbb{ T } $, we have $ \hat{ X } = \mathbb{ Z } $ and
\begin{align}
    \hat{ \mu } ( u ) = \int_{ \mathbb{ T } } e^{ - i u x } d \mu ( x )
\end{align}
for any $ u \in \mathbb{ Z } $.
When $ X = \mathbb{ R } $, we have $ \hat{ X } = \mathbb{ R } $ and 
\begin{align}
    \hat{ \mu } ( u ) = \int_{ \mathbb{ R } } e^{ - 2 \pi i u x } d \mu ( x )
\end{align}
for any $ u \in \mathbb{ R } $.
By reformulating Problem \ref{Q:Kac-Bernstein-complex} by using the Fourier--Stieltjes transform, the equation \eqref{eq:complex-Kac-Bernstein} appears naturally as follows:

\begin{proposition}
\label{prop:Kac-Bernstein-Fourier}

Let $ X $ and $ F $ be as in Problem \ref{Q:Kac-Bernstein-complex}.
Then the following conditions on a pair $ ( \mu_1 , \mu_2 ) $ of complex Borel measures on $ X $ are equivalent:

\begin{enumerate}
  \item \label{item:Kac-Bernstein-Fourier-independent}

  The pair $ ( \mu_1 , \mu_2 ) $ satisfies the condition of Problem \ref{Q:Kac-Bernstein-complex}, that is, the pushforward measure $ F_* ( \mu_1 \times \mu_2 ) $ is the product measure of two complex measures.

  \item \label{item:Kac-Bernstein-Fourier-equation}

  The functions $ f_1 , f_2 \colon \hat{ X } \to \mathbb{ C } $ defined as
\begin{align}
    f_1 \coloneqq \hat{ \mu_1 }, & & f_2 \coloneqq \hat{ \mu_2 } \label{eq:f1-f2-Fourier}
\end{align}
satisfy \eqref{eq:complex-Kac-Bernstein} for any $ u , u' , v , v' \in \hat{ X } $.

\end{enumerate}
  
\end{proposition}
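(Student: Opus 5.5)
The plan is to translate everything into the Fourier--Stieltjes side, using two standard facts for complex Borel measures on a locally compact abelian group $X$. First, the uniqueness theorem: a complex Borel measure on $X$, or on $X \times X$, is determined by its Fourier--Stieltjes transform. Second, the dual of $X \times X$ is $\hat X \times \hat X$, and $\widehat{\nu_1 \times \nu_2}(u,v) = \hat{\nu_1}(u)\hat{\nu_2}(v)$. Writing characters additively, for $\nu \coloneqq F_*(\mu_1 \times \mu_2)$ one computes
\begin{align}
  \hat\nu(u,v) = \int \overline{\langle x_1+x_2 , u\rangle \langle x_1 - x_2 , v \rangle}\, d\mu_1(x_1)\, d\mu_2(x_2) = f_1(u+v)\, f_2(u-v),
\end{align}
since $\langle x_1+x_2,u\rangle\langle x_1-x_2,v\rangle = \langle x_1,u+v\rangle\langle x_2,u-v\rangle$. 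So condition \ref{item:Kac-Bernstein-Fourier-independent} says exactly that $g(u,v) \coloneqq f_1(u+v)f_2(u-v)$ equals $\hat{\nu_1}(u)\hat{\nu_2}(v)$ for some complex Borel measures $\nu_1,\nu_2$ on $X$.

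For \ref{item:Kac-Bernstein-Fourier-independent} $\Rightarrow$ \ref{item:Kac-Bernstein-Fourier-equation}: if $g(u,v) = h_1(u)h_2(v)$, then both sides of \eqref{eq:complex-Kac-Bernstein} equal $h_1(u)h_2(v)h_1(u')h_2(v')$, and we are done.

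For \ref{item:Kac-Bernstein-Fourier-equation} $\Rightarrow$ \ref{item:Kac-Bernstein-Fourier-independent}, equation \eqref{eq:complex-Kac-Bernstein} reads $g(u,v)g(u',v') = g(u,v')g(u',v)$, i.e.\ $g$ has ``rank at most one''. We split into two cases.
\begin{enumerate}
  \item If $g \equiv 0$, then $\nu$ has zero transform, so $\nu = 0 = 0 \times 0$ by uniqueness.
  \item Otherwise pick $(u_0,v_0)$ with $g(u_0,v_0) \neq 0$. Substituting $(u',v') = (u_0,v_0)$ gives
\begin{align}
  g(u,v) = \frac{g(u,v_0)\, g(u_0,v)}{g(u_0,v_0)}.
\end{align}
  It remains to realize $u \mapsto g(u,v_0)$ and $v \mapsto g(u_0,v)$ as transforms of measures on $X$; the constant $1/g(u_0,v_0)$ is then absorbed into one factor.
\end{enumerate}

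The main step is this realization, and it is done by pushing forward $\nu$ along twisted projections. Define $\nu_1 \coloneqq (p_1)_*(\overline{\langle y_2, v_0\rangle}\, d\nu(y_1,y_2))$, where $p_1(y_1,y_2) = y_1$. This is a complex Borel measure on $X$ with total variation at most $\|\nu\|$. Its transform is
\begin{align}
  \hat{\nu_1}(u) = \int \overline{\langle y_1,u\rangle\langle y_2,v_0\rangle}\, d\nu = \hat\nu(u,v_0) = g(u,v_0).
\end{align}
Similarly $\nu_2 \coloneqq (p_2)_*(\overline{\langle y_1,u_0\rangle}\, d\nu)$ has $\hat{\nu_2}(v) = g(u_0,v)$.

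With these, $\widehat{\nu_1\times\nu_2}/g(u_0,v_0) = \hat\nu$, so uniqueness gives $F_*(\mu_1\times\mu_2) = \bigl(g(u_0,v_0)^{-1}\nu_1\bigr)\times \nu_2$, which is condition \ref{item:Kac-Bernstein-Fourier-independent}.

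The only point that needs care is measure-theoretic. We need the Fourier uniqueness theorem for complex (not just positive) Borel measures on $X \times X$, which holds for regular complex measures on a locally compact abelian group, plus Fubini for $|\mu_1|\times|\mu_2|$. For $X = \mathbb{T}$ and $X = \mathbb{R}$, which are second countable, all finite Borel measures are regular, so these apply directly.
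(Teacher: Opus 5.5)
Your proof is correct and follows the same route as the paper. Both compute $(F_*(\mu_1\times\mu_2))^{\wedge}(u,v)=\hat{\mu_1}(u+v)\,\hat{\mu_2}(u-v)$ and reduce condition \ref{item:Kac-Bernstein-Fourier-independent} to the rank-one identity for this transform. The difference is that the paper simply asserts that this identity is equivalent to $F_*(\mu_1\times\mu_2)$ being a product measure. You actually justify the nontrivial direction: you realize $u\mapsto\hat\nu(u,v_0)$ and $v\mapsto\hat\nu(u_0,v)$ as Fourier--Stieltjes transforms of twisted marginals and then invoke Fourier uniqueness, which fills in a step the paper leaves implicit.
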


Let us consider the case of $ X = \mathbb{ T } $.
If $ \mu_1 $ and $ \mu_2 $ satisfy the equivalent conditions of Proposition \ref{prop:Kac-Bernstein-Fourier}, then, by Theorem \ref{thm:Z-characterize}, $ f_1 $ and $ f_2 $ satisfy at least one of the conditions of that theorem.
Thus, Theorem \ref{thm:Z-characterize} is useful to consider Problem \ref{Q:Kac-Bernstein-complex}.
For this purpose, we introduce some notation.

\begin{definition}

Let $ X $ be a locally compact abelian group.

\begin{enumerate}
\item

The Dirac measure of $ x \in X $ is written as $ \delta_x $.

\item

When $ X $ is compact, the normalized Haar measure on $ X $ is written as $ dm_X $.

\item 

When $ X = \mathbb{ T } $, we denote by $ M_+ $ and $ M_- $ the sets of all complex Borel measures $ \mu $ on $ \mathbb{T} $ satisfying $ \mu * \delta_\pi = \mu $ and $ \mu * \delta_\pi = - \mu $, respectively.

\item

The upper half plane is written as $ \mathbb{ H } \coloneqq \{ \tau \in \mathbb{ C } \mid \im \tau > 0 \} $.
The Jacobi theta function $ \vartheta \colon \mathbb{ C } \times \mathbb{ H } \to \mathbb{ C } $ is defined as
\begin{align}
    \vartheta ( z , \tau ) \coloneqq \sum_{ u = - \infty }^{ \infty } e^{ \pi i u^2 \tau + 2 \pi i u z }
\end{align}
for $ z \in \mathbb{ C } $ and $ \tau \in \mathbb{ H } $.

\end{enumerate}
    
\end{definition}

In the case of $ X = \mathbb{ T } $, the measures which satisfy the condition of Problem \ref{Q:Kac-Bernstein-complex} can be classified by using Theorem \ref{thm:Z-characterize} and Proposition \ref{prop:Kac-Bernstein-Fourier} as follows:

\begin{corollary}
      \label{cor:complex-T}

A pair $ ( \mu_1 , \mu_2 ) $ of complex Borel measures on $ X = \mathbb{ T } $ satisfies the condition of Problem \ref{Q:Kac-Bernstein-complex} if and only if at least one of the following conditions holds:

\begin{enumerate}
  \item \label{item:complex-T-f1-zero}
  
  One has $ \mu_1 = 0 $.

  \item \label{item:complex-T-f2-zero}
  
  One has $ \mu_2 = 0 $.

\item \label{item:complex-T-f1-one-point}

There exist $ a_1 , a_2 \in \mathbb{ Z } $ with $ a_1 + a_2 \in 2 \mathbb{ Z } $, $ \nu \in M_- $, and $ A_1 , A_2 \in \mathbb{ C } $ such that
\begin{align}
    \mu_1 & = A_1 e^{ i a_1 x } dm_{ \mathbb{ T } } (x), & \mu_2 = e^{ i a_2 x } ( A_2 dm_{ \mathbb{ T } } ( x ) + \nu ).
\end{align}

\item \label{item:complex-T-f2-one-point}

There exist $ a_1 , a_2 \in \mathbb{ Z } $ with $ a_1 + a_2 \in 2 \mathbb{ Z } $, $ \nu \in M_- $, and $ A_1 , A_2 \in \mathbb{ C } $ such that
\begin{align}
    \mu_1 & = e^{ i a_1 x } ( A_1 dm_{ \mathbb{ T } } (x) + \nu ), & \mu_2 = A_2 e^{ i a_2 x } dm_{ \mathbb{ T } } ( x ).
\end{align}

\item \label{item:complex-T-independent-1-even}
One has $ \mu_1 \in M_+ $ and $ \mu_2 \in M_- $.

\item \label{item:complex-T-independent-1-odd}
One has $ \mu_1 \in M_- $ and $ \mu_2 \in M_+ $.

\item \label{item:complex-T-six-point}

There exist $ a_1 , a_2 , k \in \mathbb{ Z } $ with $ k , a_1 + a_2 \in 1 + 2 \mathbb{ Z } $ and $ A_1 , A_2 , A_3 , A_4 , A_5 , A_6 \in \mathbb{ C } $ with $ {A_1}^2 A_5 A_6 = {A_4}^2 A_2 A_3 $ such that
\begin{align}
    \mu_1 & = ( A_1 e^{ i a_1 x } + A_2 e^{ i ( a_1 - k ) x } + A_3 e^{ i ( a_1 + k ) x } ) dm_{ \mathbb{ T } }  ( x ), \\
    \mu_2 & = ( A_4 e^{ i  a_2 x } + A_5 e^{ i ( a_2 - k ) x } + A_6 e^{ i ( a_2 + k ) x } ) dm_{ \mathbb{ T } }  ( x ).
\end{align}

\item \label{item:complex-T-Gauss}

There exist $ c_2 , c_3', c_4 , c_5', c_6 \in \mathbb{ C } $, $ \tau \in \mathbb{ H } $, $ k = 1 , 3 , 5 , \cdots $, and $ a_1 , a_2 \in \mathbb{ Z } $ with $ a_1 + a_2 \in 2 \mathbb{ Z } $ such that
\begin{align}
\mu_1 & = e^{ i a_1 x + c_4 } \left( \cosh ( c_2 ) \vartheta \left( \frac{ k x }{ 2 \pi } + c_3' , \tau \right) + \sinh ( c_2 ) \vartheta \left( \frac{ k x }{ 2 \pi } + c_3' + \frac{ 1 }{ 2 } , \tau \right) \right) dm_{ \mathbb{ T } } ( x ), \\
\mu_2 & = e^{ i a_2 x + c_6 } \left( \cosh ( c_2 ) \vartheta \left( \frac{ k x }{ 2 \pi } + c_5' , \tau \right) - \sinh ( c_2 ) \vartheta \left( \frac{ k x }{ 2 \pi } + c_5' + \frac{ 1 }{ 2 } , \tau \right) \right) dm_{ \mathbb{ T } } ( x ).
\end{align}

\item \label{item:complex-T-boundary}

There exist $ C \in \mathbb{ R } $ and a complex Borel measure $ \rho $ on $ \mathbb{ T } $ such that $ \hat{ \rho } ( u ) = e^{ i C u^2 } $ for any $ u \in \mathbb{ Z } $, and moreover there exist $ c_2 , c_4 , c_6 \in \mathbb{ C } $, $ \theta_1 , \theta_2 \in \mathbb{ T } $, $ k = 1 , 3 , 5 , \cdots $, and $ a_1 , a_2 \in \mathbb{ Z } $ with $ a_1 + a_2 \in 2 \mathbb{ Z } $ such that
\begin{align}
\mu_1 & = e^{ i a_1 x + c_4 } \left( \cosh ( c_2 ) \rho ( k x + \theta_1 ) + \sinh ( c_2 ) \rho ( k x + \theta_1 + \pi ) \right), \\
\mu_2 & = e^{ i a_2 x + c_6 } \left( \cosh ( c_2 ) \rho ( k x + \theta_2 ) - \sinh ( c_2 ) \rho ( k x + \theta_2 + \pi ) \right)
\end{align}
hold.
Here, the measure $ \rho ( k x + \theta ) $ is defined as
\begin{align}
    \int_{ \mathbb{ T } } \phi ( x ) d \rho ( k x + \theta )
    \coloneqq \frac{ 1 }{ k } \int_{ \mathbb{ T } } \sum_{ x \in \mathbb{ T }, \; y = k x + \theta } \phi ( x ) d \rho ( y ). \label{eq:complex-T-boundary-scaling}
\end{align}

\end{enumerate}

\end{corollary}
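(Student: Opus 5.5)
By Proposition \ref{prop:Kac-Bernstein-Fourier}, the pair $(\mu_1,\mu_2)$ satisfies the condition of Problem \ref{Q:Kac-Bernstein-complex} if and only if $f_1=\hat\mu_1$, $f_2=\hat\mu_2$ satisfy \eqref{eq:complex-Kac-Bernstein} on $\mathbb{Z}$. Theorem \ref{thm:Z-characterize} lists all such pairs, and a measure on $\mathbb{T}$ is determined by its Fourier--Stieltjes coefficients. So the plan is to translate each case of Theorem \ref{thm:Z-characterize} into a description of measures, and, conversely, compute the transforms of the measures in each item of the corollary to see that they land in the theorem's list. The only extra point in each case is to decide which functions on $\mathbb{Z}$ actually arise as $\hat\mu$ for some complex Borel measure $\mu$.

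Most cases translate directly.
\begin{enumerate}
\item Cases \ref{item:Z-characterize-f1-zero} and \ref{item:Z-characterize-f2-zero} give items \ref{item:complex-T-f1-zero} and \ref{item:complex-T-f2-zero}.
\item Since $\widehat{\mu*\delta_\pi}(u)=(-1)^u\hat\mu(u)$, we have $\mu\in M_+$ iff $\supp\hat\mu\subset 2\mathbb{Z}$, and $\mu\in M_-$ iff $\supp\hat\mu\subset 1+2\mathbb{Z}$. Hence case \ref{item:Z-characterize-independent}, split according to the parity of $a$, is exactly items \ref{item:complex-T-independent-1-even} and \ref{item:complex-T-independent-1-odd}.
\item In case \ref{item:Z-characterize-f1-one-point}, $\hat\mu_1=A\,1_{\{a\}}$, so $\mu_1$ is a multiple of $e^{iax}\,dm_{\mathbb{T}}$, up to the sign convention of the transform. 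For $\mu_2$, the coefficients on $a+2\mathbb{Z}$ vanish except at one point $a_2$. Multiplying $\mu_2$ by the inverse character moves $a_2$ to $0$. The condition then says: the part of the measure supported on even frequencies is a single constant, while the odd part is arbitrary. This is exactly $A_2\,dm_{\mathbb{T}}+\nu$ with $\nu\in M_-$, using the decomposition $\mu=\frac12(\mu+\mu*\delta_\pi)+\frac12(\mu-\mu*\delta_\pi)$.
\item Case \ref{item:Z-characterize-f2-one-point} is symmetric and gives item \ref{item:complex-T-f2-one-point}.
\item Case \ref{item:Z-characterize-six-point} consists of trigonometric polynomials, so it gives item \ref{item:complex-T-six-point} directly. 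One must match $A_1,\dots,A_6$ to the values $f_1(a_1),f_1(a_1\mp k),f_2(a_2),f_2(a_2\mp k)$ and check the parity bookkeeping; relation \eqref{eq:Z-characterize-six-point-equation} becomes ${A_1}^2A_5A_6={A_4}^2A_2A_3$.
\end{enumerate}

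The main obstacle is case \ref{item:Z-characterize-Gauss}. There we must decide when $u\mapsto e^{c_1u^2+c_2(-1)^u+c_3u+c_4}$ on $a_1+k\mathbb{Z}$, extended by zero, is a Fourier--Stieltjes transform. The plan has four steps.
\begin{enumerate}
\item Write $u=a_1+km$ and use $k$ odd, so that $(-1)^u=\pm(-1)^m$. Then write $e^{c_2(-1)^m}=\cosh c_2+(-1)^m\sinh c_2$. This reduces the question to whether $m\mapsto e^{c_1' m^2+c_3'' m}$ is a transform on $\mathbb{Z}$; the support on $a_1+k\mathbb{Z}$ corresponds to the pullback under $x\mapsto kx$ together with the character $e^{ia_1x}$.
\item Fourier coefficients of a measure are bounded, which forces $\re c_1\le 0$. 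It also forces the growth from $c_3$ to be compatible: if $\re c_1=0$, then $\re c_3$ must vanish after absorbing terms.
\item If $\re c_1<0$, put $\pi i\tau=c_1 k^2$ (after the reparametrization), so $\tau\in\mathbb{H}$. The function is then summable and the measure has the theta density, giving item \ref{item:complex-T-Gauss}. The factor $(-1)^m$ becomes the shift $z\mapsto z+\frac12$, and $\mu_2$ gets $-c_2$, producing the minus sign.
\item If $\re c_1=0$, the coefficient sequence is $e^{iCm^2}$ times a unimodular character. Whether this is a transform of a measure cannot be settled by an explicit formula, so we keep it as the hypothesis on $\rho$ in item \ref{item:complex-T-boundary}. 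The linear term $e^{i\theta m}$ corresponds to the translation $\theta_j$, and $\re c_3=0$ is forced by boundedness.
\end{enumerate}
Care is needed with the scaling normalization \eqref{eq:complex-T-boundary-scaling}. We would check that the measure $\rho(kx+\theta)$ has transform equal to $\hat\rho(m)e^{i m\theta}$ at $u=km$, and zero off $k\mathbb{Z}$; the factor $1/k$ is there so that the normalization comes out right. The converse direction in every case follows by computing these transforms and invoking Theorem \ref{thm:Z-characterize} together with Proposition \ref{prop:Kac-Bernstein-Fourier}.
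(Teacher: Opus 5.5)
Your proposal is correct and follows the same route as the paper. Proposition \ref{prop:Kac-Bernstein-Fourier}, uniqueness of Fourier--Stieltjes coefficients and the parity description of $M_\pm$ handle every case of Theorem \ref{thm:Z-characterize} except \ref{item:Z-characterize-Gauss}. That case is reduced via Lemma \ref{lem:key-complex-T} to coefficients $e^{c_1' m^2 + c_3'' m}$ and split by boundedness into $\re c_1 < 0$ (theta density, item \ref{item:complex-T-Gauss}) and $\re c_1 = 0$ (the measure $\rho$, item \ref{item:complex-T-boundary}). One step is worth writing out, which the paper also leaves implicit: in the forward direction, $\rho$ is obtained from $\mu_1$ by pushing $e^{-i a_1 x}\mu_1$ forward under $x \mapsto kx$, dividing out $e^{c_2'(-1)^m + c_4'}$ (via $e^{-c_2'(-1)^m} = \cosh c_2' - (-1)^m \sinh c_2'$, a combination of $\sigma$ and $\sigma * \delta_\pi$), and translating, which shows that $\rho$ is indeed a complex Borel measure.
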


\begin{remark}

In the case of Corollary \ref{cor:complex-T} \ref{item:complex-T-boundary}, there exists $ \rho $ satisfying the condition for any $ C \in \mathbb{ Q } \pi $ because $ e^{ i C u^2 } $ is periodic.
On the other hand, when $ C \notin \mathbb{ Q } \pi $, the author does not know whether there exists $ \rho $ satisfying the condition.

\end{remark}

By Theorem \ref{thm:Z-characterize} and Proposition \ref{prop:Kac-Bernstein-Fourier}, the Fourier--Stieltjes transforms $ f_1 $ and $ f_2 $ of $ \mu_1 $ and $ \mu_2 $, respectively, in Corollary \ref{cor:complex-T} satisfy at least one of the conditions of Theorem \ref{thm:Z-characterize}.
More precisely, the conditions of Theorem \ref{thm:Z-characterize} for $ f_1 $ and $ f_2 $ correspond to the conditions of Corollary \ref{cor:complex-T}, as shown in Table \ref{tab:Z-characterize-complex-T-correspond}.
Corollaries \ref{cor:original-Kac-Bernstein-Z} and \ref{cor:Baryshnikov-Eisenberg-Stadje} will be explained later in Subsection \ref{subsec:Kac-Bernstein-probability-Z}.

\begin{table}[ht]
            \centering
            \caption{Correspondence table between Theorem \ref{thm:Z-characterize} and several related corollaries (``N'' denotes no corresponding condition)}
            \label{tab:Z-characterize-complex-T-correspond}

\vspace{8pt}
            
\begin{tabular}{c||c|c|c|c|c|c|c|c|c|c}
Theorem \ref{thm:Z-characterize} & \ref{item:Z-characterize-f1-zero} & \ref{item:Z-characterize-f2-zero} & \ref{item:Z-characterize-f1-one-point} & \ref{item:Z-characterize-f2-one-point} & \multicolumn{2}{c|}{\ref{item:Z-characterize-independent}} & \multicolumn{2}{c|}{\ref{item:Z-characterize-six-point}} & \multicolumn{2}{c}{\ref{item:Z-characterize-Gauss}} \\
\hline
Corollary \ref{cor:complex-T} & \ref{item:complex-T-f1-zero} & \ref{item:complex-T-f2-zero} & \ref{item:complex-T-f1-one-point} & \ref{item:complex-T-f2-one-point} & \ref{item:complex-T-independent-1-even} & \ref{item:complex-T-independent-1-odd} & \multicolumn{2}{c|}{\ref{item:complex-T-six-point}} & \ref{item:complex-T-Gauss} & \ref{item:complex-T-boundary} \\
\hline
Corollary \ref{cor:original-Kac-Bernstein-Z} & \multicolumn{2}{c|}{\multirow{2}{*}{N}} & \ref{item:original-Kac-Bernstein-Z-f1-one-point} & \ref{item:original-Kac-Bernstein-Z-f2-one-point} & \multicolumn{2}{c|}{\multirow{2}{*}{N}} & \ref{item:original-Kac-Bernstein-Z-six-point-f1-symmetry} & \ref{item:original-Kac-Bernstein-Z-six-point-f2-symmetry} & \multicolumn{2}{c}{\ref{item:original-Kac-Bernstein-Z-Gauss}} \\
\cline{1-1} \cline{4-5} \cline{8-11}
Corollary \ref{cor:Baryshnikov-Eisenberg-Stadje} & \multicolumn{2}{c|}{} & \ref{item:Baryshnikov-Eisenberg-Stadje-f1-one-point} & \ref{item:Baryshnikov-Eisenberg-Stadje-f2-one-point} & \multicolumn{2}{c|}{} & \multicolumn{2}{c|}{N} & \ref{item:Baryshnikov-Eisenberg-Stadje-Gauss} & \ref{item:Baryshnikov-Eisenberg-Stadje-Zk}
\end{tabular}
\end{table}

Similarly, the solution of Problem \ref{Q:Kac-Bernstein-complex} for $ X = \mathbb{ R } $ follows from Corollary \ref{cor:R-characterize} and Proposition \ref{prop:Kac-Bernstein-Fourier}:

\begin{corollary}
    \label{cor:Kac-Bernstein-complex-R}

A pair $ ( \mu_1 , \mu_2 ) $ of complex Borel measures on $ X = \mathbb{ R } $ satisfies the condition of Problem \ref{Q:Kac-Bernstein-complex} if and only if at least one of the following conditions holds.

\begin{enumerate}
  \item \label{item:Kac-Bernstein-complex-R-f1-zero}
  
  One has $ \mu_1 = 0 $.

  \item \label{item:Kac-Bernstein-complex-R-f2-zero}
  
  One has $ \mu_2 = 0 $.

\item \label{item:Kac-Bernstein-complex-R-Gauss}

There exist $ c_1 , c_2 , c_3 , c_4 , c_5 \in \mathbb{ C } $ with $ \re c_1 < 0 $ such that
\begin{align}
  \mu_1 & = e^{ c_1 x^2 + c_2 x + c_3 } dx, &
  \mu_2 & = e^{ c_1 x^2 + c_4 x + c_5 } dx
\end{align}
hold, where $ dx $ is the Lebesgue measure on $ \mathbb{ R } $.

\item \label{item:Kac-Bernstein-complex-R-degenerate}

There exist $ c_1 , c_2 \in \mathbb{ C } $ and $ t_1 , t_2 \in \mathbb{ R } $ such that
\begin{align}
  \mu_1 & = c_1 \delta_{ t_1 }, &
  \mu_2 & = c_2 \delta_{ t_2 }.
\end{align}

\end{enumerate}
    
\end{corollary}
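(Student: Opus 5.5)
The plan is to combine Proposition \ref{prop:Kac-Bernstein-Fourier} with Corollary \ref{cor:R-characterize}, and then identify which complex Borel measures on $\mathbb{R}$ have the resulting Fourier--Stieltjes transforms. Let $(\mu_1,\mu_2)$ satisfy the condition of Problem \ref{Q:Kac-Bernstein-complex}. Then $f_j \coloneqq \hat{\mu_j}$ are bounded continuous functions on $\mathbb{R}$ solving \eqref{eq:complex-Kac-Bernstein}. By Corollary \ref{cor:R-characterize}, either some $f_j\equiv 0$, or
\begin{align}
f_1(u)=e^{d_1u^2+d_2u+d_3}, & & f_2(u)=e^{d_1u^2+d_4u+d_5}.
\end{align}
If some $f_j \equiv 0$, Fourier uniqueness gives $\mu_j = 0$, which is condition \ref{item:Kac-Bernstein-complex-R-f1-zero} or \ref{item:Kac-Bernstein-complex-R-f2-zero}.

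In the Gaussian case, the first step is to use boundedness. Since $|f_1(u)|=e^{\re(d_1)u^2+\re(d_2)u+\re d_3}\le\|\mu_1\|$ for all real $u$, we get $\re d_1\le 0$, and if $\re d_1=0$ then also $\re d_2=0$. The same holds for $d_4$.

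\textbf{Case $\re d_1<0$.} Then $f_j$ is a Schwartz function, so $\mu_j$ is absolutely continuous with density given by the inverse Fourier transform. Completing the square computes this density as $e^{c_1x^2+c_2x+c_3}$, with $c_1=\pi^2/d_1$ and hence $\re c_1<0$. Both densities share the same $c_1$ because $d_1$ is common to $f_1$ and $f_2$. This gives \ref{item:Kac-Bernstein-complex-R-Gauss}.

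\textbf{Case $\re d_1=0$.} This is the main obstacle. Write $d_1=i\beta$ with $\beta\in\mathbb{R}$; here $|f_j|$ is constant. I must show $\beta=0$, i.e.\ that $u\mapsto e^{i\beta u^2+i\gamma u}$ with $\beta\neq0$ is not the Fourier--Stieltjes transform of a finite measure. I would argue as follows.
\begin{enumerate}
\item By Wiener's theorem, $\lim_{T\to\infty}\frac1{2T}\int_{-T}^T|\hat\mu(u)|^2\,du=\sum_x|\mu(\{x\})|^2$. Since $|\hat\mu|$ is constant and nonzero, $\mu$ has an atomic part.
\item Decompose $\mu=\mu_a+\mu_c$. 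Then $\hat\mu_a$ is almost periodic and $\frac1{2T}\int_{-T}^T|\hat\mu_c|^2\to0$.
\item On the other hand, the chirp $e^{i\beta u^2}$ has zero mean against every character $e^{-iux}$. Hence
\begin{align}
\lim_{T\to\infty}\frac1{2T}\int_{-T}^T\hat\mu(u)\overline{\hat\mu_a(u)}\,du=0,
\end{align}
while the same limit equals $\sum_x|\mu(\{x\})|^2>0$ by step 2. This is a contradiction.
\end{enumerate}
So $\beta=0$, and $\hat\mu_j=e^{i\gamma_j u+d'}$ with $\gamma_j$ real. This forces $\mu_j=c_j\delta_{t_j}$, giving \ref{item:Kac-Bernstein-complex-R-degenerate}.

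For the converse, each listed pair has transforms of the form appearing in Corollary \ref{cor:R-characterize}. Zero measures give zero transforms. Gaussian densities with common $c_1$ give transforms of the form \eqref{eq:R-characterize-Gauss-equation} with a common $d_1$. Dirac masses give $d_1=0$. Applying Proposition \ref{prop:Kac-Bernstein-Fourier} in reverse then shows that each such pair satisfies the condition of Problem \ref{Q:Kac-Bernstein-complex}.
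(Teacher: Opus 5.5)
Your proof is correct. Its skeleton matches the paper's: Proposition \ref{prop:Kac-Bernstein-Fourier} plus Corollary \ref{cor:R-characterize}, then boundedness giving $\re d_1\le 0$, then point masses when $d_1=0$. The genuine difference is in how you exclude the chirp case $\re d_1=0$, $d_1\neq 0$.

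The paper treats all $d_1\neq 0$ at once. It writes $\mu_1$ as the inverse transform $\exp\bigl(\pi^2x^2/d_1-\pi i d_2x/d_1+\cdots\bigr)\,dx$, remarking that this is valid in the sense of tempered distributions even when $\re d_1=0$. It then concludes $\re(\pi^2/d_1)<0$ just from the fact that $\mu_1$ is a finite measure, since a density of constant modulus is not integrable.

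You instead split the cases. For $\re d_1<0$, classical $L^1$ inversion is enough. For $\re d_1=0$, $\beta\neq 0$, you use Wiener's theorem: a transform of constant nonzero modulus forces $\mu$ to have an atom. On the other hand, every Bohr coefficient $\lim_{T\to\infty}\frac1{2T}\int_{-T}^T\hat\mu(u)e^{2\pi iux}\,du$ of a chirp vanishes, because the Fresnel-type integrals are bounded in $T$. Hence $\mu$ has no atoms, a contradiction. To close this you need the cross-term estimate $\bigl|\frac1{2T}\int_{-T}^T\hat\mu_c\overline{\hat\mu_a}\,du\bigr|\to 0$, which follows from Cauchy--Schwarz and is implicit in your step 3. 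You could also drop the decomposition $\mu=\mu_a+\mu_c$: vanishing Bohr coefficients give $\mu(\{x\})=0$ for every $x$ directly, and then Wiener's theorem contradicts $|\hat\mu|\equiv\text{const}>0$.

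The paper's route is shorter and uniform in $d_1\neq 0$. However, it relies on distributional Fourier analysis of chirps. It also tacitly uses the fact that a finite measure agreeing with a continuous function as a distribution must have that function as an $L^1$ density. Your route stays entirely within the harmonic analysis of finite measures and avoids distributions. It also gives a more general obstruction: no Fourier--Stieltjes transform can have positive quadratic mean while all its Bohr coefficients vanish.

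Finally, you write out the converse direction explicitly, which the paper leaves implicit.
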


\section{Kac--Bernstein theorem for probability measures}
\label{sec:Kac-Bernstein-probability}

In the case of probability measures, the main theorem and several corollaries in Section \ref{sec:main-relate} are well studied.
In this section, we consider the restriction of Problem \ref{Q:Kac-Bernstein-complex} to probability measures and review known results on the solutions of the original Kac--Bernstein functional equation \eqref{eq:Kac-Bernstein-original}.
That is, we consider the following problem:

\begin{problem}
  \label{Q:Kac-Bernstein-probability-represent}

Classify pairs $ ( \eta_1 , \eta_2 ) $ of independent random variables on a locally compact abelian group $ X $ such that the sum $ \eta_1 + \eta_2 $ and the difference $ \eta_1 - \eta_2 $ are also independent.
That is, classify pairs $ ( \mu_1 , \mu_2 ) $, where $ \mu_1 $ and $ \mu_2 $ are the probability distributions (probability Borel measures) of $ \eta_1 $ and $ \eta_2 $, respectively, such that the condition of Problem \ref{Q:Kac-Bernstein-complex} holds.

\end{problem}

There are many works concerning Problem \ref{Q:Kac-Bernstein-probability-represent} and its generalization, known as the Darmois--Skitovich theorem \cite{MR61322} \cite{skitovitch1953property}, in the literature \cite{MR137201} \cite{MR144373} \cite{MR294997} \cite{MR1065592} \cite{MR1210054} \cite{MR1618734} \cite{MR1785533} \cite{MR1981634} \cite{MR2144871} \cite{MR2354566} \cite{MR2481995} \cite{MR3136462} \cite{MR2790886} \cite{MR3109683} \cite{MR3028778} \cite{MR3145890} \cite{MR3626791} \cite{MR3873044} (see the books \cite{MR1206473} \cite{MR2412878} \cite{MR4566375} by Feldman for details).
Table \ref{tab:Kac-Bernstein-reference} summarizes the literature describing the solutions of Problem \ref{Q:Kac-Bernstein-probability-represent}.

\begin{table}[ht]
            \centering
            \caption{Literature describing the solutions of Problem \ref{Q:Kac-Bernstein-probability-represent}}
            \label{tab:Kac-Bernstein-reference}

\vspace{8pt}
            
\begin{tabular}{c|c}
locally compact abelian group $ X $ & literature \\
\hline
$ \mathbb{ R }^n $ & Kac, Bernstein \cite{MR371}, \cite{bernstein1941property} \\
When $ X_0 \cap \ker \alpha_X = \{ 0 \} $ & Feldman (Fact \ref{fact:Feldman-no-order-2}) \\
$ \mathbb{ T } $ & Baryshnikov--Eisenberg--Stadje (Corollary \ref{cor:Baryshnikov-Eisenberg-Stadje}) \\
$ \mathbb{ R } \times \mathbb{ T } $, $ \mathbf{ a } $-adic solenoid & Myronyuk \cite[Theorems 1 and 2]{MR2214813}
\end{tabular}
\end{table}

We introduce some terminology and notation to describe Feldman's results.

\begin{definition}

Let $ X $ be a locally compact abelian group.

\begin{enumerate}

    \item 
    
A probability Borel measure $ \mu $ on $ X $ is called a symmetric Gaussian measure if there exists a map $ \psi \colon \hat{ X } \to \mathbb{ C } $ such that
\begin{align}
    \hat{ \mu } ( u ) = e^{ \psi ( u ) }, & & \psi ( u + v ) + \psi ( u - v ) = 2 ( \psi ( u ) + \psi ( v ) )
\end{align}
hold for any $ u , v \in \hat{ X } $.
The set of all symmetric Gaussian measures on $ X $ is written as $ \Gamma^s ( X ) $.

    \item 

    The map $ \alpha_X \colon X \to X $ is defined as $ \alpha_X ( x ) \coloneqq x + x $ for $ x \in X $.
    The locally compact abelian group $ X $ is called a Corwin group if $ \alpha_X ( X ) = X $.

\item

The connected component of $ X $ containing the identity element $ 0 \in X $ is written as $ X_0 $.

\end{enumerate}
    
\end{definition}

When $ X_0 \cap \ker \alpha_X = \{ 0 \} $, Feldman obtained the solution of Problem \ref{Q:Kac-Bernstein-probability-represent} as follows:

\begin{fact}[{\cite[Theorem 4]{feld1987bernstein}, \cite[Theorems 9.19 and 9.21 and Proposition 9.20]{MR1206473}}]
    \label{fact:Feldman-no-order-2}

The following conditions on a locally compact abelian group $ X $ are all equivalent:

\begin{enumerate}
    \item \label{item:Feldman-no-order-2-general}

Two probability Borel measures $ \mu_1 $ and $ \mu_2 $ on $ X $ satisfy the condition of Problem \ref{Q:Kac-Bernstein-probability-represent} if and only if there exist $ x_1 , x_2 \in X $, a compact Corwin subgroup $ K \subset X $, and $ \gamma \in \Gamma^s ( X ) $ such that
    \begin{align}
        \mu_1 = \delta_{ x_1 } * dm_K * \gamma, & &
        \mu_2 = \delta_{ x_2 } * dm_K * \gamma.
    \end{align}

\item \label{item:Feldman-no-order-2-T}

For any compact Corwin group $ K \subset X $, its quotient group $ X / K $ has no subgroup which is isomorphic to $ \mathbb{ T } $ as a topological group.

\item \label{item:Feldman-no-order-2-Corwin}

For any compact Corwin group $ K \subset X $, its Pontryagin dual $ \hat{ K } $ is also a Corwin group.

\item \label{item:Feldman-no-order-2-order}

One has $ X_0 \cap \ker \alpha_X = \{ 0 \} $.

\end{enumerate}
    
\end{fact}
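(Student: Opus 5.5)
The plan is to split the four conditions into a \emph{structural} part, (2)$\Leftrightarrow$(3)$\Leftrightarrow$(4), which is pure topological-group theory via Pontryagin duality, and an \emph{analytic} part, (4)$\Rightarrow$(1) and (1)$\Rightarrow$(2), which uses Proposition \ref{prop:Kac-Bernstein-Fourier} together with the equation \eqref{eq:complex-Kac-Bernstein}.

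\textbf{Structural part.} For a compact $K$ the dual map of $\alpha_K$ is $\alpha_{\hat K}$. Hence $\alpha_K$ is surjective iff $\alpha_{\hat K}$ is injective, and $\alpha_{\hat K}$ is surjective iff $\alpha_K$ is injective. So (3) says exactly that no compact Corwin subgroup $K \subset X$ has an element of order $2$.

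\begin{itemize}
\item (3)$\Rightarrow$(4): use $X_0 \cong \mathbb{R}^n \times C$ with $C$ compact connected (structure theory). An element of order $2$ in $X_0$ lies in $C$, and $C$ is divisible, hence Corwin. This contradicts (3).
\item (4)$\Rightarrow$(3): let $K$ be compact Corwin. Then $K/K_0$ is profinite with $\alpha$ surjective, so it is a pro-(odd) group. An element of order $2$ in $K$ therefore maps to $0$ in $K/K_0$, so it lies in $K_0 \subset X_0$, which (4) excludes.
\item (2): if $X/K \supset \mathbb{T}$, pull back to get $H$ with $K \subset H$ and $H/K \cong \mathbb{T}$. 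Then one finds an element of order $2$ in a compact connected (hence Corwin) subgroup, contradicting (3). Conversely, an element of order $2$ in $X_0$ lies in a compact connected subgroup $C$. Choosing a closed Corwin subgroup $K \subset C$ with $C/K \cong \mathbb{T}$ (via a character of $\hat C$ that is not divisible by $2$) yields a copy of $\mathbb{T}$ in $X/K$.
\end{itemize}
I will check these reductions carefully but expect them to be routine.

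\textbf{(1)$\Rightarrow$(2).} Suppose $H/K \cong \mathbb{T}$ for some compact Corwin $K$. On $\mathbb{T}$, take $\mu_1 = dm_{\mathbb{T}}$ and $\mu_2 = (1+\cos x)\,dm_{\mathbb{T}}$. Then $f_1 = \mathbf{1}_{\{0\}}$, and $f_2$ is supported on $\{0,\pm 1\}$, so $\operatorname{supp} f_2 \cap 2\mathbb{Z} = \{0\}$. This is case \ref{item:Z-characterize-f1-one-point} of Theorem \ref{thm:Z-characterize}, so by Proposition \ref{prop:Kac-Bernstein-Fourier} the sum and difference are independent. Lift both measures to $H \subset X$ by integrating against $dm_K$ on fibres; the independence survives, since $F$ commutes with the quotient map and $dm_K$ is $\alpha$-stable because $K$ is Corwin. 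The lifted $\mu_2$ has a density that is not constant on the $\mathbb{T}$-cosets. It therefore cannot be of the form $\delta_{x_2} * dm_{K'} * \gamma$: comparing $\hat\mu_2$, such a measure has $|\hat\mu_2| = e^{\re \psi}$ on the annihilator of $K'$, whereas here $|\hat\mu_2|$ takes the value $1/2$ at one nonzero character and vanishes at twice it. So (1) fails.

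\textbf{(4)$\Rightarrow$(1).} This is the main obstacle. The plan is Feldman's scheme on $\hat X$. Set $f_j = \hat\mu_j$, which satisfy \eqref{eq:Kac-Bernstein-original} by Proposition \ref{prop:Kac-Bernstein-Fourier}.
\begin{itemize}
\item Step one: replace $\mu_j$ by $\mu_j * \bar\mu_j$, so that $g_j = |f_j|^2 \ge 0$ solves the same equation.
\item Step two: show $E = \{g_1 = 1\} = \{g_2 = 1\}$ is a closed subgroup, and that $\{g_1 \neq 0\} = \{g_2 \neq 0\}$ is a subgroup $B$. Here \eqref{eq:Kac-Bernstein-original} with $u = \pm v$ forces $g_1(2u)g_2(0) = g_1(u)^2 g_2(u)^2$.
\item Step three: the hypothesis $X_0 \cap \ker\alpha_X = \{0\}$ should translate, via annihilators, into the statement that the annihilator $K$ of $B$ is Corwin and that $\alpha_{\hat X}$ acts well enough on $B$. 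One then takes logarithms on $B$ and gets that $\log g_j$ solves the quadratic functional equation. This gives a symmetric Gaussian part and hence $\mu_j = \delta_{x_j} * dm_K * \gamma$ after undoing the symmetrization by Cramér-type decomposition (Feldman's theorem on Gaussian factors).
\end{itemize}
Showing that $B$ is a subgroup and that no ``$\mathbb{T}$-type'' solution (as in case \ref{item:Z-characterize-independent}/\ref{item:Z-characterize-six-point} of Theorem \ref{thm:Z-characterize}) survives is exactly where (4) must be used. I expect that delicate step to be the crux.
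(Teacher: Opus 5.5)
The paper gives no proof of this Fact; it is quoted from Feldman. So your proposal has to stand on its own. Your structural part (2)$\Leftrightarrow$(3)$\Leftrightarrow$(4) is sound. For (3)$\Rightarrow$(2) it is quickest to use $H$ itself: it is compact Corwin, and a generator of $\widehat{H/K}\cong\mathbb{Z}\subset\hat H$ is not in $2\hat H$ because $\hat K$ has no $2$-torsion. Your counterexample for (1)$\Rightarrow$(2) is also sound.

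The genuine gap is (4)$\Rightarrow$(1). Apart from leaving the crux open, two of the steps you state are false as written, and you place the hypothesis wrongly.

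\textbf{Step two.} The equality $\{g_1\neq0\}=\{g_2\neq0\}$ does not follow from the equation. Your own pair $dm_{\mathbb{T}}$, $(1+\cos x)\,dm_{\mathbb{T}}$ has supports $\{0\}$ and $\{0,\pm1\}$. What the equation gives unconditionally (using $f_j(-y)=\overline{f_j(y)}$) is that $B\coloneqq\supp f_1\cap\supp f_2$ is an open subgroup, and that $2y\in\supp f_j$ implies $y\in B$. Hence the annihilator $K$ of $B$ in $X$ is \emph{automatically} a compact Corwin group; no hypothesis is needed for that part of your Step three. Next, substitute $u+v=b$, $u-v=y$ (resp.\ $u+v=y$, $u-v=b$) with $b\in B$. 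This shows $\supp f_j\cap(B+2\hat X)\subset B$. So the supports coincide once $\hat X=B+2\hat X$, i.e.\ once $\hat K\cong\hat X/B$ is Corwin. That is where (3) is genuinely used.

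\textbf{Step three.} Taking logarithms does not make $\log g_j$ quadratic. Put $\phi_j=\log g_j$ on $B$, replace $v$ by $-v$, and add or subtract. You get only two facts: $\phi_1+\phi_2$ satisfies $\phi(u+v)+\phi(u-v)=2\phi(u)+2\phi(v)$, and $\phi_1-\phi_2$ is constant on cosets of $2B$ and otherwise arbitrary. That extra term really occurs. Take the theta-function solutions of Corollary~\ref{cor:Baryshnikov-Eisenberg-Stadje}~\ref{item:Baryshnikov-Eisenberg-Stadje-Gauss} with $k=1$ and small $c_2\neq0$. Then $\phi_1-\phi_2=4c_2((-1)^u-1)$, which is the $(-1)^u$ term of Theorem~\ref{thm:Z-characterize}~\ref{item:Z-characterize-Gauss}. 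So the quadratic shape of each $\phi_j$ is exactly what the hypothesis must produce. Applying a Cram\'er-type theorem to each $\mu_j*\bar\mu_j$ separately presupposes it.

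\textbf{A way to close the gap.}
\begin{enumerate}
\item The product $g_1g_2=e^{\phi_1+\phi_2}\mathbf{1}_B$ is the transform of $\lambda=\mu_1*\bar\mu_1*\mu_2*\bar\mu_2$. This measure is $K$-invariant and descends to a symmetric Gaussian distribution on $X/K$.
\item Feldman's theorem says every factor of a Gaussian distribution is Gaussian on groups with no subgroup topologically isomorphic to $\mathbb{T}$. It applies on $X/K$ exactly by (2).
\item Since the supports equal $B$ (via (3)), each $\mu_j$ is $K$-invariant and is, modulo $K$, a shifted Gaussian.
\item Then $\phi_1-\phi_2$ is quadratic and $2B$-periodic, hence $0$ because $Q(2b)=4Q(b)$. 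This yields a common $\gamma$, after lifting it from $X/K$ to $X$.
\end{enumerate}

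You also omit the easy ``if'' half of (1). It uses only that $K$ is Corwin and that $\psi$ is even and quadratic.
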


Rukhin proved that \ref{item:Feldman-no-order-2-general} holds in the case where $ \alpha_X \colon X \to X $ is an isomorphism \cite{rukhin1969theorem}.
If $ \alpha_X $ is an isomorphism, then \ref{item:Feldman-no-order-2-order} holds.
Thus, Rukhin's result follows from Fact \ref{fact:Feldman-no-order-2} $ \text{ \ref{item:Feldman-no-order-2-order} } \Longrightarrow \text{ \ref{item:Feldman-no-order-2-general} } $.
The original Kac--Bernstein functional equation appears in the proof of Fact \ref{fact:Feldman-no-order-2} by Rukhin and Feldman.
That is, similarly to Proposition \ref{prop:Kac-Bernstein-Fourier}, Problem \ref{Q:Kac-Bernstein-probability-represent} can be reformulated by using \eqref{eq:Kac-Bernstein-original} on $ \hat{ X } $ as follows (see also \cite[Lemma 4.2]{MR4566375}):

\begin{corollary}[{\cite{MR428375}}]
    \label{cor:original-Kac-Bernstein-Fourier}

The following conditions for a pair $ ( \mu_1 , \mu_2 ) $ of probability Borel measures on a locally compact abelian group $ X $ are equivalent:

\begin{enumerate}
  \item \label{item:original-Kac-Bernstein-Fourier-independent}

  The pair $ ( \mu_1 , \mu_2 ) $ satisfies the condition of Problem \ref{Q:Kac-Bernstein-probability-represent}, that is, $ F_* ( \mu_1 \times \mu_2 ) $ is the product measure of two probability measures.
  
  \item \label{item:original-Kac-Bernstein-Fourier-equation}

One has \eqref{eq:Kac-Bernstein-original} for any $ u , v \in \hat{ X } $, where $ f_1 , f_2 \colon \hat{ X } \to \mathbb{ C } $ are defined by \eqref{eq:f1-f2-Fourier}.

\end{enumerate}
    
\end{corollary}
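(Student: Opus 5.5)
The plan is to combine Proposition \ref{prop:Kac-Bernstein-Fourier} with Proposition \ref{prop:original-Kac-Bernstein}, using the normalization \eqref{eq:probability-dual} that holds automatically for probability measures. Let $\mu_1,\mu_2$ be probability Borel measures and $f_j \coloneqq \hat{\mu_j}$. Then $f_j(0)=\mu_j(X)=1$, so \eqref{eq:probability-dual} holds. By Proposition \ref{prop:original-Kac-Bernstein}, \eqref{eq:complex-Kac-Bernstein} for $(f_1,f_2)$ is then equivalent to \eqref{eq:Kac-Bernstein-original}.

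\emph{Direction from \ref{item:original-Kac-Bernstein-Fourier-independent} to \ref{item:original-Kac-Bernstein-Fourier-equation}.} Assume $F_*(\mu_1\times\mu_2)=\nu_1\times\nu_2$ with $\nu_1,\nu_2$ probability measures. In particular $F_*(\mu_1\times\mu_2)$ is a product of two complex measures, so Proposition \ref{prop:Kac-Bernstein-Fourier} gives \eqref{eq:complex-Kac-Bernstein}. The observation above then yields \eqref{eq:Kac-Bernstein-original}.

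\emph{Direction from \ref{item:original-Kac-Bernstein-Fourier-equation} to \ref{item:original-Kac-Bernstein-Fourier-independent}.} Assume \eqref{eq:Kac-Bernstein-original}. It gives \eqref{eq:complex-Kac-Bernstein}, and Proposition \ref{prop:Kac-Bernstein-Fourier} then says that $\lambda\coloneqq F_*(\mu_1\times\mu_2)$ is a product $\nu_1\times\nu_2$ of complex measures. It remains to show that $\nu_1,\nu_2$ can be taken to be probability measures; this is the only real step. I would not manipulate $\nu_1,\nu_2$ directly. Instead I would work with the marginals of $\lambda$: put $\lambda_1\coloneqq(p_1)_*\lambda$ and $\lambda_2\coloneqq(p_2)_*\lambda$, where $p_1,p_2$ are the coordinate projections. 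These are probability measures because $\lambda$ is a pushforward of a probability measure.

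The claim is that $\lambda=\lambda_1\times\lambda_2$, which I would check via Fourier--Stieltjes transforms on $\hat X\times\hat X$, using uniqueness. Since $F(x_1,x_2)=(x_1+x_2,x_1-x_2)$, we get $\hat\lambda(u,v)=f_1(u+v)f_2(u-v)$ (up to the sign convention in the pairing). Setting $v=0$ gives $\hat\lambda_1(u)=f_1(u)f_2(u)$, and setting $u=0$ gives $\hat\lambda_2(v)=f_1(v)f_2(-v)$. Hence \eqref{eq:Kac-Bernstein-original} says exactly $\hat\lambda(u,v)=\hat\lambda_1(u)\hat\lambda_2(v)$, and injectivity of the Fourier--Stieltjes transform on complex measures finishes the proof.

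This route in fact bypasses the complex-measure step altogether: it proves both directions directly from the identity $\hat\lambda(u,v)=f_1(u+v)f_2(u-v)$. For the converse direction, if $\lambda=\nu_1\times\nu_2$ with probability measures, then $\hat\lambda(u,v)=\hat\nu_1(u)\hat\nu_2(v)$. Evaluating at $v=0$ and at $u=0$, where $\hat\nu_j(0)=1$, identifies $\hat\nu_1(u)=f_1(u)f_2(u)$ and $\hat\nu_2(v)=f_1(v)f_2(-v)$, which gives \eqref{eq:Kac-Bernstein-original}.

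The main care needed is the sign convention in the pairing between $\hat X\times\hat X$ and $X\times X$. Only the form of the transform identity $\hat\lambda(u,v)=f_1(u+v)f_2(u-v)$ matters for the argument, so the exact convention does not affect the proof.
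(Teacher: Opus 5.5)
Your proposal is correct and follows the paper's route: the paper obtains this corollary in one line from Proposition \ref{prop:Kac-Bernstein-Fourier} and Proposition \ref{prop:original-Kac-Bernstein}, using that $f_1(0)=f_2(0)=1$ for probability measures. Your marginal argument also supplies a step the paper leaves implicit: Proposition \ref{prop:Kac-Bernstein-Fourier} only yields a product of complex measures, whereas condition \ref{item:original-Kac-Bernstein-Fourier-independent} requires probability measures (this can also be seen without Fourier analysis, since $\lambda=\nu_1\times\nu_2$ forces $\nu_1(X)\nu_2(X)=1$, so the marginals $\nu_2(X)\nu_1$ and $\nu_1(X)\nu_2$ of $\lambda$ are probability measures whose product is $\lambda$).
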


The proof of Fact \ref{fact:Feldman-no-order-2} by Rukhin and Feldman essentially uses Corollary \ref{cor:original-Kac-Bernstein-Fourier} $ \text{ \ref{item:original-Kac-Bernstein-Fourier-independent} } \Longrightarrow \text{ \ref{item:original-Kac-Bernstein-Fourier-equation} }$.
Feldman gave a necessary condition for functions $ f_1 , f_2 \colon G \to \mathbb{ C } $ (which may not be the Fourier--Stieltjes transforms of complex measures) that satisfy \eqref{eq:Kac-Bernstein-original} under the assumptions that $ \supp f_1 = \supp f_2 = G $ and
\begin{align}
    f_1 ( - u ) & = \overline{ f_1 ( u ) }, &
    f_2 ( - u ) = \overline{ f_2 ( u ) } \label{eq:Hermitian-nonvanish}
\end{align}
for any $ u \in G $ \cite[Theorem 3.1]{MR4285097}.
The Fourier--Stieltjes transforms $ f_1 $ and $ f_2 $ of any probability Borel measures $ \mu_1 $ and $ \mu_2 $ satisfy \eqref{eq:probability-dual} and hence Corollary \ref{cor:original-Kac-Bernstein-Fourier} follows from Proposition \ref{prop:Kac-Bernstein-Fourier} and the following proposition:

\begin{proposition}
    \label{prop:original-Kac-Bernstein}

The following condition \ref{item:original-Kac-Bernstein-original} implies \ref{item:original-Kac-Bernstein-general} for any functions $ f_1 , f_2 \colon G \to \mathbb{ C } $ on a locally compact abelian group $ G $.

\begin{enumerate}

\item \label{item:original-Kac-Bernstein-original}

One has \eqref{eq:Kac-Bernstein-original} for any $ u , v \in G $.
\item \label{item:original-Kac-Bernstein-general}

One has \eqref{eq:complex-Kac-Bernstein} for any $ u , u' , v , v' \in G $.
\end{enumerate}

Furthermore, if \eqref{eq:probability-dual} holds, then \ref{item:original-Kac-Bernstein-original} and \ref{item:original-Kac-Bernstein-general} are equivalent.

\end{proposition}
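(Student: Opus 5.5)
The direction (1) $\Rightarrow$ (2) follows by evaluating the right-hand side of \eqref{eq:Kac-Bernstein-original} at each of the four pairs that occur in \eqref{eq:complex-Kac-Bernstein}. Write $g(u) \coloneqq f_1(u) f_2(u)$ and $h(v) \coloneqq f_1(v) f_2(-v)$, so that \eqref{eq:Kac-Bernstein-original} reads $f_1(u+v) f_2(u-v) = g(u) h(v)$. The left-hand side of \eqref{eq:complex-Kac-Bernstein} then becomes
\begin{align}
  g(u) h(v) \, g(u') h(v'),
\end{align}
and the right-hand side becomes
\begin{align}
  g(u) h(v') \, g(u') h(v).
\end{align}
These two products coincide. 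No division is involved, so the argument is valid even when $f_1$ or $f_2$ vanishes somewhere.

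For the converse under \eqref{eq:probability-dual}, we specialize the four variables in \eqref{eq:complex-Kac-Bernstein}. The plan is to take $u' = v$ (with $u, v$ arbitrary) and then choose $v'$ so that the extra factors reduce to values at $0$. Setting $u' = 0$ and $v' = 0$ gives
\begin{align}
  f_1(u+v) f_2(u-v) f_1(0) f_2(0) = f_1(u) f_2(u) f_1(v) f_2(-v),
\end{align}
where the left side uses $f_1(u'+v') f_2(u'-v') = f_1(0) f_2(0)$. On the right, the first factor is $f_1(u+v') f_2(u-v') = f_1(u) f_2(u)$, and the second is $f_1(u'+v) f_2(u'-v) = f_1(v) f_2(-v)$. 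Applying \eqref{eq:probability-dual} turns this identity into exactly \eqref{eq:Kac-Bernstein-original}.

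Nothing here is a real obstacle; the proof is pure bookkeeping. The only care needed is to track the sign in $f_2(u'-v)$ when $u' = 0$, so that it yields $f_2(-v)$ and not $f_2(v)$. This bookkeeping is also why the normalization \eqref{eq:probability-dual} is needed only for the converse.
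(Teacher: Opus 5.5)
Your proof is correct and follows the paper's argument: for the forward direction you rewrite both sides of \eqref{eq:complex-Kac-Bernstein} via \eqref{eq:Kac-Bernstein-original} as the same product, and for the converse you substitute $u' = v' = 0$ and apply \eqref{eq:probability-dual}. The only blemish is the stray sentence announcing the choice $u' = v$, which you never use and which contradicts the substitution $u' = 0$ you actually make, so it should be deleted.
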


\begin{proof}

Since 
\begin{align}
    & \phantom{ {} = {} } f_1 ( u + v ) f_2 ( u - v ) f_1 ( u' + v' ) f_2 ( u' - v' ) \\
    & = f_1 ( u ) f_2 ( u ) f_1 ( v ) f_2 ( - v ) f_1 ( u' ) f_2 ( u' ) f_1 ( v' ) f_2 ( - v' ) \\
    & = f_1 ( u + v' ) f_2 ( u - v' ) f_1 ( u' + v ) f_2 ( u' - v )
\end{align}
holds by \eqref{eq:Kac-Bernstein-original}, we have \ref{item:original-Kac-Bernstein-general} by \ref{item:original-Kac-Bernstein-original}.

By substituting $ u' = v' = 0 $ into \eqref{eq:complex-Kac-Bernstein}, we have
\begin{align}
    f_1 ( u + v ) f_2 ( u - v ) f_1 ( 0 ) f_2 ( 0 )
    = f_1 ( u ) f_2 ( u ) f_1 ( v ) f_2 ( - v ).
\end{align}
Thus, if \eqref{eq:probability-dual} holds, then \ref{item:original-Kac-Bernstein-original} follows from \ref{item:original-Kac-Bernstein-general}.
\end{proof}

The Fourier--Stieltjes transforms $ f_1 $ and $ f_2 $ of probability Borel measures $ \mu_1 $ and $ \mu_2 $ satisfy \eqref{eq:probability-dual} and \eqref{eq:Hermitian-nonvanish}.
Thus, if $ f_1 $ and $ f_2 $ satisfy the equivalent conditions of Proposition \ref{prop:original-Kac-Bernstein}, then the intersection $ \supp f_1 \cap \supp f_2 $ is an open subgroup of $ G $ (see \cite[Theorem 1]{MR3626791}).
On the other hand, $ \supp f_1 \cap \supp f_2 $ may not be an open subgroup of $ G $ when $ \mu_1 $ and $ \mu_2 $ are complex Borel measures.
This point makes Problem \ref{Q:Kac-Bernstein-complex} more difficult than Problem \ref{Q:Kac-Bernstein-probability-represent}.

In Sections \ref{subsec:Kac-Bernstein-probability-Z} and \ref{subsec:Kac-Bernstein-probability-R}, we see the Kac--Bernstein theorem in more detail when $ G = \mathbb{ Z } $ and $ G = \mathbb{ R } $, respectively.

\subsection{Case of \texorpdfstring{$ G = \mathbb{ Z } $}{G = Z}}
\label{subsec:Kac-Bernstein-probability-Z}

When $ G = \mathbb{ Z } $, the solutions of the complex Kac--Bernstein functional equation \eqref{eq:complex-Kac-Bernstein} are given in Theorem \ref{thm:Z-characterize}.
Thus, the functions satisfying the equivalent conditions of Proposition \ref{prop:original-Kac-Bernstein} can be classified.
That is, we obtain the following corollary which is a specialization of the classification in Theorem \ref{thm:Z-characterize} to the case where \eqref{eq:probability-dual} holds:

\begin{corollary}
    \label{cor:original-Kac-Bernstein-Z}

When $ G = \mathbb{ Z } $, two functions $ f_1 , f_2 \colon \mathbb{ Z } \to \mathbb{ C } $ with \eqref{eq:probability-dual} satisfy the equivalent conditions of Proposition \ref{prop:original-Kac-Bernstein} if and only if at least one of the following conditions holds:

\begin{enumerate}

\item \label{item:original-Kac-Bernstein-Z-f1-one-point}

One has $ \supp f_1 = \supp f_2 \cap 2 \mathbb{ Z } = \{ 0 \} $.

\item \label{item:original-Kac-Bernstein-Z-f2-one-point}

One has $ \supp f_1 \cap 2 \mathbb{ Z } = \supp f_2 = \{ 0 \} $.

\item \label{item:original-Kac-Bernstein-Z-six-point-f1-symmetry}

There exists $ k  \in 1 + 2 \mathbb{ Z } $ such that
\begin{align}
    \supp f_1 \subset \{ 0 , \pm k \}, & &
    \supp f_2 \subset \{ 0 , k , 2 k \}, & &
    f_2 ( 2 k ) = f_2 ( k )^2 f_1 ( - k ) f_1 ( k ).
\end{align}

\item \label{item:original-Kac-Bernstein-Z-six-point-f2-symmetry}

There exists $ k  \in 1 + 2 \mathbb{ Z } $ such that
\begin{align}
    \supp f_1 \subset \{ 0 , k , 2 k \}, & &
    \supp f_2 \subset \{ 0 , \pm k \}, & &
    f_1 ( 2 k ) = f_1 ( k )^2 f_2 ( - k ) f_2 ( k ).
\end{align}

\item \label{item:original-Kac-Bernstein-Z-Gauss}

There exist $c_1 , c_2 , c_3 , c_5 \in \mathbb{ C } $ and $ k = 1 , 3 , 5 , \cdots $ such that
\begin{align}
  f_1 ( u )
  & =
  \left\{
  \begin{aligned}
    & e^{ c_1 u^2 + c_2 ( ( - 1 )^u - 1 ) + c_3 u } & & \text{ if $ u \in k \mathbb{ Z } $} \\
    & 0 & & \text{ if $ u \notin k \mathbb{ Z } $ }
  \end{aligned}
  \right. , \\
  f_2 ( u )
  & =
  \left\{
  \begin{aligned}
    & e^{ c_1 u^2 - c_2 ( ( - 1 )^u - 1 ) + c_5 u } & & \text{ if $ u \in k \mathbb{ Z } $} \\
    & 0 & & \text{ if $ u \notin k \mathbb{ Z } $ }
  \end{aligned}
  \right. . \label{eq:original-Kac-Bernstein-Z-Gauss-represent}
\end{align}

\end{enumerate}
    
\end{corollary}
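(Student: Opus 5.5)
Since $f_1(0)=f_2(0)=1$, Proposition \ref{prop:original-Kac-Bernstein} tells us that the equivalent conditions there are the same as \eqref{eq:complex-Kac-Bernstein}. So the plan is to take the classification of Theorem \ref{thm:Z-characterize} and add the constraints $0\in\supp f_1\cap\supp f_2$ and $f_1(0)=f_2(0)=1$. Because every item of Theorem \ref{thm:Z-characterize} is a sufficient condition, the direction ``if'' follows by checking that each item of the corollary is a special case of some item of the theorem. The direction ``only if'' is handled by going through the theorem's cases one by one.

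Items \ref{item:Z-characterize-f1-zero} and \ref{item:Z-characterize-f2-zero} cannot occur, since $f_1(0)=1\neq 0$. Item \ref{item:Z-characterize-independent} cannot occur either: $0$ would have to lie in both $a+2\mathbb{Z}$ and $a+1+2\mathbb{Z}$. In item \ref{item:Z-characterize-f1-one-point} we have $0\in\supp f_1=\{a\}$, so $a=0$. Then $\supp f_2\cap 2\mathbb{Z}$ is a single point, and it contains $0$, so it equals $\{0\}$; this is item \ref{item:original-Kac-Bernstein-Z-f1-one-point}. Item \ref{item:Z-characterize-f2-one-point} gives item \ref{item:original-Kac-Bernstein-Z-f2-one-point} in the same way.

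For item \ref{item:Z-characterize-Gauss}, the condition $0\in a_1+k\mathbb{Z}$ and $0\in a_2+k\mathbb{Z}$ lets us take $a_1=a_2=0$ (the parity condition $a_1+a_2\in 2\mathbb{Z}$ is then automatic). The normalization $f_1(0)=1$ forces $c_2+c_4\in 2\pi i\mathbb{Z}$, and $f_2(0)=1$ forces $-c_2+c_6\in 2\pi i\mathbb{Z}$. Substituting these shows that the formulas become exactly \eqref{eq:original-Kac-Bernstein-Z-Gauss-represent}. Conversely, \eqref{eq:original-Kac-Bernstein-Z-Gauss-represent} is the case $c_4=-c_2$, $c_6=c_2$ of the theorem.

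The main case is item \ref{item:Z-characterize-six-point}. Here $k$ is odd and $a_1+a_2$ is odd, and $0$ lies in both $\{a_1,a_1\pm k\}$ and $\{a_2,a_2\pm k\}$. Since $k$ is odd, the elements $a_1$ and $a_1\pm k$ have opposite parities, and likewise for $a_2$. Because $a_1$ and $a_2$ themselves have opposite parities, exactly one of them is even. There are two cases.

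\textbf{Case $a_1=0$.} Then $a_2$ is odd, so $a_2=\pm k$. Replacing $k$ by $-k$ if necessary (the statement is symmetric in $\pm k$), we may take $a_2=k$. This gives $\supp f_1\subset\{0,\pm k\}$ and $\supp f_2\subset\{0,k,2k\}$. Equation \eqref{eq:Z-characterize-six-point-equation} becomes $f_2(0)f_2(2k)=f_2(k)^2f_1(-k)f_1(k)$. With $f_1(0)=f_2(0)=1$ this is exactly item \ref{item:original-Kac-Bernstein-Z-six-point-f1-symmetry}.

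\textbf{Case $a_2=0$.} This is the symmetric case, with the roles of $f_1$ and $f_2$ swapped, and it gives item \ref{item:original-Kac-Bernstein-Z-six-point-f2-symmetry}.

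The converse for these two items is the same substitution read backwards: take $a_1=0$, $a_2=k$ (respectively $a_1=k$, $a_2=0$). Then $a_1+a_2=k$ is odd, as the theorem requires.

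The only delicate point is this six-point parity bookkeeping together with the sign normalization of $k$. Everything else is direct substitution.
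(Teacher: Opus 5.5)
Your proposal is correct and takes the same route as the paper. The paper obtains this corollary as the specialization of Theorem \ref{thm:Z-characterize} to functions satisfying \eqref{eq:probability-dual}, with Proposition \ref{prop:original-Kac-Bernstein} identifying the two functional equations; your case-by-case check supplies the details the paper leaves implicit, namely ruling out items \ref{item:Z-characterize-f1-zero}, \ref{item:Z-characterize-f2-zero}, \ref{item:Z-characterize-independent} via $0\in\supp f_1\cap\supp f_2$, taking $c_4\equiv -c_2$ and $c_6\equiv c_2$ modulo $2\pi i\mathbb{Z}$ in the Gauss case, and the parity argument forcing one of $a_1,a_2$ to be $0$ and the other $\pm k$ in the six-point case.
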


When $ X = \mathbb{ T } $, the Fourier--Stieltjes transforms of two probability Borel measures which satisfy the condition of Problem \ref{Q:Kac-Bernstein-probability-represent} can be classified by using Corollary \ref{cor:original-Kac-Bernstein-Z}.
That is, an alternative proof of the following corollary proved by  Baryshnikov--Eisenberg--Stadje can be obtained.
This classification can be regarded as the specialization of Corollary \ref{cor:complex-T} from complex measures to probability measures.

\begin{corollary}[{\cite[Theorem 2]{MR1221915}}]
  \label{cor:Baryshnikov-Eisenberg-Stadje}

A pair $ ( \mu_1 , \mu_2 ) $ of probability Borel measures on $ X = \mathbb{ T } $ satisfies the condition of Problem \ref{Q:Kac-Bernstein-probability-represent} if and only if at least one of the following conditions holds:

\begin{enumerate}
\item \label{item:Baryshnikov-Eisenberg-Stadje-f1-one-point}

One has $ \mu_1 = dm_{ \mathbb{ T } } $ and $ \mu_2 - dm_{ \mathbb{ T } } \in M_- $.

\item \label{item:Baryshnikov-Eisenberg-Stadje-f2-one-point}

One has $ \mu_2 = dm_{ \mathbb{ T } } $ and $ \mu_1 - dm_{ \mathbb{ T } } \in M_- $.

\item \label{item:Baryshnikov-Eisenberg-Stadje-Gauss}

There exist $ c_2 \in \mathbb{ R } $, $ r > 0 $, $ \theta_1 , \theta_2 \in \mathbb{ T } $, and $ k = 1 , 3 , 5 , \cdots $ such that
\begin{align}
\mu_1 & = e^{ - c_2 } \left( \cosh ( c_2 ) \vartheta \left( \frac{ k x + \theta_1 }{ 2 \pi } , i r \right) + \sinh ( c_2 ) \vartheta \left( \frac{ k x + \theta_1 + \pi }{ 2 \pi } , i r \right) \right) dm_{ \mathbb{ T } } ( x ), \\
\mu_2 & = e^{ c_2 } \left( \cosh ( c_2 ) \vartheta \left( \frac{ k x + \theta_2 }{ 2 \pi } , i r \right) - \sinh ( c_2 ) \vartheta \left( \frac{ k x + \theta_2 + \pi }{ 2 \pi } , i r \right) \right) dm_{ \mathbb{ T } } ( x ).
\end{align}

\item \label{item:Baryshnikov-Eisenberg-Stadje-Zk}

There exist $ \theta_1 , \theta_2 \in \mathbb{ T } $ and $ k = 1 , 3 , 5 , \cdots $ such that
\begin{align}
    \mu_1 = \frac{ 1 }{ k } \sum_{ l = 0 }^{ k - 1 } \delta_{ \theta_1 + 2 \pi l / k }, & &
    \mu_2 = \frac{ 1 }{ k } \sum_{ l = 0 }^{ k - 1 } \delta_{ \theta_2 + 2 \pi l / k }.
\end{align}

\end{enumerate}
    
\end{corollary}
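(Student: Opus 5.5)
We derive Corollary \ref{cor:Baryshnikov-Eisenberg-Stadje} from Proposition \ref{prop:Kac-Bernstein-Fourier} (equivalently Corollary \ref{cor:original-Kac-Bernstein-Fourier}) together with Corollary \ref{cor:original-Kac-Bernstein-Z}. Let $\mu_1,\mu_2$ be probability measures on $\mathbb{T}$ and set $f_j=\hat{\mu_j}$. Then $f_j(0)=1$, $f_j(-u)=\overline{f_j(u)}$ and $|f_j|\le 1$, and the condition of Problem \ref{Q:Kac-Bernstein-probability-represent} is equivalent to $(f_1,f_2)$ falling into one of the cases \ref{item:original-Kac-Bernstein-Z-f1-one-point}--\ref{item:original-Kac-Bernstein-Z-Gauss} of Corollary \ref{cor:original-Kac-Bernstein-Z}. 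So we go through these cases, impose positive-definiteness and Hermitian symmetry, and invert the Fourier transform. For the converse direction, it suffices to check that each listed pair has Fourier coefficients of one of these forms, which is a direct computation.

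\emph{Cases \ref{item:original-Kac-Bernstein-Z-f1-one-point} and \ref{item:original-Kac-Bernstein-Z-f2-one-point}.} Here $\supp f_1=\{0\}$, so $\mu_1=dm_{\mathbb{T}}$. The condition $\supp f_2\cap 2\mathbb{Z}=\{0\}$ with $f_2(0)=1$ says that $\mu_2-dm_{\mathbb{T}}$ has Fourier coefficients supported on the odd integers. Equivalently, $(\mu_2-dm_{\mathbb{T}})*\delta_\pi=-(\mu_2-dm_{\mathbb{T}})$, i.e. $\mu_2-dm_{\mathbb{T}}\in M_-$. The other case is symmetric.

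\emph{Cases \ref{item:original-Kac-Bernstein-Z-six-point-f1-symmetry} and \ref{item:original-Kac-Bernstein-Z-six-point-f2-symmetry}.} We show these cannot occur for probability measures unless they reduce to earlier cases. Hermitian symmetry of $f_2$ with $\supp f_2\subset\{0,k,2k\}$ forces $f_2(k)=\overline{f_2(-k)}=0$ and likewise $f_2(2k)=0$. Hence $f_2=\delta_0$, $\mu_2=dm_{\mathbb{T}}$, and $f_1$ is supported in $\{0,\pm k\}$ with $k$ odd. This is case \ref{item:Baryshnikov-Eisenberg-Stadje-f2-one-point}, since $\supp(f_1-\delta_0)$ is odd. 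This is why the table marks ``N'' for these rows.

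\emph{Case \ref{item:original-Kac-Bernstein-Z-Gauss}.} This is the main obstacle. Hermitian symmetry $f_j(-u)=\overline{f_j(u)}$ on $k\mathbb{Z}$ forces $c_1,c_2\in\mathbb{R}$ and $c_3,c_5\in i\mathbb{R}$ (up to the usual $2\pi i$ ambiguity, which only shifts $\theta_j$ by multiples of $2\pi/k$). The bound $|f_j|\le1$ forces $c_1\le 0$.

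If $c_1<0$, write $c_1=-\pi r/k^2$ and $c_3=-i\theta_1/k$. Splitting the sum over $u=kn$ by the parity of $n$ (note $(-1)^u=(-1)^n$ since $k$ is odd), write
\[
e^{c_2((-1)^n-1)}=e^{-c_2}\Bigl(\cosh c_2+(-1)^n\sinh c_2\Bigr).
\]
Then the Fourier series $\sum_n f_1(kn)e^{iknx}$ becomes
\[
e^{-c_2}\Bigl(\cosh(c_2)\,\vartheta\Bigl(\tfrac{kx+\theta_1}{2\pi},ir\Bigr)+\sinh(c_2)\,\vartheta\Bigl(\tfrac{kx+\theta_1+\pi}{2\pi},ir\Bigr)\Bigr),
\]
which is case \ref{item:Baryshnikov-Eisenberg-Stadje-Gauss}; $\mu_2$ is analogous with $-c_2$. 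The sign conventions of $\theta_j$ versus the transform $e^{-iux}$ need care here.

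If $c_1=0$, then $|f_1(kn)|=e^{c_2((-1)^n-1)}$ and $|f_2(kn)|=e^{-c_2((-1)^n-1)}$. The bound $|f_j|\le 1$ for both $j$ forces $c_2=0$. Then $f_j(kn)=e^{c_3kn}$ is a unimodular character restricted to $k\mathbb{Z}$, so $\mu_j$ is the uniform measure on a coset of $\frac{2\pi}{k}\mathbb{Z}/2\pi\mathbb{Z}$. This is case \ref{item:Baryshnikov-Eisenberg-Stadje-Zk}.

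The delicate points are bookkeeping: matching the parameters to the stated $e^{\mp c_2}$ normalization and the theta arguments, and confirming that positivity of the resulting measures imposes no further restriction. Positivity does hold automatically, because the given theta combinations are positive densities — they are images of wrapped Gaussian mixtures — consistent with the converse direction.
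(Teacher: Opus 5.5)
Your route is the paper's: pass to $f_j=\hat{\mu_j}$, invoke Corollary \ref{cor:original-Kac-Bernstein-Z}, and sort its cases using $f_j(-u)=\overline{f_j(u)}$ and $|f_j|\le 1$. Your treatment of cases \ref{item:original-Kac-Bernstein-Z-six-point-f1-symmetry} and \ref{item:original-Kac-Bernstein-Z-six-point-f2-symmetry} is more accurate than the paper's remark that they ``do not hold''. They can hold, but only with $f_2=\delta_0$ or $f_1=\delta_0$, which lands inside \ref{item:Baryshnikov-Eisenberg-Stadje-f2-one-point} or \ref{item:Baryshnikov-Eisenberg-Stadje-f1-one-point}.

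\textbf{The gap is in the Gauss case.} You assert that Hermitian symmetry forces $c_1,c_2\in\mathbb{R}$ and $c_3,c_5\in i\mathbb{R}$, with the $2\pi i$ ambiguity only shifting $\theta_j$ by multiples of $2\pi/k$. Comparing $f_j(-u)$ with $\overline{f_j(u)}$ only gives congruences of exponents modulo $2\pi i\mathbb{Z}$, so this is false for the constants that Corollary \ref{cor:original-Kac-Bernstein-Z} hands you. For instance, take $k=1$, $c_1=-1+\frac{\pi i}{2}$, $c_2=\frac{\pi i}{4}$ and $c_3=c_5=0$. These give $f_1(u)=e^{-u^2}$ and $f_2(u)=(-1)^u e^{-u^2}$, the transforms of two wrapped Gaussians, yet $c_1,c_2\notin\mathbb{R}$. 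A real representation is $c_1=-1$, $c_2=0$, $c_3=0$, $c_5=\pi i$. So the ambiguity also changes $\im c_1$ and $\im c_2$, and it shifts $\theta_2$ by $\pi$, which is not a multiple of $2\pi/k$.

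What you need is an argument that real constants \emph{can be chosen}. The paper supplies it as follows:
\begin{itemize}
\item Reduce to $k=1$.
\item Choose logarithms $f_1(1)=e^{q_3}$ and $f_2(1)=e^{q_6}$, so that $f_1(-1)=e^{\overline{q_3}}$, $f_2(-1)=e^{\overline{q_6}}$ and $f_1(0)=f_2(0)=e^{0}$.
\item Apply the explicit formula $c=Pq$ of Lemma \ref{lem:supp-all}. This yields $c_1=\re(q_3+q_6)/2$, $c_2=\re(q_6-q_3)/4$, $c_3=i\im q_3$ and $c_5=i\im q_6$.
\end{itemize}
Alternatively, working the congruences for both $f_1$ and $f_2$ shows $k^2\im c_1\in\frac{\pi}{2}\mathbb{Z}$. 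It also shows that the imaginary parts of $c_1,c_2$ contribute only characters $(-1)^{lu}$ on $k\mathbb{Z}$, which can be absorbed into $c_3,c_5$.

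Once this step is inserted, the rest of your Gauss case is correct. That covers $c_1\le 0$, the theta-series inversion for $c_1<0$, and $c_2=0$ from $|f_1(k)|,|f_2(k)|\le 1$ when $c_1=0$. For the inversion, note that $c_3=i\theta_1/k$, not $-i\theta_1/k$, is what produces $kx+\theta_1$. Handling general odd $k$ directly is fine.

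\textbf{Your closing positivity claim is false and should be dropped.} You say positivity imposes no further restriction because the theta combinations are wrapped Gaussian mixtures. For $c_2\neq 0$, one of the two combinations in \ref{item:Baryshnikov-Eisenberg-Stadje-Gauss} has a negative coefficient: $\sinh c_2$ for $\mu_1$ when $c_2<0$, or $-\sinh c_2$ for $\mu_2$ when $c_2>0$. Concretely, $|f_1(k)|=e^{-\pi r-2c_2}$ and $|f_2(k)|=e^{-\pi r+2c_2}$, so probability measures require $|c_2|\le \pi r/2$. The exact condition is $|\tanh c_2|\le \vartheta(\frac{1}{2},ir)/\vartheta(0,ir)$. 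This does not damage the corollary as stated, because both directions assume from the outset that $\mu_1,\mu_2$ are probability measures. Nothing in your argument relies on the claim, so simply remove it.
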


The correspondence between the conditions of Corollaries \ref{cor:original-Kac-Bernstein-Z} and \ref{cor:Baryshnikov-Eisenberg-Stadje} and those of Theorem \ref{thm:Z-characterize} and Corollary \ref{cor:complex-T} is summarized in Table \ref{tab:Z-characterize-complex-T-correspond} in Section \ref{sec:main-relate}.

\subsection{Case of \texorpdfstring{$ G = \mathbb{ R } $}{G = R}}
\label{subsec:Kac-Bernstein-probability-R}

When $ G = \mathbb{ R } $, Corollary \ref{cor:R-characterize} gives the solutions of the complex Kac--Bernstein functional equation \eqref{eq:complex-Kac-Bernstein}.
Thus, the functions satisfying the equivalent conditions of Proposition \ref{prop:original-Kac-Bernstein} can be classified.
That is, we obtain the following corollary which is the specialization of the classification in Corollary \ref{cor:R-characterize} to the case where \eqref{eq:probability-dual} holds:

\begin{corollary}
    \label{cor:original-Kac-Bernstein-R}

When $ G = \mathbb{ R } $, continuous functions $ f_1 , f_2 \colon \mathbb{ R } \to \mathbb{ C } $ with \eqref{eq:probability-dual} satisfy the equivalent conditions of Proposition \ref{prop:original-Kac-Bernstein} if and only if there exist $ d_1 , d_2 , d_3 \in \mathbb{ C } $ such that
\begin{align}
    f_1 ( u ) & = e^{ d_1 u^2 + d_2 u }, &
    f_2 ( u ) = e^{ d_1 u^2 + d_3 u } 
\end{align}
hold for any $ u \in \mathbb{ R } $.
    
\end{corollary}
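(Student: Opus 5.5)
The plan is to deduce the statement from Corollary \ref{cor:R-characterize} combined with Proposition \ref{prop:original-Kac-Bernstein}. Assume \eqref{eq:probability-dual} holds. By Proposition \ref{prop:original-Kac-Bernstein}, conditions \ref{item:original-Kac-Bernstein-original} and \ref{item:original-Kac-Bernstein-general} are then equivalent. So it suffices to classify the continuous solutions of \eqref{eq:complex-Kac-Bernstein} satisfying $ f_1 ( 0 ) = f_2 ( 0 ) = 1 $.

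For the \emph{only if} direction, take such $ f_1 , f_2 $. Corollary \ref{cor:R-characterize} says that $ f_1 \equiv 0 $, or $ f_2 \equiv 0 $, or \eqref{eq:R-characterize-Gauss-equation} holds.
\begin{itemize}
\item The first two cases contradict $ f_1 ( 0 ) = f_2 ( 0 ) = 1 $.
\item In the third case, evaluating at $ u = 0 $ gives $ e^{ d_3 } = e^{ d_5 } = 1 $. Hence $ e^{ d_3 } $ and $ e^{ d_5 } $ can be dropped from the formulas.
\item Renaming $ ( d_1 , d_2 , d_4 ) $ as $ ( d_1 , d_2 , d_3 ) $ then yields exactly the claimed form.
\end{itemize}

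For the \emph{if} direction, take $ f_1 ( u ) = e^{ d_1 u^2 + d_2 u } $ and $ f_2 ( u ) = e^{ d_1 u^2 + d_3 u } $. These are continuous and satisfy $ f_1 ( 0 ) = f_2 ( 0 ) = 1 $. They are of the form \eqref{eq:R-characterize-Gauss-equation} with $ d_3 = d_5 = 0 $ in the notation there, so by Corollary \ref{cor:R-characterize} they satisfy \eqref{eq:complex-Kac-Bernstein}. Proposition \ref{prop:original-Kac-Bernstein} then gives \eqref{eq:Kac-Bernstein-original}.

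Alternatively, \eqref{eq:Kac-Bernstein-original} can be checked directly from the exponents. The left side has exponent $ d_1 ( ( u + v )^2 + ( u - v )^2 ) + d_2 ( u + v ) + d_3 ( u - v ) $. The right side has exponent $ 2 d_1 u^2 + 2 d_1 v^2 + d_2 u + d_3 u + d_2 v - d_3 v $. These agree by the parallelogram identity.

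There is no real obstacle here: all the substance lies in Corollary \ref{cor:R-characterize}, which may be assumed. The only points needing care are ruling out the zero cases using \eqref{eq:probability-dual}, and noting that continuity is needed in order to invoke Corollary \ref{cor:R-characterize}.
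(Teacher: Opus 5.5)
Your proposal is correct and is essentially the paper's own argument: the paper obtains Corollary \ref{cor:original-Kac-Bernstein-R} as the specialization of Corollary \ref{cor:R-characterize} to $f_1(0)=f_2(0)=1$. That specialization rules out the zero cases and forces $e^{d_3}=e^{d_5}=1$, while Proposition \ref{prop:original-Kac-Bernstein} supplies the equivalence with \eqref{eq:Kac-Bernstein-original}; your direct exponent check via the parallelogram identity is a harmless extra that makes the converse self-contained.
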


Corollary \ref{cor:original-Kac-Bernstein-R} classifies the functions satisfying \eqref{eq:complex-Kac-Bernstein}, which is more general than \eqref{eq:Kac-Bernstein-original}, under the continuity assumption.
On the other hand, Kannappan classified the positive functions $ f_1 , f_2 \colon \mathbb{ R } \to ( 0 , \infty ) $ (which may not be continuous) with \eqref{eq:Kac-Bernstein-original} \cite[Corollary 1]{MR1792081}.

The original Kac--Bernstein theorem can be obtained by restricting Corollary \ref{cor:Kac-Bernstein-complex-R} to probability Borel measures as follows:

\begin{corollary}[Kac--Bernstein theorem for $ \mathbb{ R } $, {\cite{MR371}, \cite{bernstein1941property}}]
    \label{cor:original-Kac-Bernstein}

A pair $ ( \mu_1 , \mu_2 ) $ of probability Borel measures on $ X = \mathbb{ R } $ satisfies the condition of Problem \ref{Q:Kac-Bernstein-probability-represent} if and only if either of the following conditions holds:

\begin{enumerate}

\item \label{item:original-Kac-Bernstein-R-Gauss}

There exist $ A > 0 $ and $ B_1 , B_2 \in \mathbb{ R } $ such that
\begin{align}
  \mu_1 & = \sqrt{ \frac{ A }{ \pi } } e^{ - A ( x + B_1 )^2 } dx, &
  \mu_2 & = \sqrt{ \frac{ A }{ \pi } } e^{ - A ( x + B_2 )^2 } dx.
\end{align}

\item \label{item:original-Kac-Bernstein-degenerate}

There exist $ t_1 , t_2 \in \mathbb{ R } $ such that
\begin{align}
  \mu_1 & = \delta_{ t_1 }, &
  \mu_2 & = \delta_{ t_2 }.
\end{align}

\end{enumerate}

\end{corollary}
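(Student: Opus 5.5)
The plan is to derive the original Kac--Bernstein theorem (Corollary \ref{cor:original-Kac-Bernstein}) from Corollary \ref{cor:Kac-Bernstein-complex-R}. For probability measures, the condition of Problem \ref{Q:Kac-Bernstein-probability-represent} is that $F_*(\mu_1\times\mu_2)$ is a product of two probability measures. By Corollary \ref{cor:original-Kac-Bernstein-Fourier} together with Proposition \ref{prop:Kac-Bernstein-Fourier}, this is equivalent to the condition of Problem \ref{Q:Kac-Bernstein-complex}. For the equivalence in the direction where the product factors are complex, note the following. If $F_*(\mu_1\times\mu_2)=\nu_1\times\nu_2$ with complex $\nu_i$, then the left side is a probability measure. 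Hence $\nu_1(X)\nu_2(X)=1$, and the Fourier transforms satisfy \eqref{eq:complex-Kac-Bernstein} with $f_i(0)=1$. Proposition \ref{prop:original-Kac-Bernstein} then upgrades this to \eqref{eq:Kac-Bernstein-original}, and Corollary \ref{cor:original-Kac-Bernstein-Fourier} returns the probability-product statement. So it suffices to take the list in Corollary \ref{cor:Kac-Bernstein-complex-R} and keep exactly the probability measures.

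The cases then go as follows.
\begin{enumerate}
\item Cases \ref{item:Kac-Bernstein-complex-R-f1-zero} and \ref{item:Kac-Bernstein-complex-R-f2-zero} are excluded, since probability measures are nonzero.
\item In case \ref{item:Kac-Bernstein-complex-R-degenerate}, a measure $c\delta_t$ is a probability measure iff $c=1$, which gives \ref{item:original-Kac-Bernstein-degenerate}.
\item In case \ref{item:Kac-Bernstein-complex-R-Gauss}, the density $e^{c_1x^2+c_2x+c_3}$ must be real and nonnegative for almost every $x$, hence for every $x$ by continuity. So the quadratic $c_1x^2+c_2x+c_3$ has imaginary part in $2\pi\mathbb{Z}$ for all $x$. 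Being a continuous polynomial in $x$, its imaginary part is then constant in $2\pi\mathbb{Z}$. Therefore $\im c_1=\im c_2=0$ and $e^{c_3}>0$, and similarly for $\mu_2$.
\item Writing $c_1=-A$ with $A>0$ (allowed since $\re c_1<0$), completing the square gives $e^{-A(x+B_i)^2}$ times a positive constant. Total mass $1$ forces that constant to be $\sqrt{A/\pi}$. The common $c_1$ gives the same $A$ for both measures, which yields \ref{item:original-Kac-Bernstein-R-Gauss}.
\end{enumerate}

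For the converse, both families in the statement are probability measures and appear in Corollary \ref{cor:Kac-Bernstein-complex-R}. Hence $F_*(\mu_1\times\mu_2)$ is a product of complex measures, and by the argument above it is a product of probability measures.

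The main point requiring care is the passage between ``product of complex measures'' and ``product of probability measures''. The factors $\nu_i$ themselves need not be normalized. One can rescale them to $\nu_1/\nu_1(X)$ and $\nu_2/\nu_2(X)$, which have total mass $1$. Their marginals are the pushforwards of $\mu_1\times\mu_2$ under the sum and difference maps, hence positive. So the rescaled $\nu_i$ are probability measures, which settles this step directly.
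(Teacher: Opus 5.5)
Your proposal is correct and follows the paper's route: the paper obtains this corollary simply by restricting Corollary~\ref{cor:Kac-Bernstein-complex-R} to probability Borel measures, without writing out a proof. You supply the details the paper leaves implicit, namely rescaling the factors $\nu_1,\nu_2$ to probability measures via their marginals, and the continuity/discreteness argument forcing $\im c_1=\im c_2=0$ and $e^{c_3}>0$ in the Gaussian case.
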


Even when $ X = \mathbb{ R }^n $, Kac and Bernstein proved independently that the random variables satisfying the condition of Problem \ref{Q:Kac-Bernstein-probability-represent} are normally distributed.

\section{Proof of Theorem \ref{thm:Z-characterize}}
\label{sec:Z-characterize-proof}

In this section, we prove Theorem \ref{thm:Z-characterize}.
We prepare several lemmas of Theorem \ref{thm:Z-characterize} in Subsection \ref{subsec:Z-characterize-proof-lemma}, and give a proof of Theorem \ref{thm:Z-characterize} in Subsection \ref{subsec:Z-characterize-proof-complete}.

\subsection{Key lemmas of Theorem \ref{thm:Z-characterize}}
\label{subsec:Z-characterize-proof-lemma}

In this subsection, we show several lemmas to prove Theorem \ref{thm:Z-characterize}.
First, we prove that \eqref{eq:Z-characterize-six-point-equation} is a special case of \eqref{eq:complex-Kac-Bernstein}:

\begin{lemma}
    \label{lem:six-point-equation}

If functions $ f_1 , f_2 \colon \mathbb{ Z } \to \mathbb{ C } $ satisfy \eqref{eq:complex-Kac-Bernstein} for any $ u , u' , v , v' \in \mathbb{ Z } $, then \eqref{eq:Z-characterize-six-point-equation} holds for any $ a_1 , a_2 , k \in \mathbb{ Z } $ with
\begin{align}
   a_1 + a_2 + k \in 2 \mathbb{ Z }. \label{eq:six-point-equation-assume}
\end{align}

\end{lemma}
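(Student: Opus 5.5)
We prove the lemma by a single direct substitution into \eqref{eq:complex-Kac-Bernstein}. The idea is to choose $ u , u' , v , v' \in \mathbb{ Z } $ so that the four arguments of $ f_1 $ and the four arguments of $ f_2 $ on the two sides reproduce exactly the two sides of \eqref{eq:Z-characterize-six-point-equation}. That identity is written entirely in terms of $ f_1 $ at $ a_1 , a_1 \pm k $ and $ f_2 $ at $ a_2 , a_2 \pm k $, so we need these arguments to appear with the right multiplicities.

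On one side we want $ f_1 ( a_1 ) $ twice together with $ f_2 ( a_2 - k ) f_2 ( a_2 + k ) $. So we require
\begin{align}
  u + v = a_1 , \qquad u' + v' = a_1 , \qquad u - v = a_2 - k , \qquad u' - v' = a_2 + k .
\end{align}
Solving gives
\begin{align}
  u = \frac{ a_1 + a_2 - k }{ 2 } , \qquad v = \frac{ a_1 - a_2 + k }{ 2 } , \qquad u' = \frac{ a_1 + a_2 + k }{ 2 } , \qquad v' = \frac{ a_1 - a_2 - k }{ 2 } .
\end{align}
All four numerators are congruent to $ a_1 + a_2 + k $ modulo $ 2 $. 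By \eqref{eq:six-point-equation-assume} they are even, so $ u , v , u' , v' \in \mathbb{ Z } $. This is the only point where the parity hypothesis enters, and it is the only real constraint in the argument.

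Next we compute the right-hand side of \eqref{eq:complex-Kac-Bernstein} with these values:
\begin{align}
  u + v' &= a_1 - k , & u - v' &= a_2 , \\
  u' + v &= a_1 + k , & u' - v &= a_2 .
\end{align}
The right-hand side is therefore $ f_1 ( a_1 - k ) f_2 ( a_2 ) f_1 ( a_1 + k ) f_2 ( a_2 ) $. The left-hand side is $ f_1 ( a_1 ) f_2 ( a_2 - k ) f_1 ( a_1 ) f_2 ( a_2 + k ) $.

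Setting the two sides equal gives exactly \eqref{eq:Z-characterize-six-point-equation}. There is no real obstacle beyond verifying these linear identities and the integrality of the solution, which follows from the parity check above.
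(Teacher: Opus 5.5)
Your proposal is correct and is essentially the paper's proof. The paper substitutes $u=(a_1+a_2+k)/2$, $v=(a_1-a_2+k)/2$, $u'=(a_1+a_2-k)/2$, $v'=(a_1-a_2-k)/2$, which differs from your choice only by interchanging $u$ and $u'$ (and so swapping the two sides of the functional equation), and integrality follows from the same parity hypothesis.
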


\begin{proof}

Since
\begin{align}
  u \coloneqq \frac{ a_1 + a_2 + k }{ 2 }, & &
  v \coloneqq \frac{ a_1 - a_2 + k }{ 2 }, & &
  u' \coloneqq \frac{ a_1 + a_2 - k }{ 2 }, & &
  v' \coloneqq \frac{ a_1 - a_2 - k }{ 2 } & &
\end{align}
are all integers by \eqref{eq:six-point-equation-assume}, we obtain \eqref{eq:Z-characterize-six-point-equation} by substituting these integers $ u $, $ v $, $ u' $, and $ v' $ into \eqref{eq:complex-Kac-Bernstein}.
\end{proof}

By Lemma \ref{lem:six-point-equation}, the supports of functions with \eqref{eq:complex-Kac-Bernstein} satisfy the following conditions:

\begin{lemma}
   \label{lem:supp-arithmetic-progression}

Let $ f_1 $ and $ f_2 $ be as in Lemma \ref{lem:six-point-equation}, and
\begin{align}
    S_1 ( a , j ) \coloneqq \supp f_1 \cap ( a + j \mathbb{ Z } ), & &
    S_2 ( a , j ) \coloneqq \supp f_2 \cap ( a + j \mathbb{ Z } ) \label{eq:supp-arithmetic-progression-S-define}
\end{align}
for $ a , j \in \mathbb{ Z } $.
Suppose $ a_2 , k \in \mathbb{ Z } $ satisfy $ a_2 \pm k \in \supp f_2 $ and let $ a_1 \in S_1 ( a_2 + k , 2 ) $.

\begin{enumerate}
    \item \label{item:supp-arithmetic-progression-supp}

    One has
    \begin{align}
        a_1 \pm k \in S_1 ( a_2 , 2 ), & &
        a_2 \in S_2 ( a_1 + k , 2 ). \label{eq:supp-arithmetic-progression-supp-include}
    \end{align}

\item \label{item:supp-arithmetic-progression-main}

Furthermore, if $ k \in 2 \mathbb{ Z } $ holds, then one has
\begin{align}
    a_1 + ( k / 2 ) \mathbb{ Z } = S_1 ( a_1 , k / 2 ), & &
    a_2 + ( k / 2 ) \mathbb{ Z } = S_2 ( a_2 , k / 2 ).
\end{align}

\end{enumerate}

\end{lemma}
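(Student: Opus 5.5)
The plan is to derive both parts from Lemma \ref{lem:six-point-equation}. The two moves are reading off nonvanishing from the identity \eqref{eq:Z-characterize-six-point-equation}, and using the symmetry of \eqref{eq:complex-Kac-Bernstein} under exchanging $ f_1 $ and $ f_2 $.

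For \ref{item:supp-arithmetic-progression-supp}, since $ a_1 \in a_2 + k + 2 \mathbb{ Z } $ we have $ a_1 + a_2 + k \in 2 \mathbb{ Z } $, so \eqref{eq:six-point-equation-assume} holds. Lemma \ref{lem:six-point-equation} then gives
\begin{align}
f_1 ( a_1 )^2 f_2 ( a_2 - k ) f_2 ( a_2 + k ) = f_2 ( a_2 )^2 f_1 ( a_1 - k ) f_1 ( a_1 + k ).
\end{align}
By hypothesis the left-hand side is nonzero. Hence $ f_2 ( a_2 ) $ and $ f_1 ( a_1 \pm k ) $ are all nonzero. The parity statements are immediate: $ a_1 \pm k \equiv a_2 + k \pm k \equiv a_2 \pmod 2 $ and $ a_2 \equiv a_1 + k \pmod 2 $. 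This gives \eqref{eq:supp-arithmetic-progression-supp-include}.

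Next I would record the symmetry. Replacing $ ( f_1 , f_2 ) $ by $ ( f_2 , f_1 ) $ and $ ( v , v' ) $ by $ ( - v , - v' ) $ turns \eqref{eq:complex-Kac-Bernstein} into itself, so the pair $ ( f_2 , f_1 ) $ is again a solution. Consequently \ref{item:supp-arithmetic-progression-supp} also holds with the roles of $ f_1 , f_2 $ exchanged. Explicitly: if $ b \pm k \in \supp f_1 $ and $ c \in S_2 ( b + k , 2 ) $, then $ c \pm k \in \supp f_2 $ and $ b \in \supp f_1 $.

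For \ref{item:supp-arithmetic-progression-main}, assume $ k $ is even. Then $ a_1 \equiv a_2 \pmod 2 $, and the parity class $ a_2 + k + 2 \mathbb{ Z } $ equals $ a_2 + 2 \mathbb{ Z } $, which makes the parity hypotheses easy to maintain. The argument has three steps.
\begin{enumerate}
\item \emph{Integer multiples of $ k $.} Apply \ref{item:supp-arithmetic-progression-supp} to $ a_1 $ and then inductively to $ a_1 + n k $, each of which lies in $ a_2 + k + 2 \mathbb{ Z } $. This gives $ a_1 + k \mathbb{ Z } \subset \supp f_1 $.
\item \emph{The same for $ f_2 $.} Apply the swapped version with centre $ b = a_1 $ (now $ a_1 \pm k \in \supp f_1 $) to the points $ c = a_2 + n k $, which have the correct parity. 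Induction gives $ a_2 + k \mathbb{ Z } \subset \supp f_2 $.
\item \emph{Half-steps.} For each $ n $, use the two consecutive points $ a_1 + n k $ and $ a_1 + ( n + 1 ) k $ of $ \supp f_1 $. Write them as $ b \pm k / 2 $ with $ b = a_1 + n k + k / 2 $. Apply the swapped version with step $ k / 2 $ and $ c = a_2 $; the required parity is $ c \equiv b + k / 2 = a_1 + ( n + 1 ) k \equiv a_2 \pmod 2 $, which holds. This yields $ a_1 + n k + k / 2 \in \supp f_1 $ and $ a_2 \pm k / 2 \in \supp f_2 $. Running the same argument with $ c = a_2 + m k $ gives $ a_2 + m k \pm k / 2 \in \supp f_2 $ for all $ m $.
\end{enumerate}
Together these show that $ a_1 + ( k / 2 ) \mathbb{ Z } \subset \supp f_1 $ and $ a_2 + ( k / 2 ) \mathbb{ Z } \subset \supp f_2 $, which is exactly the claimed equalities $ S_1 ( a_1 , k / 2 ) = a_1 + ( k / 2 ) \mathbb{ Z } $ and $ S_2 ( a_2 , k / 2 ) = a_2 + ( k / 2 ) \mathbb{ Z } $.

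I expect the main obstacle to be this half-step refinement. It requires choosing the right triple each time: a midpoint of two support points at distance $ k $, used with step $ k / 2 $. It also requires checking the parity condition at every application, noting that $ k / 2 $ itself may be odd, so that only the evenness of $ k $ is ever used.
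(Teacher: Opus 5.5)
Your proposal is correct and follows essentially the same route as the paper: part (1) is read off from the nonvanishing of the six-point identity, and part (2) combines induction along $a_1 + k\mathbb{Z}$ with the midpoint trick (applying part (1) to the swapped pair $(f_2, f_1)$ with step $k/2$ and $c = a_2$), exactly as the paper does. The differences are cosmetic. You justify the $f_1 \leftrightarrow f_2$ symmetry explicitly via $(v, v') \mapsto (-v, -v')$, which the paper uses tacitly, and you write out the $f_2$ half of part (2) directly rather than appealing to symmetry.
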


\begin{proof}

\begin{enumerate}
    \item 
    Since \eqref{eq:six-point-equation-assume} holds by $ a_1 \in S_1 ( a_2 + k , 2 ) $, we have \eqref{eq:Z-characterize-six-point-equation} by Lemma \ref{lem:six-point-equation}.
    Thus, it follows from
    \begin{align}
        a_1 \in S_1 ( a_2 + k , 2 ) \subset \supp f_1 , & & a_2 \pm k \in \supp f_2
    \end{align}
    that
    \begin{align}
        0
        \neq f_1 ( a_1 )^2 f_2 ( a_2 - k ) f_2 ( a_2 + k )
        = f_2 ( a_2 )^2 f_1 ( a_1 - k ) f_1 ( a_1 + k )
    \end{align}
    and hence we obtain \eqref{eq:supp-arithmetic-progression-supp-include} by \eqref{eq:six-point-equation-assume}.

    \item
It suffices to show
\begin{align}
    a_1 + ( k / 2 ) \mathbb{ Z } \subset \supp f_1, & &
    a_2 + ( k / 2 ) \mathbb{ Z } \subset \supp f_2
\end{align}
by \eqref{eq:supp-arithmetic-progression-S-define}.

First, we prove $ a_1 \pm k L \in \supp f_1 $ for any $ L = 0 , 1 , 2 , \cdots $ by induction.
When $ L = 0 $, we have
\begin{align}
     a_1 \pm k L = a_1 \in S_1 ( a_2 + k , 2 ) \subset \supp f_1.
\end{align}
Here, for any $ L = 0 , 1 , 2 , \cdots $, it suffices to show $ a_1 \pm k ( L + 1 ) \in \supp f_1 $ if $ a_1 \pm k L \in \supp f_1 $ holds.
Since
\begin{align}
    a_1 \pm k L \in a_1 + 2 \mathbb{ Z } = a_2 + k + 2 \mathbb{ Z }
\end{align}
holds by $ k \in 2 \mathbb{ Z } $ and \eqref{eq:six-point-equation-assume}, we have $ a_1 \pm k L \in S_1 ( a_2 + k , 2 ) $ by $ a_1 \pm k L \in \supp f_1 $.
Thus, we have
\begin{align}
    a_1 \pm k ( L + 1 ) \in S_1 ( a_2 , 2 ) \subset \supp f_1
\end{align}
by replacing $ a_1 $ with $ a_1 \pm k L $ and applying \ref{item:supp-arithmetic-progression-supp}.
Therefore, we obtain $ a_1 + k \mathbb{ Z } \subset \supp f_1 $.

Next, we prove $ a_1 + ( k / 2 ) \mathbb{ Z } \subset \supp f_1 $.
It suffices to show $ a_1 + ( L' + 1 / 2 ) k \in \supp f_1 $ for any $ L' \in \mathbb{ Z } $ by $ a_1 \pm k \mathbb{ Z } \subset \supp f_1 $.
We have $ a_1 + k L' , a_1 + k ( L' + 1 ) \in \supp f_1 $ by $ a_1 \pm k \mathbb{ Z } \subset \supp f_1 $, and 
\begin{align}
    a_2 \in S_2 ( a_1 + k , 2 ) = S_2 ( a_1 + ( L' + 1 ) k , 2 )
\end{align}
by $ k \in 2 \mathbb{ Z } $.
Thus, it follows from applying \ref{item:supp-arithmetic-progression-supp} with $ (f_1 , f_2 , a_1 , a_2 , k) $ replaced by
$ (f_2 , f_1 , a_2 , a_1 + k ( L' + 1 / 2 ) , k / 2 ) $ that
\begin{align}
    a_1 + ( L' + 1 / 2 ) k \in S_1 ( a_2 + k / 2 , 2 ) \subset \supp f_1.
\end{align}
Thus, we obtain $ a_1 + ( k / 2 ) \mathbb{ Z } \subset \supp f_1 $.

In addition, we have
\begin{align}
    a_1 \pm k \in \supp f_1, & & a_2 \in S_2 ( a_1 + k , 2 )
\end{align}
and hence $ a_2 + ( k / 2 ) \mathbb{ Z } \subset \supp f_2 $ by replacing $ f_1 $ and $ a_1 $ with $ f_2 $ and $ a_2 $, respectively.
\qedhere

\end{enumerate}
    
\end{proof}

Next, we confirm that the condition \ref{item:Z-characterize-Gauss} of Theorem \ref{thm:Z-characterize} satisfies \eqref{eq:complex-Kac-Bernstein} when $ k = 1 $:

\begin{lemma}
    \label{lem:Gauss-Kac-Bernstein-observe}

For any $ c_1 , c_2 , c_3 , c_4 , c_5 , c_6 \in \mathbb{ C } $, the functions $ f_1 , f_2 \colon \mathbb{ Z } \to \mathbb{ C } $ defined as
\begin{align}
  f_1 ( u )
  = e^{ c_1 u^2 + c_2 ( - 1 )^u + c_3 u + c_4 }, & &
  f_2 ( u )
  = e^{ c_1 u^2 - c_2 ( - 1 )^u + c_5 u + c_6 } \label{eq:supp-all-Gauss-represent}
\end{align}
satisfy \eqref{eq:complex-Kac-Bernstein} for any $ u , u' , v , v' \in \mathbb{ Z } $.

\end{lemma}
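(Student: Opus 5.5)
Since every value of $ f_1 $ and $ f_2 $ in \eqref{eq:supp-all-Gauss-represent} is a nonzero exponential, both sides of \eqref{eq:complex-Kac-Bernstein} are exponentials. It therefore suffices to show that the exponents agree. Concretely, write $ f_1 = e^{ \phi_1 } $ and $ f_2 = e^{ \phi_2 } $, with
\begin{align}
    \phi_1 ( u ) = c_1 u^2 + c_2 ( - 1 )^u + c_3 u + c_4, & &
    \phi_2 ( u ) = c_1 u^2 - c_2 ( - 1 )^u + c_5 u + c_6 .
\end{align}
The goal is to check that $ \Phi ( u , v ) \coloneqq \phi_1 ( u + v ) + \phi_2 ( u - v ) $ satisfies
\begin{align}
    \Phi ( u , v ) + \Phi ( u' , v' ) = \Phi ( u , v' ) + \Phi ( u' , v )
\end{align}
for all $ u , v , u' , v' \in \mathbb{ Z } $. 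This identity holds precisely when $ \Phi $ splits as $ \Phi ( u , v ) = A ( u ) + B ( v ) $, since then both sides equal $ A ( u ) + A ( u' ) + B ( v ) + B ( v' ) $.

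I will verify the splitting one term at a time, using linearity.

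The quadratic terms give $ c_1 ( ( u + v )^2 + ( u - v )^2 ) = 2 c_1 u^2 + 2 c_1 v^2 $, which splits because the cross terms $ \pm 2 c_1 u v $ cancel. This cancellation is why the same coefficient $ c_1 $ must appear in both $ \phi_1 $ and $ \phi_2 $.

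The linear terms give $ c_3 ( u + v ) + c_5 ( u - v ) $, and the constants give $ c_4 + c_6 $; both split trivially.

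The parity terms give $ c_2 ( ( - 1 )^{ u + v } - ( - 1 )^{ u - v } ) $. Because $ u + v $ and $ u - v $ differ by the even integer $ 2 v $, they have the same parity, so $ ( - 1 )^{ u + v } = ( - 1 )^{ u - v } $ and this contribution vanishes identically. This is exactly why the sign of $ c_2 $ is opposite in $ f_1 $ and $ f_2 $.

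I do not expect a real obstacle here. The only points needing care are the parity observation and the remark that the reduction to exponents is harmless: equality of the exponents implies equality of the exponentials, with no branch issues arising in that direction.
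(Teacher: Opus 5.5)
Your proof is correct and follows essentially the same route as the paper: pass to the exponents, split by linearity into the quadratic, parity, and affine parts, and observe that the cross terms $\pm 2c_1uv$ cancel and the parity contributions vanish because $u+v$ and $u-v$ have the same parity. Your criterion $\Phi(u,v)=A(u)+B(v)$ is just a compact repackaging of the paper's case-by-case verification.
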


\begin{proof}
The functions $ g_1 , g_2 \colon \mathbb{ Z } \to \mathbb{ C } $ are defined as
\begin{align}
    g_1 ( u ) \coloneqq c_1 u^2 + c_2 ( - 1 )^u + c_3 u + c_4, & &
    g_2 ( u ) \coloneqq c_1 u^2 - c_2 ( - 1 )^u + c_5 u + c_6.
\end{align}
Then it suffices to show
\begin{align}
& \phantom{ { } = { } } g_1 ( u + v ) + g_2 ( u - v ) + g_1 ( u' + v' ) + g_2 ( u' - v' ) \\
& = g_1 ( u + v' ) + g_2 ( u - v' ) + g_1 ( u' + v ) + g_2 ( u' - v ).
\label{eq:Gauss-Kac-Bernstein-observe-proof-g}
\end{align}
By decomposing into components, it suffices to show the case when
\begin{align}
    ( g_1 ( u ) , g_2 ( u ) ) = ( u^2 , u^2 ) , \; ( ( - 1 )^{ u } , ( - 1 )^{ u + 1 } ) , \; ( c_3 u + c_4 , c_5 u + c_6 ).
\end{align}

\begin{enumerate}[label=(Case \arabic*)]
    \item
    
    When $ ( g_1 ( u ) , g_2 ( u ) ) = ( u^2 , u^2 ) $:

    We obtain
\begin{align}
& \phantom{ { } = { } } g_1 ( u + v ) + g_2 ( u - v ) + g_1 ( u' + v' ) + g_2 ( u' - v' ) \\
& = ( u + v )^2 + ( u - v )^2 + ( u' + v' )^2 + ( u' - v' )^2 \\
& = 2 ( u^2 + v^2 + {u'}^2 + {v'}^2 ) \\
& = g_1 ( u + v' ) + g_2 ( u - v' ) + g_1 ( u' + v ) + g_2 ( u' - v ).
\end{align}

\item

When $ ( g_1 ( u ) , g_2 ( u ) ) = ( ( - 1 )^{ u } , ( - 1 )^{ u + 1 } ) $:

We have
\begin{align}
    g_1 ( u + v ) + g_2 ( u - v )
    = ( - 1 )^{ u + v } + ( - 1 )^{ u - v + 1 }
    = 0
\end{align}
by
\begin{align}
    ( u + v ) + ( u - v + 1 ) = 2 u + 1 \in 1 + 2 \mathbb{ Z }.
\end{align}
Similarly, we have
\begin{align}
    g_1 ( u' + v' ) + g_2 ( u' - v' )
    = g_1 ( u + v' ) + g_2 ( u - v' )
    = g_1 ( u' + v ) + g_2 ( u' - v )
    = 0
\end{align}
and hence
\begin{align}
    & \phantom{ { } = { } } g_1 ( u + v ) + g_2 ( u - v ) + g_1 ( u' + v' ) + g_2 ( u' - v' ) \\
    & = 0 \\
    & = g_1 ( u + v' ) + g_2 ( u - v' ) + g_1 ( u' + v ) + g_2 ( u' - v ).
\end{align}

\item
When $ ( g_1 ( u ) , g_2 ( u ) ) = ( c_3 u + c_4 , c_5 u + c_6 ) $:

Since
\begin{align}
g_1 ( u + v ) + g_1 ( u' + v' )
& = c_3 ( u + v + u' + v' ) + 2 c_4
= g_1 ( u + v' )  + g_1 ( u' + v ), \\
g_2 ( u - v ) + g_2 ( u' - v' )
& = c_5 ( u - v + u' - v' ) + 2 c_6
= g_2 ( u - v' )  + g_2 ( u' - v )
\end{align}
hold, we have \eqref{eq:Gauss-Kac-Bernstein-observe-proof-g}.

\end{enumerate}

Thus, we obtain \eqref{eq:complex-Kac-Bernstein}.
\end{proof}

By using Lemmas \ref{lem:six-point-equation} and \ref{lem:Gauss-Kac-Bernstein-observe}, the values of two functions satisfying the complex Kac--Bernstein functional equation \eqref{eq:complex-Kac-Bernstein} are identified as follows:

\begin{lemma}
  \label{lem:supp-all}

For any $ q_1 , q_2 , q_3 , q_4 , q_5 , q_6 \in \mathbb{ C } $, the following conditions on $ f_1 , f_2 \colon \mathbb{ Z } \to \mathbb{ C } $ are equivalent.

\begin{enumerate}
    \item \label{item:supp-all-Kac-Bernstein}

    One has \eqref{eq:complex-Kac-Bernstein} for any $ u , u' , v , v' \in \mathbb{ Z } $ and
    \begin{align}
    f_1 ( - 1 ) = e^{ q_1 } , & &
    f_1 ( 0 ) = e^{ q_2 } , & &
    f_1 ( 1 ) = e^{ q_3 } , & &
    f_2 ( - 1 ) = e^{ q_4 } , & &
    f_2 ( 0 ) = e^{ q_5 } , & &
    f_2 ( 1 ) = e^{ q_6 }. \label{eq:supp-all-Kac-Bernstein-exponent}
\end{align}

\item \label{item:supp-all-Gauss}

One has \eqref{eq:supp-all-Gauss-represent} for any $ u \in \mathbb{ Z } $ when $ c_1 , c_2 , c_3 , c_4 , c_5 , c_6 \in \mathbb{ C } $ are defined as
\begin{align}
    P \coloneqq \frac{ 1 }{ 8 }
    \begin{pmatrix}
    2 & - 4 & 2 & 2 & - 4 & 2 \\
    - 1 & 2 & - 1 & 1 & - 2 & 1 \\
    - 4 & 0 & 4 & 0 & 0 & 0 \\
    1 & 6 & 1 & - 1 & 2 & - 1 \\
    0 & 0 & 0 & - 4 & 0 & 4 \\
    - 1 & 2 & - 1 & 1 & 6 & 1
  \end{pmatrix}, & &
  \begin{pmatrix}
    c_1 \\
    c_2 \\
    c_3 \\
    c_4 \\
    c_5 \\
    c_6
  \end{pmatrix}
  \coloneqq P
  \begin{pmatrix}
    q_1 \\
    q_2 \\
    q_3 \\
    q_4 \\
    q_5 \\
    q_6
  \end{pmatrix}
  .
  \label{eq:supp-all-Gauss-const-def}
\end{align}
    
\end{enumerate}

\end{lemma}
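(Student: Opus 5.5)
We prove the two implications separately. For \ref{item:supp-all-Gauss} $\Longrightarrow$ \ref{item:supp-all-Kac-Bernstein}, Lemma \ref{lem:Gauss-Kac-Bernstein-observe} already gives \eqref{eq:complex-Kac-Bernstein} for the functions \eqref{eq:supp-all-Gauss-represent}. It therefore suffices to check that, with $c_1, \dots, c_6$ defined by \eqref{eq:supp-all-Gauss-const-def}, the values of \eqref{eq:supp-all-Gauss-represent} at $u = -1, 0, 1$ are $e^{q_1}, \dots, e^{q_6}$.

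This is a linear-algebra check. Write the exponents at $u=-1,0,1$:
\begin{align}
 g_1(-1) &= c_1 - c_2 - c_3 + c_4, & g_1(0) &= c_2 + c_4, & g_1(1) &= c_1 - c_2 + c_3 + c_4,\\
 g_2(-1) &= c_1 + c_2 - c_5 + c_6, & g_2(0) &= -c_2 + c_6, & g_2(1) &= c_1 + c_2 + c_5 + c_6.
\end{align}
These define a $6 \times 6$ matrix $Q$ with $(g_1(-1), \dots, g_2(1))^T = Q (c_1, \dots, c_6)^T$. We verify $QP = I$, so the exponents equal $q_1, \dots, q_6$ exactly; no $2\pi i$ ambiguity arises. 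For instance, the first row of $QP$ is $(P_1 - P_2 - P_3 + P_4)$, where $P_i$ denotes the $i$-th row of $P$, and this sums to $(8,0,0,0,0,0)/8$. The other rows are equally routine.

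For \ref{item:supp-all-Kac-Bernstein} $\Longrightarrow$ \ref{item:supp-all-Gauss}, let $\tilde f_1, \tilde f_2$ be the functions in \eqref{eq:supp-all-Gauss-represent} built from $c = Pq$. By the first part, they satisfy \eqref{eq:complex-Kac-Bernstein} and agree with $f_1, f_2$ at $-1, 0, 1$. We then show $f_i = \tilde f_i$ on all of $\mathbb{Z}$ by induction on $|u|$, using Lemma \ref{lem:six-point-equation} with $k = 1$ as a recursion. Taking $a_1 + a_2$ odd, \eqref{eq:Z-characterize-six-point-equation} reads
\begin{align}
 f_1(a_1)^2 f_2(a_2-1) f_2(a_2+1) = f_2(a_2)^2 f_1(a_1-1) f_1(a_1+1).
\end{align}
Suppose $f_1$ and $f_2$ are known and nonvanishing on $[-n, n]$ with $n \ge 1$.

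To extend $f_1$ to $n+1$, choose $a_1 = n$ and $a_2 \in \{0, \pm 1\}$ with $a_1 + a_2$ odd. All factors except $f_1(n+1)$ are known and nonzero, so
\begin{align}
 f_1(n+1) = \frac{f_1(n)^2 f_2(a_2-1) f_2(a_2+1)}{f_2(a_2)^2 f_1(n-1)}.
\end{align}
This shows $f_1(n+1) \ne 0$ and that it is uniquely determined; the argument for $-n-1$ is symmetric. Swapping roles, with $a_2 = \pm n$ and $a_1 \in \{0, \pm 1\}$ of opposite parity, determines $f_2(\pm(n+1))$ the same way.

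Since $\tilde f_i$ satisfy the same recursion with the same initial data, and the recursion expresses each new value uniquely through previously determined nonzero values, we get $f_i = \tilde f_i$.

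The main obstacle is bookkeeping: making sure the induction always uses indices $a_2 \pm 1$ or $a_1 \pm 1$ already in $[-n, n]$, which holds because $n \ge 1$, and carrying the nonvanishing along so that the divisions are legitimate. The matrix identity $QP = I$ is mechanical.
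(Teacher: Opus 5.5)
Your proposal is correct and follows essentially the same route as the paper. For (2)$\Rightarrow$(1) you use Lemma \ref{lem:Gauss-Kac-Bernstein-observe} together with the inverse of $P$; your $Q$ is exactly the paper's $P^{-1}$. For (1)$\Rightarrow$(2) you induct on $|u|$, using the $k=1$ case of Lemma \ref{lem:six-point-equation} to determine each new value uniquely. The only cosmetic difference is that the paper anchors the recursion at $a_2 = L-1$, next to the new point, whereas you anchor it in $\{0,\pm1\}$. Both choices keep every other index inside the already-determined range.
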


\begin{proof}

We prove $ \text{ \ref{item:supp-all-Kac-Bernstein} } \Longrightarrow \text{ \ref{item:supp-all-Gauss} } $ by induction on $ | u | $.

First, we show \eqref{eq:supp-all-Gauss-represent} when $ | u | = 0 , 1 $.
The matrix $ P $ is invertible, and we have
\begin{align}
    P^{ - 1 } =
    \begin{pmatrix}
    1 & - 1 & - 1 & 1 & 0 & 0 \\
    0 & 1 & 0 & 1 & 0 & 0 \\
    1 & - 1 & 1 & 1 & 0 & 0 \\
    1 & 1 & 0 & 0 & - 1 & 1 \\
    0 & - 1 & 0 & 0 & 0 & 1 \\
    1 & 1 & 0 & 0 & 1 & 1
  \end{pmatrix}
  .
\end{align}
Thus, it follows from \eqref{eq:supp-all-Gauss-const-def} that
\begin{align}
    \begin{pmatrix}
    q_1 \\
    q_2 \\
    q_3 \\
    q_4 \\
    q_5 \\
    q_6
  \end{pmatrix}
  = P^{ - 1 }
  \begin{pmatrix}
    c_1 \\
    c_2 \\
    c_3 \\
    c_4 \\
    c_5 \\
    c_6
  \end{pmatrix}
  =
  \begin{pmatrix}
    c_1 - c_2 - c_3 + c_4 \\
    c_2 + c_4 \\
    c_1 - c_2 + c_3 + c_4 \\
    c_1 + c_2 - c_5 + c_6 \\
    - c_2 + c_6 \\
    c_1 + c_2 + c_5 + c_6
  \end{pmatrix}
  \label{eq:supp-all-proof-inverse}
\end{align}
and hence we obtain \eqref{eq:supp-all-Gauss-represent} by \eqref{eq:supp-all-Kac-Bernstein-exponent}.

Next, we assume that \eqref{eq:supp-all-Gauss-represent} holds for any $ | u | \leq L $, and we prove \eqref{eq:supp-all-Gauss-represent} for $ | u | = L + 1 $ (where $ L = 1 , 2 , \cdots $).
If necessary, by replacing $ f_1 ( u ) $ and $ f_2 ( u ) $ with $ f_1 ( - u ) $ and $ f_2 ( - u ) $, respectively, it suffices to show \eqref{eq:supp-all-Gauss-represent} when $ u = L + 1 $.
Then we have
\begin{align}
     f_1 ( L )^2 f_2 ( L - 2 ) f_2 ( L )
     & = f_2 ( L - 1 )^2 f_1 ( L - 1 ) f_1 ( L + 1 ) \label{eq:supp-all-proof-simple-equation}
\end{align}
by Lemma \ref{lem:six-point-equation}.
Since $ | L - 2 | , | L - 1 | , | L | \leq L $ hold by $ L \geq 1 $, the value of $ f_1 ( L + 1 ) $ satisfying \eqref{eq:supp-all-proof-simple-equation} is unique by the induction hypothesis.
Thus, we have $ f_1 ( u ) = e^{ c_1 u^2 + c_2 ( - 1 )^u + c_3 u + c_4 } $ for $ u = L + 1 $ by Lemma \ref{lem:Gauss-Kac-Bernstein-observe}.
Similarly, the equation $ f_2 ( u ) = e^{ c_1 u^2 - c_2 ( - 1 )^u + c_5 u + c_6 } $ holds and hence we have \eqref{eq:supp-all-Gauss-represent}.
Then we obtain \ref{item:supp-all-Gauss}.

Finally, we prove $ \text{ \ref{item:supp-all-Gauss} } \Longrightarrow \text{ \ref{item:supp-all-Kac-Bernstein} } $.
We have \eqref{eq:complex-Kac-Bernstein} for any $ u , v , u' , v' \in \mathbb{ Z } $ by Lemma \ref{lem:Gauss-Kac-Bernstein-observe}.
In addition, we have \eqref{eq:supp-all-proof-inverse} by \eqref{eq:supp-all-Gauss-const-def} and hence \eqref{eq:supp-all-Kac-Bernstein-exponent} holds.
Then we obtain \ref{item:supp-all-Kac-Bernstein}.
\end{proof}

The following lemma implies that Lemmas \ref{lem:supp-arithmetic-progression} and \ref{lem:supp-all} are key lemmas of Theorem \ref{thm:Z-characterize}:

\begin{lemma}
    \label{lem:scaling}

Suppose $ a_1 , a_2 , j \in \mathbb{ Z } $ satisfy $ a_1 + a_2 \in 2 \mathbb{ Z } $, and let $ f_1 $ and $ f_2 $ be as in Lemma \ref{lem:six-point-equation}.
Then two functions $ p_1 , p_2 \colon \mathbb{ Z } \to \mathbb{ C } $ defined as 
\begin{align}
    p_1 ( u ) \coloneqq f_1 ( j u + a_1 ), & &
    p_2 ( u ) \coloneqq f_2 ( j u + a_2 )
\end{align}
satisfy the complex Kac--Bernstein functional equation 
\begin{align}
  & \phantom{ {} = {} }  p_1 ( u + v ) p_2 ( u - v ) p_1 ( u' + v' ) p_2 ( u' - v' ) \\
  & = p_1 ( u + v' ) p_2 ( u - v' ) p_1 ( u' + v ) p_2 ( u' - v ) \label{eq:scaling-main}
\end{align}
for any $ u , u' , v , v' \in \mathbb{ Z } $.
    
\end{lemma}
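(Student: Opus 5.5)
The plan is to reduce \eqref{eq:scaling-main} directly to \eqref{eq:complex-Kac-Bernstein} for $ f_1 , f_2 $ by a change of variables. The key point is that the arguments $ j ( u + v ) + a_1 $ and $ j ( u - v ) + a_2 $ should themselves have the form $ U + V $ and $ U - V $ for integers $ U , V $. The parity condition $ a_1 + a_2 \in 2 \mathbb{ Z } $ guarantees exactly this.

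First define the integers
\begin{align}
    b \coloneqq \frac{ a_1 + a_2 }{ 2 }, & &
    d \coloneqq \frac{ a_1 - a_2 }{ 2 },
\end{align}
which lie in $ \mathbb{ Z } $ since $ a_1 + a_2 $ and $ a_1 - a_2 = ( a_1 + a_2 ) - 2 a_2 $ are both even. Then $ a_1 = b + d $ and $ a_2 = b - d $.

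Next, given $ u , v , u' , v' \in \mathbb{ Z } $, set
\begin{align}
    U \coloneqq j u + b, & &
    V \coloneqq j v + d, & &
    U' \coloneqq j u' + b, & &
    V' \coloneqq j v' + d ,
\end{align}
all of which are integers. A direct check gives $ j ( u + v ) + a_1 = U + V $ and $ j ( u - v ) + a_2 = U - V $. The same identities hold for every other pairing, for instance $ j ( u + v' ) + a_1 = U + V' $ and $ j ( u' - v ) + a_2 = U' - V $.

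Consequently each factor in \eqref{eq:scaling-main} becomes the corresponding factor of \eqref{eq:complex-Kac-Bernstein}: for example $ p_1 ( u + v ) = f_1 ( U + V ) $ and $ p_2 ( u - v ) = f_2 ( U - V ) $. Equation \eqref{eq:scaling-main} is therefore literally \eqref{eq:complex-Kac-Bernstein} evaluated at $ ( U , U' , V , V' ) $, which holds by hypothesis.

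There is no real obstacle. The only point needing care is the integrality of $ b $ and $ d $, and this is exactly where $ a_1 + a_2 \in 2 \mathbb{ Z } $ is used. The argument does not require $ j \neq 0 $, so $ j = 0 $ is covered as well.
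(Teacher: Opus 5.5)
Your proposal is correct and is essentially the paper's proof: the paper defines exactly the same integers $\tilde u = ju + \frac{a_1+a_2}{2}$, $\tilde v = jv + \frac{a_1-a_2}{2}$ (and primed analogues), and then rewrites each factor of the equation for $p_1, p_2$ as the corresponding factor of the equation for $f_1, f_2$. Your explicit check that $\frac{a_1-a_2}{2}$ is also an integer and your remark that $j=0$ needs no special treatment are fine additions.
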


\begin{proof}

Since $ a_1 + a_2 \in 2 \mathbb{ Z } $, the values
\begin{align}
    \Tilde{ u } \coloneqq j u + \frac{ a_1 + a_2 }{ 2 }, & &
    \Tilde{ v } \coloneqq j v + \frac{ a_1 - a_2 }{ 2 }, & &
    \Tilde{ u }' \coloneqq j u' + \frac{ a_1 + a_2 }{ 2 }, & &
    \Tilde{ v }' \coloneqq j v' + \frac{ a_1 - a_2 }{ 2 }
\end{align}
are all integers.
We have
\begin{align}
    p_1 ( u + v ) p_2 ( u - v ) p_1 ( u' + v' ) p_2 ( u' - v' )
    & = f_1 ( \Tilde{ u } + \Tilde{ v } ) f_2 ( \Tilde{ u } - \Tilde{ v } ) f_1 ( \Tilde{ u }' + \Tilde{ v }' ) f_2 ( \Tilde{ u }' - \Tilde{ v }' ), \\
    p_1 ( u + v' ) p_2 ( u - v' ) p_1 ( u' + v ) p_2 ( u' - v )
    & = f_1 ( \Tilde{ u } + \Tilde{ v }' ) f_2 ( \Tilde{ u } - \Tilde{ v }' ) f_1 ( \Tilde{ u }' + \Tilde{ v } ) f_2 ( \Tilde{ u }' - \Tilde{ v } )
\end{align}
by the definition of $ p_1 $ and $ p_2 $ and hence \eqref{eq:scaling-main} holds by \eqref{eq:complex-Kac-Bernstein}.
\end{proof}

If there exists $ a_1 \in \supp f_1 $ such that $ S_2 ( a_1 , 2 ) $ is sufficiently large, then one of the conditions of Theorem \ref{thm:Z-characterize} holds as follows:

\begin{lemma}
    \label{lem:supp-large-Gauss}

Let $ f_1$, $ f_2 $, $ S_1$, and $ S_2 $ be as in Lemma \ref{lem:supp-arithmetic-progression}, and $ a_1 \in \supp f_1 $.

\begin{enumerate}
    \item \label{item:supp-large-Gauss-large}

If $ \# S_2 ( a_1 , 2 ) \geq 3 $, then the condition \ref{item:Z-characterize-Gauss} of Theorem \ref{thm:Z-characterize} holds.

\item \label{item:supp-large-Gauss-number-2}

If $ \# S_2 ( a_1 , 2 ) = 2 $, then the condition \ref{item:Z-characterize-six-point} of Theorem \ref{thm:Z-characterize} holds.
    
\end{enumerate}

\end{lemma}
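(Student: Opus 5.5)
The plan is to work entirely through Lemmas \ref{lem:six-point-equation}, \ref{lem:supp-arithmetic-progression}, \ref{lem:supp-all} and \ref{lem:scaling}. The basic move is the same in both parts. Given two distinct points $ b < b' $ of $ S_2 ( a_1 , 2 ) $, put $ a_2 \coloneqq ( b + b' ) / 2 $ and $ k \coloneqq ( b' - b ) / 2 $; these are integers because $ b \equiv b' \pmod 2 $. Then $ a_2 \pm k \in \supp f_2 $, and $ a_1 \in S_1 ( a_2 + k , 2 ) $ since $ b' \equiv a_1 \pmod 2 $. Lemma \ref{lem:supp-arithmetic-progression} \ref{item:supp-arithmetic-progression-supp} gives $ a_1 \pm k \in \supp f_1 $ and $ a_2 \in \supp f_2 $. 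If $ k $ is even, Lemma \ref{lem:supp-arithmetic-progression} \ref{item:supp-arithmetic-progression-main} gives the full progressions $ a_1 + ( k / 2 ) \mathbb{ Z } \subset \supp f_1 $ and $ a_2 + ( k / 2 ) \mathbb{ Z } \subset \supp f_2 $.

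For \ref{item:supp-large-Gauss-large}, take three points $ b_1 < b_2 < b_3 $ of $ S_2 ( a_1 , 2 ) $. The half-differences $ x = ( b_2 - b_1 ) / 2 $, $ y = ( b_3 - b_2 ) / 2 $ and $ x + y $ cannot all be odd, so some pair has even $ k $. This gives infinite progressions in both supports.

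Next I descend. Let $ m \geq 1 $ be minimal such that $ \supp f_2 \supset b + m \mathbb{ Z } $ for some $ b \equiv a_1 \pmod 2 $, with $ a_1' + m \mathbb{ Z } \subset \supp f_1 $ for suitable $ a_1' $. If $ m $ were even, apply the basic move to $ b \pm m $ with the point $ a_1' $ of matching parity; this yields step $ m / 2 $, contradicting minimality. So $ m = k $ is odd. Arranging $ a_1 + a_2 \in 2 \mathbb{ Z } $ (possible since the progressions contain both parities when $ k $ is odd), Lemma \ref{lem:scaling} with $ j = k $ gives functions $ p_1 , p_2 $ that are nowhere zero and satisfy \eqref{eq:complex-Kac-Bernstein}. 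Lemma \ref{lem:supp-all} then writes $ p_1 , p_2 $ in the form \eqref{eq:supp-all-Gauss-represent}. Substituting $ u = ( u' - a_i ) / k $ turns a quadratic-plus-$ ( - 1 )^u $ expression into one of the same shape in $ u' $, because $ k $ is odd so $ ( - 1 )^{ ( u' - a_i ) / k } = \pm ( - 1 )^{ u' } $ up to absorbed constants. This produces the formulas of condition \ref{item:Z-characterize-Gauss} on $ a_i + k \mathbb{ Z } $.

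The step I expect to be the main obstacle is showing that $ f_1 $ and $ f_2 $ vanish off these progressions. Suppose $ c \in \supp f_2 \setminus ( a_2 + k \mathbb{ Z } ) $. Pair $ c $ with a point $ c' \in a_2 + k \mathbb{ Z } $ of the same parity; there are infinitely many choices of $ c' $, so the half-difference $ ( c - c' ) / 2 $ can be chosen even. The basic move then produces a progression in $ \supp f_2 $ of step $ ( c - c' ) / 4 $ through a point of $ a_2 + k \mathbb{ Z } $. Combining it with $ a_2 + k \mathbb{ Z } $ by repeating the move, I expect to reach step $ \gcd $ of the two steps, which is smaller than $ k $ and contradicts minimality. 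The argument for $ \supp f_1 $ is symmetric, using points of $ \supp f_1 $ paired against $ \supp f_2 $.

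For \ref{item:supp-large-Gauss-number-2}, write $ S_2 ( a_1 , 2 ) = \{ b , b' \} $ and apply the basic move. If $ k $ were even we would get an infinite progression inside $ S_2 ( a_1 , 2 ) $, a contradiction, so $ k $ is odd. Then $ a_2 = b + k $ has parity opposite to $ a_1 $, so $ a_1 + a_2 $ is odd, and $ \supp f_2 \ni a_2 , a_2 \pm k $, $ \supp f_1 \ni a_1 \pm k $. Equation \eqref{eq:Z-characterize-six-point-equation} follows from Lemma \ref{lem:six-point-equation}, since $ a_1 + a_2 + k $ is even.

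It remains to prove the containments \eqref{eq:Z-characterize-six-point-support}. The parity class $ a_1 + 2 \mathbb{ Z } $ of $ \supp f_2 $ is already exactly $ \{ a_2 \pm k \} $. Any extra point of $ \supp f_2 $ in the class $ a_2 + 2 \mathbb{ Z } $, or any extra point of $ \supp f_1 $, would be fed back through Lemma \ref{lem:supp-arithmetic-progression} \ref{item:supp-arithmetic-progression-supp}, with the roles of $ f_1 $ and $ f_2 $ exchanged as needed. I expect this to force a third point into $ S_2 ( a_1 , 2 ) $, or an even-step pair and hence infinitely many points, contradicting $ \# S_2 ( a_1 , 2 ) = 2 $.
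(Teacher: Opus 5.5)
Your plan is correct and is essentially the paper's proof. It uses the same steps: an even-step pair among three points of $S_2(a_1,2)$; descent via Lemma~\ref{lem:supp-arithmetic-progression}~\ref{item:supp-arithmetic-progression-main} to a minimal odd step; Lemmas~\ref{lem:scaling} and~\ref{lem:supp-all}; and, in part~\ref{item:supp-large-Gauss-number-2}, feeding extra points back through Lemma~\ref{lem:supp-arithmetic-progression}~\ref{item:supp-arithmetic-progression-supp}. One small fix: in the even-$m$ descent you should use $a_1$ itself, which has the parity of $b$, rather than $a_1'$; the paper builds this parity matching into the definition of $j_0$.

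The steps you left as expectations do close. With $s=(c-c')/4$, one further move on points of $c'+k\mathbb{Z}$ and $c'+s\mathbb{Z}$ at distance $4\gcd(k,s)$ (which exist by B\'ezout) gives the odd step $\gcd(k,s)<k$, because $k\nmid c-c'$. The paper avoids this second move by choosing the partner point so that the difference is $4j'$ with $0\le j'<k$. For extra points of $\supp f_1$, the paper simply applies part~\ref{item:supp-large-Gauss-large} to the swapped pair to get $\#S_1(a_2,2)\le 2$, and then repeats the $f_2$ argument symmetrically.
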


\begin{proof}

\begin{enumerate}
    \item 

    First, we prove that there exist $ a_2 \in \supp f_2 $ and $ j_0 \in 1 + 2 \mathbb{ Z } $ such that
    \begin{align}
        a_1 + j_0 \mathbb{ Z } = \supp f_1, & &
        a_2 + j_0 \mathbb{ Z } = \supp f_2. \label{eq:supp-large-Gauss-large-proof-supp}
    \end{align}
    Since we have
    \begin{align}
        \# S_2 ( a_1 , 4 ) + \# S_2 ( a_1 + 2 , 4 ) = \# S_2 ( a_1 , 2 ) \geq 3,
    \end{align}
    at least one of $ \# S_2 ( a_1 , 4 ) \geq 2 $ and $ \# S_2 ( a_1 + 2 , 4 ) \geq 2 $ holds.
Thus, there exist $ a_2 , j \in \mathbb{ Z } $ with $ a_2 \pm 2 j \in \supp f_2 $ such that $ a_1 \in S_1 ( a_2 \pm 2 j , 2 ) $ holds.
We have
\begin{align}
    a_1 + j \mathbb{ Z } = S_1 ( a_1 , j ) , & &
    a_2 + j \mathbb{ Z } = S_2 ( a_2 , j )
\end{align}
by Lemma \ref{lem:supp-arithmetic-progression} \ref{item:supp-arithmetic-progression-main} and hence
\begin{align}
    j_0 
    \coloneqq \min \left\{ j\in\mathbb Z_{\geq1} \; \middle| \;
  \begin{aligned}
  & \text{there exist } b_1,b_2\in\mathbb Z
  \text{ with } b_1+b_2\in2\mathbb Z,\\
  & b_1+j\mathbb Z=S_1(b_1,j), \quad
  b_2+j\mathbb Z=S_2(b_2,j)
  \end{aligned}
\right\} \label{eq:supp-large-Gauss-large-proof-j0-minimum}
\end{align}
can be defined.
We have $ j_0 \in 1 + 2 \mathbb{ Z } $ by Lemma \ref{lem:supp-arithmetic-progression} \ref{item:supp-arithmetic-progression-main} and the minimality of $ j_0 $.
There exist $ b_1 , b_2 \in \mathbb{ Z } $ such that
\begin{align}
    b_1 + j_0 \mathbb{ Z } = S_1 ( b_1 , j_0 ), & &
    b_2 + j_0 \mathbb{ Z } = S_2 ( b_2 , j_0 ) \label{eq:supp-large-Gauss-large-proof-b1-b2-define}
\end{align}
by \eqref{eq:supp-large-Gauss-large-proof-j0-minimum}.

Next, we prove $ b_1 + j_0 \mathbb{ Z } = \supp f_1 $.
Since
\begin{align}
    b_1 + j_0 \mathbb{ Z } = S_1 ( b_1 , j_0 ) \subset \supp f_1
\end{align}
holds by \eqref{eq:supp-arithmetic-progression-S-define}, it suffices to show $ b_1' \in b_1 + j_0 \mathbb{ Z } $ for any $ b_1' \in \supp f_1 $.
We may assume
\begin{align}
    b_1 + b_1' \in 2 \mathbb{ Z } , & &
    b_1 \leq b_1' < b_1 + 2 j_0
\end{align}
by replacing $ b_1 $ if necessary.
By $ b_1 + b_1' \in 2 \mathbb{ Z } $ and $ j_0 \in 1 + 2 \mathbb{ Z } $, there exists $ j' \in \mathbb{ Z } $ such that either $ b_1' - b_1 = 4 j' $ or $ b_1 + 2 j_0 - b_1' = 4 j' $ holds.
Then we have $ 0 \leq j' < j_0 $.
Since $ S_2 ( b_1' , 2 ) \cap \{ b_2 , b_2 + j_0 \} \neq \emptyset $ holds by \eqref{eq:supp-large-Gauss-large-proof-b1-b2-define} and $ j_0 \in 1 + 2 \mathbb{ Z } $, we have $ j' = 0 $ by Lemma \ref{lem:supp-arithmetic-progression} \ref{item:supp-arithmetic-progression-main} and the minimality of $ j_0 $.
Thus, we have $ b_1' = b_1 \in b_1 + j_0 \mathbb{ Z } $ and hence $ b_1 + j_0 \mathbb{ Z } = \supp f_1 $.
Similarly, we obtain $ a_2 + j_0 \mathbb{ Z } = \supp f_2 $ and hence \eqref{eq:supp-large-Gauss-large-proof-supp}.

Since $ j_0 \in 1 + 2 \mathbb{ Z } $ holds, we may assume $ a_1 + a_2 \in 2 \mathbb{ Z } $ by replacing $ a_2 $ with $ a_2 + j_0 $ if necessary.
We define functions $ p_1 , p_2 \colon \mathbb{ Z } \to \mathbb{ C } $ by
\begin{align}
p_1 ( u ) \coloneqq f_1 ( a_1 + u j_0 ), & &
p_2 ( u ) \coloneqq f_2 ( a_2 + u j_0 ).
\end{align}
By Lemma \ref{lem:scaling}, the functions $ p_1 $ and $ p_2 $ satisfy \eqref{eq:scaling-main}.
Since \eqref{eq:supp-large-Gauss-large-proof-supp} implies that
\begin{align}
\supp p_1 = \supp p_2 = \mathbb{ Z },
\end{align}
Lemma \ref{lem:supp-all} $ \text{ \ref{item:supp-all-Kac-Bernstein} } \Longrightarrow \text{ \ref{item:supp-all-Gauss} } $ shows that the condition \ref{item:Z-characterize-Gauss} of Theorem \ref{thm:Z-characterize} holds.

\item

There exist $ a_2 \in \mathbb{ Z } $ and $ k \in \mathbb{ Z } \setminus \{ 0 \} $ such that $ S_2 ( a_1 , 2 ) = \{ a_2 \pm k \} $ holds by $ \# S_2 ( a_1 , 2 ) = 2 $.
Then we have $ a_1 + a_2 + k \in 2 \mathbb{ Z } $ and hence \eqref{eq:Z-characterize-six-point-equation} by Lemma \ref{lem:six-point-equation}.
In addition, we get
\begin{align}
    a_1 \in S_1 ( a_2 + k , 2 ), & &\{ a_2 \pm k \} = S_2 ( a_1 , 2 ) \subset \supp f_2
\end{align}
and hence \eqref{eq:supp-arithmetic-progression-supp-include} by Lemma \ref{lem:supp-arithmetic-progression} \ref{item:supp-arithmetic-progression-supp}.
Since
\begin{align}
    a_2 \notin \{ a_2 \pm k \} = S_2 ( a_1 , 2 )
\end{align}
holds, we have $ a_1 + a_2 , k \in 1 + 2 \mathbb{ Z } $ by \eqref{eq:supp-arithmetic-progression-supp-include}.

We now prove $ \{ a_2 , a_2 \pm k \} = \supp f_2 $.
We have $ \{ a_2 , a_2 \pm k \} \subset \supp f_2 $ by $ S_2 ( a_1 , 2 ) = \{ a_2 \pm k \} $ and \eqref{eq:supp-arithmetic-progression-supp-include}.
Thus, it suffices to show $ a_2 = a_2' $ for any $ a_2' \in \supp f_2 $ with $ a_2' \neq a_2 \pm k $.
Since $ a_2' \in S_2 ( a_1 + k , 2 ) $ holds by $ k \in 1 + 2 \mathbb{ Z } $ and 
\begin{align}
    a_2' \notin \{ a_2 \pm k \} = S_2 ( a_1 , 2 ),
\end{align}
we have
\begin{align}
    a_2' \pm k \in S_2 ( a_1 , 2 ) = \{ a_2 \pm k \}
\end{align}
by \eqref{eq:supp-arithmetic-progression-supp-include} and Lemma \ref{lem:supp-arithmetic-progression} \ref{item:supp-arithmetic-progression-supp}.
Thus, we obtain $ a_2 = a_2' $.

Finally, it suffices to show $ \# S_1 ( a_2 , 2 ) = 2 $, since $ \{ a_1 , a_1 \pm k \} = \supp f_1 $ follows by a similar argument.
The condition \ref{item:Z-characterize-Gauss} of Theorem \ref{thm:Z-characterize} does not hold by $ \# S_2 ( a_1 , 2 ) = 2 $ and hence we have $ \# S_1 ( a_2 , 2 ) \leq 2 $ by Lemma \ref{lem:supp-large-Gauss} \ref{item:supp-large-Gauss-large}.
Since
\begin{align}
   \# S_1 ( a_2 , 2 ) \geq \# \{ a_1 \pm k \} = 2 
\end{align}
holds by $ S_2 ( a_1 , 2 ) = \{ a_2 \pm k \} $, we obtain $ \# S_1 ( a_2 , 2 ) = 2 $.
\qedhere

\end{enumerate}

\end{proof}

\subsection{Completion of the proof of Theorem \ref{thm:Z-characterize}}
\label{subsec:Z-characterize-proof-complete}

In this subsection, we complete the proof of Theorem \ref{thm:Z-characterize} by using Lemmas \ref{lem:six-point-equation} and \ref{lem:supp-large-Gauss}.

\begin{proof}[Proof of Theorem \ref{thm:Z-characterize}]

It suffices to show the conclusion when
\begin{align}
    ( \# S_1 ( a , 2 ) , \# S_2 ( a , 2 ) ) = ( 0 , * ) , ( * , 0 ) , ( 1 , 1 )
\end{align}
holds for $ a = 0 , 1 $ by Lemma \ref{lem:supp-large-Gauss}.
In this case, we prove that these conditions correspond to those of Theorem \ref{thm:Z-characterize}, as shown in Table \ref{tab:Z-characterize-table}.

\begin{table}[ht]
            \centering
            \caption{Correspondence table of the conditions of $ ( \# S_1 , \# S_2 ) $ and the conditions of Theorem \ref{thm:Z-characterize}}
            \label{tab:Z-characterize-table}

\vspace{8pt}
            
\begin{tabular}{c|c||c|c|c}
\multicolumn{2}{c||}{\multirow{2}{*}{ }} & \multicolumn{3}{c}{ $ ( \# S_1 ( 0 , 2 ) , \# S_2 ( 0 , 2 ) ) $ } \\
\cline{3-5}
\multicolumn{2}{c||}{ } & $ ( 0 , * ) $ & $ ( * , 0 ) $ & $ ( 1 , 1 ) $ \\
\hline \hline
\multirow{3}{*}{ $ ( \# S_1 ( 1 , 2 ) , \# S_2 ( 1 , 2 ) ) $ } & $ ( 0 , * ) $ & \ref{item:Z-characterize-f1-zero} & \ref{item:Z-characterize-independent} & \ref{item:Z-characterize-f1-one-point} \\
\cline{2-5}
& $ ( * , 0 ) $ & \ref{item:Z-characterize-independent} & \ref{item:Z-characterize-f2-zero} & \ref{item:Z-characterize-f2-one-point} \\
\cline{2-5}
& $ ( 1 , 1 ) $ & \ref{item:Z-characterize-f1-one-point} & \ref{item:Z-characterize-f2-one-point} & \ref{item:Z-characterize-six-point}
\end{tabular}
\end{table}

Except for the case of
\begin{align}
    ( \# S_1 ( 0 , 2 ) , \# S_2 ( 0 , 2 ) ) = ( \# S_1 ( 1 , 2 ) , \# S_2 ( 1 , 2 ) ) = ( 1 , 1 ), \label{eq:Z-characterize-proof-non-trivial}
\end{align}
it is clear from the definitions that the correspondence is as shown
in Table \ref{tab:Z-characterize-table}.

Thus, it suffices to show \ref{item:Z-characterize-six-point} of Theorem \ref{thm:Z-characterize} when \eqref{eq:Z-characterize-proof-non-trivial} holds.
In this case, there exist $ a_1, \xi \in 2 \mathbb{ Z } $ and $ a_2 , k \in 1 + 2 \mathbb{ Z } $ such that
\begin{align}
    S_1 ( 0 , 2 ) & = \{ a_1 \}, &
    S_1 ( 1 , 2 ) & = \{ a_1 + k \}, \\
    S_2 ( 0 , 2 ) & = \{ \xi \}, &
    S_2 ( 1 , 2 ) & = \{ a_2 \} \label{eq:Z-characterize-proof-represent}
\end{align}
hold.
Since
\begin{align}
    u \coloneqq \frac{ a_1 + \xi }{ 2 }, & &
    v \coloneqq \frac{ a_1 - \xi }{ 2 }, & &
    u' \coloneqq \frac{ a_1 + a_2 + k }{ 2 }, & &
    v' \coloneqq \frac{ a_1 - a_2 + k }{ 2 }
\end{align}
are all integers, we have
\begin{align}
  & \phantom{ { } = { } } f_1 ( u + v' ) f_2 ( u - v' ) f_1 ( u' + v ) f_2 ( u' - v ) \\
  & = f_1 ( u + v ) f_2 ( u - v ) f_1 ( u' + v' ) f_2 ( u' - v' ) \\
  & = f_1 ( a_1 ) f_2 ( \xi ) f_1 ( a_1 + k ) f_2 ( a_2 ) \label{eq:Z-characterize-proof-apply-Kac-Bernstein}
\end{align}
by \eqref{eq:complex-Kac-Bernstein}.
Since $ f_1 ( a_1 ) f_2 ( \xi ) f_1 ( a_1 + k ) f_2 ( a_2 ) \neq 0 $ holds by \eqref{eq:Z-characterize-proof-represent}, we have
\begin{align}
    a_1 + \frac{ \xi - a_2 + k }{ 2 } = u + v' \in \supp f_1 = \{ a_1 , a_1 + k \}
\end{align}
by \eqref{eq:Z-characterize-proof-apply-Kac-Bernstein}.
Thus, we get $ \xi = a_2 \pm k $ and hence \eqref{eq:Z-characterize-six-point-support} follows from \eqref{eq:Z-characterize-proof-represent}.
In addition, we have \eqref{eq:six-point-equation-assume} and hence \eqref{eq:Z-characterize-six-point-equation} by Lemma \ref{lem:six-point-equation}.
Thus, the condition \ref{item:Z-characterize-six-point} of Theorem \ref{thm:Z-characterize} holds.
\end{proof}

\section{Proof of Corollary \ref{cor:R-characterize}}
\label{sec:R-characterize-proof}

In this section, we prove Corollary \ref{cor:R-characterize} by using Theorem \ref{thm:Z-characterize}.

\begin{proof}[Proof of Corollary \ref{cor:R-characterize}]

It suffices to show \ref{item:R-characterize-Gauss} in Corollary \ref{cor:R-characterize} when $ f_1 , f_2 \not\equiv 0 $.
There exist $ N_1 , N_2 \in \mathbb{ Z } $ and $ \epsilon > 0 $ such that
\begin{align}
    f_1 ( ( N_1 + m ) \epsilon ) \neq 0, & &
    f_2 ( ( N_2 + m ) \epsilon ) \neq 0
\end{align}
hold for $ m = 0 , 1 , 2 , 3 $ by the continuity of $ f_1 $ and $ f_2 $.

Applying Theorem \ref{thm:Z-characterize} to the pair of functions $ u \mapsto f_1 ( u \epsilon ) $ and $ u \mapsto f_2 ( u \epsilon ) $, there exist $ d_1 , d_2 , d_3 , d_4 , d_5 \in \mathbb{ C } $ such that \eqref{eq:R-characterize-Gauss-equation} holds for any $ u \in 2 \epsilon \mathbb{ Z } $.
Similarly, for any $ M = 1 , 2 , 3 , \ldots $, we apply Theorem \ref{thm:Z-characterize} to the pair of functions $ u \mapsto f_1 ( u \epsilon / 2^M ) $ and $ u \mapsto f_2 ( u \epsilon / 2^M ) $.
By the uniqueness in Lemma \ref{lem:supp-all}, the same constants $ d_1 , d_2 , d_3 , d_4 , d_5 $ give \eqref{eq:R-characterize-Gauss-equation} for any $ u \in ( \epsilon / 2^{ M - 1 } ) \mathbb{ Z } $.

Since $ f_1 $ and $ f_2 $ are continuous, we have \eqref{eq:R-characterize-Gauss-equation} for any $ u \in \mathbb{ R } $.
Thus, the condition \ref{item:R-characterize-Gauss} of Corollary \ref{cor:R-characterize} holds.
\end{proof}

\section{Proof of the results in Sections \ref{sec:main-relate} and \ref{sec:Kac-Bernstein-probability}}
\label{sec:in-main-relate-proof}

In this section, we prove the results in Sections \ref{sec:main-relate} and \ref{sec:Kac-Bernstein-probability}.
In Subsections \ref{subsec:Kac-Bernstein-Fourier}, \ref{subsec:complex-T}, \ref{subsec:Kac-Bernstein-complex-R}, and \ref{subsec:Baryshnikov-Eisenberg-Stadje}, we prove Proposition \ref{prop:Kac-Bernstein-Fourier}, Corollary \ref{cor:complex-T}, Corollary \ref{cor:Kac-Bernstein-complex-R}, and Corollary \ref{cor:Baryshnikov-Eisenberg-Stadje}, respectively.

\subsection{Proof of Proposition \ref{prop:Kac-Bernstein-Fourier}}
\label{subsec:Kac-Bernstein-Fourier}

In this subsection, we prove Proposition \ref{prop:Kac-Bernstein-Fourier}.
First, we show the following lemma:

\begin{lemma}
    \label{lem:pushforward-Fourier}

Let $ X $ and $ F $ be as in Problem \ref{Q:Kac-Bernstein-complex}, and $ \mu_1 $ and $ \mu_2 $ be complex Borel measures on $ X $.
Then
\begin{align}
    ( F_* ( \mu_1 \times \mu_2 ) )^{ \hat{ } } ( u , v ) = \hat{ \mu_1 } ( u + v ) \hat{ \mu_2 } ( u - v )
\end{align}
holds for any $ u , v \in \hat{ X } $.

\end{lemma}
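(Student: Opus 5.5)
The plan is to compute the Fourier--Stieltjes transform of $ F_* ( \mu_1 \times \mu_2 ) $ directly from the definitions, using the change-of-variables formula for pushforward measures and Fubini's theorem for the finite product measure $ \mu_1 \times \mu_2 $. We identify the dual of $ X \times X $ with $ \hat{ X } \times \hat{ X } $, so that a character $ ( u , v ) $ acts by $ ( y_1 , y_2 ) \mapsto u ( y_1 ) v ( y_2 ) $. We write $ \langle x , u \rangle $ for the value of $ u \in \hat{ X } $ at $ x \in X $, and use the convention $ \hat{ \mu } ( u ) = \int_X \overline{ \langle x , u \rangle } \, d \mu ( x ) $; this matches the normalizations for $ \mathbb{ T } $ and $ \mathbb{ R } $ given earlier.

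The first step applies the pushforward formula $ \int \phi \, d ( F_* \lambda ) = \int \phi \circ F \, d \lambda $ with $ \lambda = \mu_1 \times \mu_2 $ and $ \phi ( y_1 , y_2 ) = \overline{ \langle y_1 , u \rangle \langle y_2 , v \rangle } $. This $ \phi $ is bounded and continuous, hence integrable against any complex Borel measure. The formula holds for indicator functions by the definition of $ F_* $, and it extends to bounded Borel functions by linearity and dominated convergence, applied to the total variation measure. This gives
\begin{align}
  ( F_* ( \mu_1 \times \mu_2 ) )^{ \hat{ } } ( u , v )
  = \int_{ X \times X } \overline{ \langle x_1 + x_2 , u \rangle \langle x_1 - x_2 , v \rangle } \, d ( \mu_1 \times \mu_2 ) ( x_1 , x_2 ) .
\end{align}

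The second step uses the homomorphism property of characters:
\begin{align}
  \langle x_1 + x_2 , u \rangle \langle x_1 - x_2 , v \rangle
  = \langle x_1 , u + v \rangle \langle x_2 , u - v \rangle ,
\end{align}
where the group operation in $ \hat{ X } $ is written additively. The integrand therefore factors as a function of $ x_1 $ times a function of $ x_2 $, each bounded by $ 1 $.

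The third step applies Fubini's theorem for complex measures. We may invoke it by writing $ \mu_i = h_i | \mu_i | $ with $ | h_i | = 1 $ and using Fubini for the finite positive measure $ | \mu_1 | \times | \mu_2 | $. The integral then splits as
\begin{align}
  \int_X \overline{ \langle x_1 , u + v \rangle } \, d \mu_1 ( x_1 ) \int_X \overline{ \langle x_2 , u - v \rangle } \, d \mu_2 ( x_2 )
  = \hat{ \mu_1 } ( u + v ) \hat{ \mu_2 } ( u - v ) .
\end{align}

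The only potential obstacle is measure-theoretic. If $ X $ is not second countable, the product $ \sigma $-algebra may be smaller than the Borel $ \sigma $-algebra of $ X \times X $, so the product measure and the pushforward must be interpreted carefully, for instance as Radon measures. For the cases of interest, $ \mathbb{ T } $ and $ \mathbb{ R } $, the spaces are second countable and this issue does not arise, so I would state the argument in that setting and note the general case only as a remark.
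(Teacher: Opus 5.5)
Your proposal is correct and follows the same route as the paper. Both arguments use the change of variables for the pushforward, the homomorphism identity $\langle x_1+x_2,u\rangle\langle x_1-x_2,v\rangle=\langle x_1,u+v\rangle\langle x_2,u-v\rangle$ to factor the integrand, and Fubini to split the integral. The paper does this in a single displayed computation and leaves implicit the measure-theoretic justifications you supply (extension to bounded Borel functions, Fubini for complex measures, and the second-countability caveat).
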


\begin{proof}

Put
\begin{align}
    ( X_1 , X_2 ) 
    \coloneqq F ( x_1 , x_2 )
    = ( x_1 + x_2 , x_1 - x_2 ).
\end{align}
Then we have
\begin{align}
    ( F_* ( \mu_1 \times \mu_2 ) )^{ \hat{ } } ( u , v )
    & = \int_{ X \times X } u ( X_1 ) v ( X_2 ) dF_* ( \mu_1 \times \mu_2 ) ( X_1 , X_2 ) \\
    & = \int_{ X } \int_{ X } u ( x_1 + x_2 ) v ( x_1 - x_2 ) d \mu_1 ( x_1 ) d \mu_2 ( x_2 ) \\
    & = \int_{ X } ( u + v ) ( x_1 ) d \mu_1 ( x_1 ) \int_{ X } ( u - v ) ( x_2 ) d \mu_2 ( x_2 ) \\
    & = \hat{ \mu_1 } ( u + v ) \hat{ \mu_2 } ( u - v )
\end{align}
by the definition of the pushforward measure.
\end{proof}

Lemma \ref{lem:pushforward-Fourier} implies Proposition \ref{prop:Kac-Bernstein-Fourier} as follows:

\begin{proof}[Proof of Proposition \ref{prop:Kac-Bernstein-Fourier}]

The condition \ref{item:Kac-Bernstein-Fourier-independent} holds if and only if
\begin{align}
    ( F_* ( \mu_1 \times \mu_2 ) )^{ \hat{ } } ( u , v ) ( F_* ( \mu_1 \times \mu_2 ) )^{ \hat{ } } ( u' , v' )
    = ( F_* ( \mu_1 \times \mu_2 ) )^{ \hat{ } } ( u , v' ) ( F_* ( \mu_1 \times \mu_2 ) )^{ \hat{ } } ( u' , v )
\end{align}
for any $ u , u' , v , v' \in \hat{ X } $.
This condition is equivalent to \ref{item:Kac-Bernstein-Fourier-equation} by Lemma \ref{lem:pushforward-Fourier}.
\end{proof}

\subsection{Proof of Corollary \ref{cor:complex-T}}
\label{subsec:complex-T}

In this subsection, we prove Corollary \ref{cor:complex-T}.
First, we show the following lemma:

\begin{lemma}
\label{lem:key-complex-T}

Let $ c_2 \in \mathbb{ C } $ and $ u \in \mathbb{ Z } $.

\begin{enumerate}
    \item \label{item:key-complex-T-hyperbolic}

One has $ \cosh ( c_2 ) + \sinh ( c_2 ) ( - 1 )^u = e^{ c_2 ( - 1 )^u } $.

\item \label{item:key-complex-T-scaling}

Let $ k = 1 , 3 , 5 , \cdots $ and $ \theta \in \mathbb{ T } $.
Then
\begin{align}
    \int_{ \mathbb{ T } } e^{ - i u x } d \rho ( k x + \theta )
    =
    \left\{
    \begin{aligned}
       & e^{ i u \theta / k } \hat{ \rho } \left( \frac{ u }{ k } \right)  & & \text{ if $ u \in k \mathbb{ Z } $} \\
       & 0 & & \text{ if $ u \in \mathbb{ Z } \setminus k \mathbb{ Z } $}
    \end{aligned}
    \right. \label{eq:key-complex-T-scaling-conclude}
\end{align}
holds for any complex Borel measure $ \rho $ on $ \mathbb{ T } $.

\item \label{item:key-complex-T-main}

Let $ k $ and $ \rho $ be as in \ref{item:key-complex-T-scaling}, and $ a \in \mathbb{ Z } $ and $ c_4 \in \mathbb{ C } $.
Then
\begin{align}
    \mu ( x ) \coloneqq e^{ i a x + c_4 } ( \cosh ( c_2 ) \rho ( k x ) + \sinh ( c_2 ) \rho ( k x + \pi ) ) \label{eq:key-complex-T-main-mu-definition}
\end{align}
satisfies
\begin{align}
    \hat{ \mu } ( u )
    =
  \left\{
  \begin{aligned}
    & e^{ c_2 ( - 1 )^{ u - a } + c_4 } \hat{ \rho } \left( \frac{ u - a }{ k } \right) & & \text{ if $ u \in a + k \mathbb{ Z } $} \\
    & 0 & & \text{ if $ u \notin a + k \mathbb{ Z } $ }
  \end{aligned}
  \right. . \label{eq:key-complex-T-main-conclude}
\end{align}
    
\end{enumerate}

\end{lemma}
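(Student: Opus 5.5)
Each part of Lemma \ref{lem:key-complex-T} is checked directly, and part \ref{item:key-complex-T-main} is assembled from the first two.

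For \ref{item:key-complex-T-hyperbolic}, split on the parity of $ u $. If $ u $ is even, the left side is $ \cosh ( c_2 ) + \sinh ( c_2 ) = e^{ c_2 } = e^{ c_2 ( - 1 )^u } $. If $ u $ is odd, it is $ \cosh ( c_2 ) - \sinh ( c_2 ) = e^{ - c_2 } = e^{ c_2 ( - 1 )^u } $. Both follow from the definitions of $ \cosh $ and $ \sinh $.

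For \ref{item:key-complex-T-scaling}, apply the definition \eqref{eq:complex-T-boundary-scaling} with $ \phi ( x ) = e^{ - i u x } $. This gives
\begin{align}
  \int_{ \mathbb{ T } } e^{ - i u x } d \rho ( k x + \theta ) = \frac{ 1 }{ k } \int_{ \mathbb{ T } } \sum_{ x \colon k x + \theta = y } e^{ - i u x } d \rho ( y ).
\end{align}
For fixed $ y $, the solutions are $ x = x_0 + 2 \pi l / k $ for $ l = 0 , \ldots , k - 1 $, where $ x_0 = ( y - \theta ) / k $ is any fixed lift. The inner sum is then $ e^{ - i u x_0 } \sum_{ l } e^{ - 2 \pi i u l / k } $. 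This geometric sum equals $ k $ if $ u \in k \mathbb{ Z } $ and $ 0 $ otherwise.

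In the case $ u = k m $, the summand is $ e^{ - i m ( y - \theta ) } $, which is well defined on $ \mathbb{ T } $ because $ m \in \mathbb{ Z } $. Integrating against $ \rho $ yields $ e^{ i m \theta } \hat{ \rho } ( m ) = e^{ i u \theta / k } \hat{ \rho } ( u / k ) $. The only point needing care is that the sum over preimages is independent of the chosen lift of $ y - \theta $, and the integrality of $ m $ settles this.

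For \ref{item:key-complex-T-main}, first note that multiplying a measure by $ e^{ i a x } $ shifts the transform: $ \widehat{ e^{ i a x } \nu } ( u ) = \hat{ \nu } ( u - a ) $. So it suffices to compute the transform of $ \cosh ( c_2 ) \rho ( k x ) + \sinh ( c_2 ) \rho ( k x + \pi ) $ at $ w = u - a $.

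By \ref{item:key-complex-T-scaling}, both terms vanish unless $ w \in k \mathbb{ Z } $. When $ w \in k \mathbb{ Z } $, apply \ref{item:key-complex-T-scaling} with $ \theta = 0 $ and with $ \theta = \pi $. The transform is then
\begin{align}
  \bigl( \cosh ( c_2 ) + \sinh ( c_2 ) e^{ i w \pi / k } \bigr) \hat{ \rho } ( w / k ).
\end{align}
Here $ w / k $ is an integer, and since $ k $ is odd, $ e^{ i \pi w / k } = ( - 1 )^{ w / k } = ( - 1 )^{ w } $. Part \ref{item:key-complex-T-hyperbolic} turns the prefactor into $ e^{ c_2 ( - 1 )^{ u - a } } $. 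Multiplying by $ e^{ c_4 } $ gives \eqref{eq:key-complex-T-main-conclude}.

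The main obstacle is the parity identity $ ( - 1 )^{ w / k } = ( - 1 )^w $. It uses the oddness of $ k $, and it is exactly what makes the theta-type combinations in Corollary \ref{cor:complex-T} correspond to the $ c_2 ( - 1 )^u $ term of Theorem \ref{thm:Z-characterize}. The lift-independence check in \ref{item:key-complex-T-scaling} is the other small technical point.
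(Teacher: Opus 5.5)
Your proposal is correct and follows essentially the same route as the paper. Part (1) comes from the definitions of $\cosh$ and $\sinh$ (you split on the parity of $u$, while the paper writes one combined identity). Part (2) sums $e^{-iux}$ over the $k$ preimages of $y$, and part (3) factors out $e^{iax}$, applies (2) with $\theta = 0$ and $\theta = \pi$, uses the oddness of $k$ to get $e^{i\pi(u-a)/k} = (-1)^{u-a}$, and then invokes (1).
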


\begin{proof}

\begin{enumerate}
    \item 
    We obtain
\begin{align}
    \cosh ( c_2 ) + \sinh ( c_2 ) ( - 1 )^u
    & = \frac{ e^{ c_2 } + e^{ - c_2 } }{ 2 } + \frac{ ( e^{ c_2 } - e^{ - c_2 } ) ( - 1 )^u }{ 2 } \\
    & = \frac{ 1 }{ 2 } ( ( 1 + ( - 1 )^u ) e^{ c_2 } + ( 1 - ( - 1 )^u ) e^{ - c_2 } ) \\
    & = e^{ c_2 ( - 1 )^u }
\end{align}
by the definition of the hyperbolic functions.

\item
We have
\begin{align}
    \int_{ \mathbb{ T } } e^{ - i u x } d \rho ( k x + \theta )
    = \frac{ 1 }{ k } \int_{ \mathbb{ T } } \sum_{ x \in \mathbb{ T }, y = k x + \theta } e^{ - i u x } d \rho ( y ) \label{eq:measure-scaling-Fourier}
\end{align}
by \eqref{eq:complex-T-boundary-scaling}.
Then
\begin{align}
    \frac{ 1 }{ k } \sum_{ x \in \mathbb{ T }, y = k x + \theta } e^{ - i u x }
    = \left\{ 
    \begin{aligned}
        & e^{ - i u ( y - \theta ) / k } & & \text{ if $ u \in k \mathbb{ Z } $ } \\
        & 0 & & \text{ if $ u \in \mathbb{ Z } \setminus k \mathbb{ Z } $ }
    \end{aligned}
    \right.
\end{align}
holds and hence we have \eqref{eq:key-complex-T-scaling-conclude} for any $ u \in \mathbb{ Z } \setminus k \mathbb{ Z } $.
When $ u \in k \mathbb{ Z } $, we have
\begin{align}
   \frac{ 1 }{ k } \int_{ \mathbb{ T } } \sum_{ x \in \mathbb{ T }, y = k x + \theta } e^{ - i u x } d \rho ( y )
   & = \int_{ \mathbb{ T } } e^{ - i u ( y - \theta ) / k } d \rho ( y ) \\
   & = e^{ i u \theta / k } \int_{ \mathbb{ T } } e^{ - i u y / k } d \rho ( y ) \\
   & = e^{ i u \theta / k } \hat{ \rho } \left( \frac{ u }{ k } \right) \label{eq:measure-scaling-conclude}
\end{align}
and hence \eqref{eq:key-complex-T-scaling-conclude} follows from \eqref{eq:measure-scaling-Fourier} and \eqref{eq:measure-scaling-conclude}.

\item

We have
\begin{align}
    \hat{ \mu } ( u )
    & = \int_{ \mathbb{ T } } e^{ - i u x } d \mu ( x ) \\
    & = e^{ c_4 } \left( \cosh ( c_2 ) \int_{ \mathbb{ T } } e^{ - i x ( u - a ) } d \rho ( k x ) + \sinh ( c_2 ) \int_{ \mathbb{ T } } e^{ - i x ( u - a ) } d \rho ( k x + \pi ) \right) \label{eq:mu-Fourier-transform}
\end{align}
by \eqref{eq:key-complex-T-main-mu-definition}.
Thus, when $ u \notin a + k \mathbb{ Z } $, we obtain $ \hat{ \mu } ( u ) = 0 $ by \ref{item:key-complex-T-scaling}.

When $ u \in a + k \mathbb{ Z } $, we have
\begin{align}
    \int_{ \mathbb{ T } } e^{ - i x ( u - a ) } d \rho ( k x )
    & = \hat{ \rho } \left( \frac{ u - a }{ k } \right), &
    \int_{ \mathbb{ T } } e^{ - i x ( u - a ) } d \rho ( k x + \pi )
    = e^{ i ( u - a ) \pi / k } \hat{ \rho } \left( \frac{ u - a }{ k } \right)
\end{align}
by \ref{item:key-complex-T-scaling}.
Then $ e^{ i ( u - a ) \pi / k } = ( - 1 )^{ u - a } $ holds by $ k \in 1 + 2 \mathbb{ Z } $ and hence we have
\begin{align}
    & \phantom{ { } = { } }\cosh ( c_2 ) \int_{ \mathbb{ T } } e^{ - i x ( u - a ) } d \rho ( k x ) + \sinh ( c_2 ) \int_{ \mathbb{ T } } e^{ - i x ( u - a ) } d \rho ( k x + \pi ) \\
    & = ( \cosh ( c_2 ) + ( - 1 )^{ u - a } \sinh ( c_2 ) ) \hat{ \rho } \left( \frac{ u - a }{ k } \right). \label{eq:key-complex-T-scaling-apply}
\end{align}
Since $ \cosh ( c_2 ) + ( - 1 )^{ u - a } \sinh ( c_2 ) = e^{ c_2 ( - 1 )^{ u - a } } $ holds by \ref{item:key-complex-T-hyperbolic}, we obtain \eqref{eq:key-complex-T-main-conclude} by \eqref{eq:mu-Fourier-transform} and \eqref{eq:key-complex-T-scaling-apply}.
\qedhere
\end{enumerate}
    
\end{proof}

By Theorem \ref{thm:Z-characterize}, Proposition \ref{prop:Kac-Bernstein-Fourier}, and Lemma \ref{lem:key-complex-T}, we obtain Corollary \ref{cor:complex-T} as follows:

\begin{proof}[Proof of Corollary \ref{cor:complex-T}]

For any complex Borel measure $ \mu $ on $ \mathbb{ T } $, the conditions $ \mu \in M_{ + } $ and $ \mu \in M_{ - } $ are equivalent to $ \supp \hat{ \mu } \subset 2 \mathbb{ Z } $ and $ \supp \hat{ \mu } \subset 1 + 2 \mathbb{ Z } $, respectively.
Thus, the conditions of Theorem \ref{thm:Z-characterize} and Corollary \ref{cor:complex-T} correspond as in Table \ref{tab:Z-characterize-complex-T-correspond} by Proposition \ref{prop:Kac-Bernstein-Fourier} except for the condition \ref{item:Z-characterize-Gauss} of Theorem \ref{thm:Z-characterize}.
Thus, it suffices to show that either \ref{item:complex-T-Gauss} or \ref{item:complex-T-boundary} of Corollary \ref{cor:complex-T} holds when \ref{item:Z-characterize-Gauss} in Theorem \ref{thm:Z-characterize} holds.
In addition, it suffices to show it when $ ( a_1 , a_2 , k , c_2 , c_4 , c_6 ) = ( 0 , 0 , 1 , 0 , 0 , 0 ) $, that is, when
\begin{align}
    \hat{ \mu_1 } ( u ) = f_1 ( u ) = e^{ c_1 u^2 + c_3 u }, & &
    \hat{ \mu_2 } ( u ) = f_2 ( u ) = e^{ c_1 u^2 + c_5 u } \label{eq:Fourier-simple}
\end{align}
hold for any $ u \in \mathbb{ Z } $ by Lemma \ref{lem:key-complex-T}.
Since $ f_1 $ and $ f_2 $ are bounded by \eqref{eq:Fourier-simple}, we have $ \re c_1 \leq 0 $.

\begin{enumerate}[label=(Case \arabic*)]
    \item 
    
    When $ \re c_1 < 0 $:

    Since
\begin{align}
    \mu_1 = \vartheta \left( \frac{ x - i c_3 }{ 2 \pi } , \frac{ c_1 }{ \pi i } \right) dm_{ \mathbb{ T } } , & &
    \mu_2 = \vartheta \left( \frac{ x - i c_5 }{ 2 \pi } , \frac{ c_1 }{ \pi i } \right) dm_{ \mathbb{ T } }
\end{align}
satisfy \eqref{eq:Fourier-simple}, the condition \ref{item:complex-T-Gauss} of Corollary \ref{cor:complex-T} holds.

    \item 

    When $ \re c_1 = 0 $:

    We define
    \begin{align}
    \theta_1 \coloneqq \frac{ c_3 }{ i } , & & \theta_2 \coloneqq \frac{ c_5 }{ i }
\end{align}
and they are real since $ f_1 $ and $ f_2 $ are bounded.
Let $\rho ( x ) \coloneqq \mu_1 ( x - \theta_1 ) $.
The number $ C \coloneqq c_1 / i $ is real by $ \re c_1 = 0 $, and $ \hat{ \rho } ( u ) = e^{ i C u^2 } $ holds.
Similarly, the Fourier--Stieltjes transform of $ \mu_2 ( x - \theta_2 ) $ is $ e^{ i C u^2 } $ and hence
\begin{align}
    \mu_1 ( x ) = \rho ( x + \theta_1 ), & & 
    \mu_2 ( x ) = \rho ( x + \theta_2 )
\end{align}
hold.
Thus, we obtain \ref{item:complex-T-boundary} of Corollary \ref{cor:complex-T}.
\qedhere
\end{enumerate}
    
\end{proof}

\subsection{Proof of Corollary \ref{cor:Kac-Bernstein-complex-R}}
\label{subsec:Kac-Bernstein-complex-R}

In this subsection, we prove Corollary \ref{cor:Kac-Bernstein-complex-R} by using Corollary \ref{cor:R-characterize} and Proposition \ref{prop:Kac-Bernstein-Fourier}.

\begin{proof}[Proof of Corollary \ref{cor:Kac-Bernstein-complex-R}]

We define the functions $ f_1 $ and $ f_2 $ by \eqref{eq:f1-f2-Fourier}.
Then we have \eqref{eq:complex-Kac-Bernstein} by Proposition \ref{prop:Kac-Bernstein-Fourier} $ \text{ \ref{item:Kac-Bernstein-Fourier-independent} } \Longrightarrow \text{ \ref{item:Kac-Bernstein-Fourier-equation} } $.
Thus, the functions $ f_1 $ and $ f_2 $ satisfy one of the conditions of Corollary \ref{cor:R-characterize}.
The conditions \ref{item:R-characterize-f1-zero} and \ref{item:R-characterize-f2-zero} of Corollary \ref{cor:R-characterize} correspond to \ref{item:Kac-Bernstein-complex-R-f1-zero} and \ref{item:Kac-Bernstein-complex-R-f2-zero} of Corollary \ref{cor:Kac-Bernstein-complex-R}, respectively.
Thus, we may assume the condition \ref{item:R-characterize-Gauss} of Corollary \ref{cor:R-characterize}.
Since $ f_1 $ and $ f_2 $ are bounded by \eqref{eq:f1-f2-Fourier}, we have $ \re d_1 \leq 0 $.

\begin{enumerate}[label=(Case \arabic*)]
\item 
When $ d_1 \neq 0 $:

Take $ c' \in \mathbb{ C } $ satisfying $ e^{ c' } = - \pi / d_1 $.
Then
\begin{align}
\mu_1(x)
& = \exp \left( \frac{ \pi^2 x^2 }{ d_1 } - \frac{ \pi i d_2 x }{ d_1 } + \frac{ c' }{ 2 } + d_3 - \frac{ d_2^2 }{ 4d_1 } \right) dx, \\
\mu_2 ( x )
& = \exp \left( \frac{ \pi^2 x^2 }{ d_1 } - \frac{ \pi i d_4 x }{ d_1 } + \frac{ c' }{ 2 } + d_5 - \frac{ d_4^2 }{ 4d_1 } \right) dx
\end{align}
hold by the Fourier inversion formula (note that these formulas are valid in the sense of Schwartz distributions even when $ \re d_1 = 0 $).
Since $ \mu_1 $ is a complex measure, we have $ \re ( \pi^2 / d_1 ) < 0 $.
Thus, the condition \ref{item:Kac-Bernstein-complex-R-Gauss} of Corollary \ref{cor:Kac-Bernstein-complex-R} holds.

\item
When $ d_1 = 0 $:

Since $ f_1 $ and $ f_2 $ are bounded, the values
\begin{align}
    t_1 \coloneqq \frac{ i d_2 }{ 2 \pi }, & &
    t_2 \coloneqq \frac{ i d_4 }{ 2 \pi }
\end{align}
are real.
Then we have
\begin{align}
    ( e^{ d_3 } \delta_{ t_1 } )\hat{} ( u ) = e^{ d_2 u + d_3 }, & &
    ( e^{ d_5 } \delta_{ t_2 } )\hat{} ( u ) = e^{ d_4 u + d_5 }
\end{align}
and hence 
\begin{align}
    \mu_1 = e^{ d_3 } \delta_{ t_1 }, & &
    \mu_2 = e^{ d_5 } \delta_{ t_2 }.
\end{align}
Thus, the condition \ref{item:Kac-Bernstein-complex-R-degenerate} of Corollary \ref{cor:Kac-Bernstein-complex-R} holds.
\qedhere
\end{enumerate}

\end{proof}

\subsection{Alternative proof of Corollary \ref{cor:Baryshnikov-Eisenberg-Stadje}}
\label{subsec:Baryshnikov-Eisenberg-Stadje}

In this subsection, we obtain an alternative proof of Corollary \ref{cor:Baryshnikov-Eisenberg-Stadje}, which was proved by Baryshnikov--Eisenberg--Stadje.

\begin{proof}[Proof of Corollary \ref{cor:Baryshnikov-Eisenberg-Stadje}]

Defining $ f_1 $ and $ f_2 $ as \eqref{eq:f1-f2-Fourier}, we have \eqref{eq:Kac-Bernstein-original} by Corollary \ref{cor:original-Kac-Bernstein-Fourier} $ \text{ \ref{item:original-Kac-Bernstein-Fourier-independent} } \Longrightarrow \text{ \ref{item:original-Kac-Bernstein-Fourier-equation} } $.
In addition, we also have \eqref{eq:probability-dual} and hence the equivalent conditions of Proposition \ref{prop:original-Kac-Bernstein} hold.
Thus, at least one of the conditions of Corollary \ref{cor:original-Kac-Bernstein-Z} holds.

The conditions \ref{item:original-Kac-Bernstein-Z-f1-one-point} and \ref{item:original-Kac-Bernstein-Z-f2-one-point} of Corollary \ref{cor:original-Kac-Bernstein-Z} correspond to \ref{item:Baryshnikov-Eisenberg-Stadje-f1-one-point} and \ref{item:Baryshnikov-Eisenberg-Stadje-f2-one-point} of Corollary \ref{cor:Baryshnikov-Eisenberg-Stadje}, respectively.
Since $ \mu_1 $ and $ \mu_2 $ are probability measures, their Fourier--Stieltjes transforms $ f_1 $ and $ f_2 $ satisfy \eqref{eq:Hermitian-nonvanish} by \eqref{eq:f1-f2-Fourier}.
Thus, the conditions \ref{item:original-Kac-Bernstein-Z-six-point-f1-symmetry} and \ref{item:original-Kac-Bernstein-Z-six-point-f2-symmetry} of Corollary \ref{cor:original-Kac-Bernstein-Z} do not hold.
Therefore, it suffices to show the conclusion when \ref{item:original-Kac-Bernstein-Z-Gauss} of Corollary \ref{cor:original-Kac-Bernstein-Z} holds.
In addition, it suffices to consider the case of $ k = 1 $.
Then there exist $ q_3 , q_6 \in \mathbb{ C } $ such that
\begin{align}
    f_1 ( 1 ) = e^{ q_3 }, & & f_2 ( 1 ) = e^{ q_6 }
\end{align}
by $ f_1 ( 1 ), f_2 ( 1 ) \neq 0 $.
Since
\begin{align}
    f_1 ( - 1 ) = \overline{ f_1 ( 1 ) } = \overline{ e^{ q_3 } } = e^{ \overline{ q_3 } } , & &
    f_2 ( - 1 ) = \overline{ f_2 ( 1 ) } = \overline{ e^{ q_6 } } = e^{ \overline{ q_6 } }
\end{align}
hold by \eqref{eq:Hermitian-nonvanish}, we can replace $ c_1 $, $ c_2 $, $ c_3 $, and $ c_5 $ with
\begin{align}
    \begin{pmatrix}
        c_1 \\
        c_2 \\
        c_3 \\
        c_5
    \end{pmatrix}
    = \frac{ 1 }{ 8 }
    \begin{pmatrix}
        2 & 2 & 2 & 2 \\
        - 1 & - 1 & 1 & 1 \\
        - 4 & 4 & 0 & 0 \\
        0 & 0 & - 4 & 4
    \end{pmatrix}
    \begin{pmatrix}
        \overline{ q_3 } \\
        q_3 \\
        \overline{ q_6 } \\
        q_6
    \end{pmatrix}
    = 
    \begin{pmatrix}
        \re ( q_3 + q_6 ) / 2 \\
        \re ( q_6 - q_3 ) / 4 \\
        i \im q_3 \\
        i \im q_6
    \end{pmatrix}
    \label{eq:Baryshnikov-Eisenberg-Stadje-proof-const}
\end{align}
by Lemma \ref{lem:supp-all} $ \text{ \ref{item:supp-all-Kac-Bernstein} } \Longrightarrow \text{ \ref{item:supp-all-Gauss} } $.
Then $ c_1 , c_2 \in \mathbb{ R } $ and
\begin{align}
\theta_1 \coloneqq \frac{ c_3 }{ i } = \im q_3 \in \mathbb{ R }, & &
\theta_2 \coloneqq \frac{ c_5 }{ i } = \im q_6 \in \mathbb{ R }
\end{align}
hold.
Since $ f_1 $ and $ f_2 $ are bounded by \eqref{eq:f1-f2-Fourier}, we have $ c_1 \leq 0 $ by \eqref{eq:original-Kac-Bernstein-Z-Gauss-represent}.

\begin{enumerate}[label=(Case \arabic*)]
\item 

When $ c_1 < 0 $:

It suffices to show \ref{item:Baryshnikov-Eisenberg-Stadje-Gauss} when $ c_2 = 0 $, that is, the equations \eqref{eq:Fourier-simple} hold.
We have $ r \coloneqq - c_1 / \pi > 0 $ by $ c_1 < 0 $.
Then
\begin{align}
    \mu_1 \coloneqq \vartheta \left( \frac{ x + \theta_1 }{ 2 \pi } , i r \right) dm_{ \mathbb{ T } } ( x ) , & &
    \mu_2 \coloneqq \vartheta \left( \frac{ x + \theta_2 }{ 2 \pi } , i r \right) dm_{ \mathbb{ T } } ( x )
\end{align}
satisfy \eqref{eq:Fourier-simple} and hence we obtain \ref{item:Baryshnikov-Eisenberg-Stadje-Gauss}.

\item

When $ c_1 = 0 $:

We have
\begin{align}
    \mu_1 = e^{ - c_2 } ( \cosh ( c_2 ) \delta_{ \theta_1 } + \sinh ( c_2 ) \delta_{ \theta_1 + \pi } ) , & &
    \mu_2 = e^{ c_2 } ( \cosh ( c_2 ) \delta_{ \theta_2 } - \sinh ( c_2 ) \delta_{ \theta_2 + \pi } )
\end{align}
by Lemma \ref{lem:key-complex-T}.
Since $ \mu_1 $ and $ \mu_2 $ are probability measures, we have $ c_2 = 0 $.
Thus, we get
\begin{align}
    \mu_1 = \delta_{ \theta_1 } , & &
    \mu_2 = \delta_{ \theta_2 }
\end{align}
and hence \ref{item:Baryshnikov-Eisenberg-Stadje-Zk} holds.
\qedhere
\end{enumerate}
    
\end{proof}

\section*{Acknowledgment}
The author would like to thank Toshiyuki Kobayashi, Yuichiro Tanaka, and Toshihisa Kubo 
for their careful comments.

\printbibliography

\noindent
Takashi Satomi: 

\noindent
School of System Design and Technology, Department of Mathematics and Data Science, Tokyo Denki University, Adachi-ku, Tokyo 120-8551, Japan.

\noindent
RIKEN Interdisciplinary Theoretical and Mathematical Sciences (iTHEMS), Wako, Saitama 351-0198, Japan.

\noindent
E-mail: takashi.satomi@mail.dendai.ac.jp

\end{document}